\journal{arXiv}
\newtheorem{theorem}{Theorem}[section]
\newtheorem{lemma}[theorem]{Lemma}
\newtheorem{prop}[theorem]{Proposition}
\newtheorem{fact}[theorem]{Fact}
\newtheorem{cor}[theorem]{Corollary}
\newdefinition{defn}[theorem]{Definition}
\begin{document}

\begin{frontmatter}
\title{Accessible Proof of Standard Monomial Basis for Coordinatization\\ of Schubert Sets of Flags \footnote{To be contained in the author's doctor thesis written under the supervision of Robert A. Proctor.}}
\author{David C. Lax}
\address{Department of Mathematics, The University of North Carolina at Chapel Hill\\Phillips Hall CB\#3250, Chapel Hill, NC 27599, USA}
\ead{dclax@live.unc.edu}
\begin{abstract}
The main results of this paper are accessible with only basic linear algebra.
Given an increasing sequence of dimensions, a flag in a vector space is an increasing sequence of subspaces with those dimensions.
The set of all such flags (the flag manifold) can be projectively coordinatized using products of minors of a matrix.
These products are indexed by tableaux on a Young diagram.
A basis of ``standard monomials" for the vector space generated by such projective coordinates over the entire flag manifold has long been known.  
A Schubert variety is a subset of flags specified by a permutation.
Lakshmibai, Musili, and Seshadri gave a standard monomial basis for the smaller vector space generated by the projective coordinates restricted to a Schubert variety.
Reiner and Shimozono made this theory more explicit by giving a straightening algorithm for the products of the minors in terms of the right key of a Young tableau.
Since then, Willis introduced scanning tableaux as a more direct way to obtain right keys.
This paper uses scanning tableaux to give more-direct proofs of the spanning and the linear independence of the standard monomials.
In the appendix it is noted that this basis is a weight basis for the dual of a Demazure module for a Borel subgroup of $GL_n$.
This paper contains a complete proof that the characters of these modules (the key polynomials) can be expressed as the sums of the weights for the tableaux used to index the standard monomial bases.
\end{abstract}
\begin{keyword}
standard monomial \sep Schubert variety \sep Demazure module \sep key polynomial \sep scanning tableau 
\MSC[2010] 14M15 \sep 05E10 \sep 17B10 \sep 05E05 \sep 05E40
\end{keyword}
\end{frontmatter}

\section{Introduction}

The main results of this paper are accessible to anyone who knows basic linear algebra:  the Laplace expansion of a determinant is the most advanced linear algebra technique used.
Otherwise, the most sophisticated fact needed is that the application of a multivariate polynomial may be moved inside a limit.
Readers may replace our field $\mathbb{C}$ with any field of characteristic zero, such as $\mathbb{R}$.

Let $n \geq 2$ and $1 \leq k \leq n-1$.  
Fix $0< q_1 < q_2< \dots < q_k < n$ and let $Q$ denote the set $\{q_1, \dots, q_k\}$.
A $Q$-flag of $\mathbb{C}^n$ is a sequence of subspaces $V_1 \subset V_2 \subset \dots \subset V_k \subset \mathbb{C}^n$ such that dim($V_j$) $= q_j$ for $1\leq j \leq k$.
The set $\mathcal{F}\ell_Q$ of $Q$-flags has long been studied by geometers.
It is known as a \emph{flag manifold (for $GL_n$)}.
Given a fixed sequence of integers $\zeta_1 \geq \zeta_2 \geq \dots \geq \zeta_m$ with $\zeta_i \in Q$ for $1 \leq i \leq m$, one  can form projective coordinates for $\mathcal{F}\ell_Q$ as follows:
First, any flag can be represented with a sequence of $n$ column vectors of length $n$.
The juxtaposition of these vectors forms an $n \times n$ matrix $f$.
For each $1\leq i \leq m$, form a left-initial $\zeta_i \times \zeta_i$ minor of $f$ by selecting $\zeta_i$ of its $n$ rows.
We refer to a product of such minors as a ``monomial" for the given $\zeta_j$'s.
Let $N$ be the number of such possible monomials.
One can inefficiently coordinatize $\mathcal{F}\ell_Q$ in $\mathbb{P}(\mathbb{C}^N)$ by evaluating all of these monomials over the flag manifold.
The sequence $\zeta_1, \dots, \zeta_m$ can be viewed as the lengths of the columns of a Young diagram $\lambda$.
By the 1950s it was known how to use the semistandard Young tableaux on the diagram $\lambda$ to index an efficient subset of these coordinates;   this has been attributed to Young and to Hodge and Pedoe.
This subset is a basis of ``standard" monomials for the vector space generated by all monomials over the flag manifold.
One can group flags into subsets known as \emph{Schubert varieties} using a form of Gaussian elimination on their matrix representatives; these can be indexed by $n$-permutations.
For a given Schubert variety, the coordinatization by the set of monomials indexed by semistandard tableaux is inefficient.
Utilizing recent developments in tableau combinatorics, this paper gives a new derivation of a basis of standard monomials for the vector space generated by all monomials restricted to a Schubert variety. 

The most famous flag manifolds are the sets of $d$-dimensional subspaces of $\mathbb{C}^n$.
These are the cases $k:=1$ and $q_1:=d$ above and are known as the Grassmannians.
Here the basis result for Schubert varieties may be readily deduced once it is known for the entire Grassmannian.
The next-most studied flag manifold is the ``complete" flag manifold, which is the case $k:=n-1$ above.

It was not until the late 1970s that Lakshmibai, Musili, and Seshadri first gave \cite{LMS} a standard monomial basis for any  Schubert variety of a general flag manifold (for $GL_n$).
Their solution used sophisticated geometric methods and was expressed in the language of the representation theory of semisimple Lie groups.
In 1990, Lascoux and Sch\"utzenberger defined \cite{LS} the ``right key" of a semistandard tableau.
In 1997, Reiner and Shimozono used the notion of right key to give \cite{RS2} a new derivation of the standard monomial basis for any Schubert variety of the complete flag manifold.
They provided a ``straightening algorithm" for products of minors that expressed the monomial specified by a given tableau as a linear combination in the standard monomial basis.
In 2013, Willis defined \cite{Willis} the ``scanning tableau"  of a semistandard tableau and showed that it is the right key of Lascoux and Sch\"utzenberger.
The scanning tableau appears to be the simplest description of the right key.

We show how scanning tableaux can be used to improve the proofs of \cite{RS2} for the spanning and the linear independence of the standard monomials.
All aspects of our presentation  consider all Schubert varieties of all flag manifolds for $GL_n$, i.e. for any $1 \leq k \leq n-1$.
The statements of our basis theorem, Theorem~\ref{main}, and both its spanning and linear independence parts differ from the analogous statements in \cite{RS2}:
We do not limit ourselves to the $k=n-1$ complete flag case.
Here we use the scanning tableau to determine whether the monomial of a given tableau is a member of our standard basis for a given Schubert variety.
In that article, membership is determined by using a ``jeu de taquin" procedure to compute the right key of a tableau.
The use of scanning tableaux allows for a direct and widely accessible proof of this theorem which is entirely self-contained.
As a consequence of our basis theorem, we obtain a weighted tableaux summation expression, Corollary~\ref{character}, that is associated to the vector space at hand.
It is the ``Demazure polynomial" of \cite{PW}, or the ``key polynomial" of \cite{RS1} (which is given in terms of right keys).
The derivation of this character expression is also self-contained:
In particular, the original notion of right key is not needed.

Our spanning proof uses scanning tableaux to give a straightening algorithm in the spirit of \cite{RS2}.
The determinantal identity from \cite{Turnbull} used there is also used here; more details are given for its application to the projective coordinates of a Schubert variety.
Combinatorialists' interest in straightening algorithms goes back at least to \cite{DRS, DKR}.
Apart from motivation, the spanning proof does not need any mention of $Q$-flags or Schubert varieties.
All of the necessary definitions for the spanning theorem, Theorem~\ref{span 2}, make sense for matrices with entries from any commutative ring $R$.
The theorem statement itself makes sense over $R$ when ``spans" is replaced by ``generates as an $R$-module."
The proof presented in this paper is valid at that  level of generality.

Our linear independence proof follows the general inductive strategy used in \cite{LMS} and \cite{RS2}.
However, the simpler combinatorics of scanning tableaux allow those proofs to be simplified.
One simpler aspect is that now only single Schubert varieties need be considered in the induction, rather than the unions of Schubert varieties that arose in the earlier papers.
The statement of the linear independence theorem, Theorem~\ref{LI}, makes sense over any field.
The proof presented here is valid for any field of characteristic zero; we make this assumption to obtain a self-contained development.
The related proof in \cite{RS2} does not need characteristic zero since it refers to a standard fact concerning the closure of a ``Bruhat cell."
There it is assumed the base field is algebraically closed, but given \cite{Humph}, they actually do not need that assumption for this fact.
Hence the basis results in \cite{RS2} and here hold over any field.
See the appendix for details.

We need a number of well-known facts about Schubert varieties for our linear independence proof.
There are references for these facts at varying levels of sophistication for the Grassmannians \cite{Procesi, LB} or the complete flag manifold \cite{MS}.
However, we have not found a comprehensive source at any level of sophistication.
Nor have we found a combination of sources that are accessible to readers without advanced educations in pure mathematics.
So we have included elementary proofs of these standard facts for all flag manifolds for $GL_n$:
Sections \ref{SETUP}, \ref{DEFS}, \ref{PREF}, and \ref{MON} of this paper can serve as an accessible introduction to the subject.
The appendix provides an interface with the modern literature on flag manifolds and Schubert varieties.
Using this appendix, the reader can transition from this  paper to the reductive Lie group and representation theory contexts of references such as \cite{Procesi, LB}.
There we describe how the standard monomial basis provides a basis of global sections for a certain line bundle on a homogeneous space of $GL_n$. 
This is a weight basis for the dual of a Demazure module for a Borel subgroup of $GL_n$.  
For coordinatizing Schubert varieties, it is sufficient to consider Young diagrams with columns of length less than $n$.
Such diagrams would also suffice if one were interested only in realizing representations of $SL_n$.
But we allow our Young diagrams to have columns of length $n$ so that we can realize all of the irreducible polynomial representations of $GL_n$ in the appendix.

Combinatorial tools are introduced in Section~\ref{SETUP}.  
Section~\ref{DEFS} presents the definitions of flag varieties, Schubert varieties, and their projective coordinates.  
Our main theorem, Theorem~\ref{main}, is motivated and stated there.
Sections~\ref{SPAN1} and \ref{SPAN2} prove the spanning parts of Theorems \ref{tableaux} and \ref{main}.
Sections~\ref{PREF} and \ref{MON} present the facts needed to projectively coordinatize Schubert varieties.  
Section~\ref{LIND} proves the linear independence parts of Theorems \ref{tableaux} and \ref{main}.
Section~\ref{CHAR} presents the Demazure polynomial summation.
Section~\ref{APP} is the appendix of contemporary terminology.

\section{Combinatorial tools}\label{SETUP}

The needed combinatorial tools are ``$Q$-chains", which we use to index Schubert varieties, and ``tabloids", which we use to index some projective coordinates for flag manifolds.

Fix $n \geq 2$ and a nonempty subset $Q\subseteq\{1, 2, \dots, n-1\}$ throughout the paper.
Set $k:=|Q|$ and index the elements of $Q$ in increasing order: $1\leq q_1 < q_2 < \dots <q_k < n =: q_{k+1}$.
Define $[n]:=\{1,2, \dots, n\}$.
A \emph{$Q$-chain} is a sequence of subsets $ P_1 \subset P_2 \subset \cdots \subset  P_k \subseteq [n]$ such that $|P_j|=q_j$ for $1 \leq j \leq k$.

An \emph{$n$-partition} is an $n$-tuple $\lambda=(\lambda_1, \lambda_2, \dots, \lambda_n)$ satisfying $\lambda_1 \geq \lambda_2 \geq \dots \geq \lambda_n \geq 0:=\lambda_{n+1}$.
Fix an $n$-partition $\lambda$.
The \emph{shape} of $\lambda$, also denoted $\lambda$, is an array of $n$ rows of boxes that has $\lambda_r$ boxes in row $r$.
The column lengths of the shape $\lambda$ are denoted $n \geq \zeta_1 \geq \dots \geq \zeta_{\lambda_1}$.
Denote the set of distinct column lengths of $\lambda$ that are less than $n$ by $Q(\lambda)$.
Refer to a location in $\lambda$ with column index $1 \leq c \leq \lambda_1$ and row index $1 \leq r \leq \zeta_c$ by $(r,c)$.
Sets of locations in $\lambda$ are called  \emph{regions}.
A \emph{tabloid} $T$ of shape $\lambda$ is a filling of the shape $\lambda$ with values from $[n]$ such that the values strictly increase down each column.
The value of $T$ at location $(r,c)$ is denoted $T(r,c)$.
Partially order the tabloids of shape $\lambda$ by defining $T \preceq U$ if $T(r,c) \leq U(r,c)$ for all locations $(r,c)\in \lambda$.
We use the term \emph{column tabloid} to refer to a tabloid of shape $1^d$ for some \emph{length} $d \leq n$.
Given a subset $P \subseteq [n]$, define $Y(P)$ to be the column tabloid of length $|P|$ filled with the values of $P$ in increasing order.
There is a unique column tabloid of length $n$, namely $Y([n])$.
A \emph{(semistandard Young) tableau} is a tabloid whose values weakly increase across each row.
In Theorem~\ref{tableaux} we use tableaux to index the standard monomial basis for a flag manifold.

Given a $Q$-chain $\pi=(P_1, \dots, P_k)$, its \emph{key} $Y(\pi)$ is the tabloid whose shape has one column each of the lengths $q_k, q_{k-1}, \dots, q_1$ and which is obtained by juxtaposing the columns $Y(P_k), Y(P_{k-1}), \dots, Y(P_1)$.
It can be seen that $Y(\pi)$ is a tableau.
The \emph{Bruhat order} on $Q$-chains is the following partial order:
For two $Q$-chains $\rho$ and $\pi$, define $\rho \preceq \pi$ if $Y(\rho)\preceq Y(\pi)$.
The \emph{$Q$-carrels} for an $n$-tuple are the following $k+1$ sets of positions:  the first $q_1$ positions, the next $q_2-q_1$ positions, and so on through the last $n-q_k$ positions.
To each $Q$-chain $\pi$, we associate the permutation $\overline \pi$ of $[n]$: 
In $n$-tuple form, the $Q$-carrels of $\overline \pi$ respectively display the elements of the $k+1$ sets $P_1, P_2 \setminus P_1, \dots, P_k \setminus P_{k-1}, [n]\setminus P_k$, with the elements of each set listed in increasing order.
A \emph{$Q$-permutation} is a permutation of $[n]$ in $n$-tuple form such that the values within each $Q$-carrel increase from left to right.
It is easy to see that the creation of $\overline \pi$ describes a bijection from the set of $Q$-chains to the set of $Q$-permutations.

For $1 \leq i < j \leq n$, define the \emph{reflection} $\sigma_{i j}$ to be the following operator on $Q$-chains:
Let $\pi=(P_1,\dots,P_k)$ be a $Q$-chain.
For $1 \leq \ell \leq k$, form the following sets:
If $i \in P_\ell$ and $j \not \in P_\ell$, set $P_\ell':= \left(P_\ell \setminus \{i\}\right) \cup \{j\}$.
If $j \in P_\ell$ and $i \not \in P_\ell$, set $P_\ell':=\left(P_\ell \setminus \{j\}\right) \cup \{i\}$.
Otherwise, set $ P_\ell ':=P_\ell$.
It can be seen that $P_1' \subset \cdots \subset P_k'$; this is the $Q$-chain $\sigma_{ij}\pi$.
If there exists $1 \leq \ell \leq k$ such that $j \in P_\ell$ and $i \not \in P_\ell$, then $Y(\sigma_{ij}\pi)$ is produced from $Y(\pi)$ by decreasing some values from $j$ to $i$ (and sorting the resulting columns), so $\sigma_{ij}\pi \prec \pi$.

The following lemma says that we can find a reflection to step down in the Bruhat order between two $Q$-chains:
\begin{lemma} \label{stepdown}
Let $\rho, \pi$ be $Q$-chains.  
If $\rho \prec \pi$, then there exists $1 \leq i< j \leq n$ such that $\rho \preceq \sigma_{ij}\pi \prec \pi$.
\end{lemma}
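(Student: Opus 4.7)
Since $\rho \prec \pi$ means $Y(\rho) \preceq Y(\pi)$ componentwise with at least one strict inequality, some column of the key distinguishes $\rho$ from $\pi$. I would start by letting $\ell_0$ be the largest index with $R_{\ell_0}\neq P_{\ell_0}$, so that $R_m=P_m$ for every $m>\ell_0$. Within column $\ell_0$, the equal-size sets $R_{\ell_0}$ and $P_{\ell_0}$ have sorted symmetric differences $R_{\ell_0}\setminus P_{\ell_0}=\{r^*_1<\cdots<r^*_s\}$ and $P_{\ell_0}\setminus R_{\ell_0}=\{p^*_1<\cdots<p^*_s\}$; a standard Hall-type consequence of $Y(R_{\ell_0})\preceq Y(P_{\ell_0})$ is that $r^*_b\le p^*_b$ for every $b$, with strictness forced by disjointness of the two symmetric differences.

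Taking $i:=r^*_s$ and $j:=p^*_s$ gives $i<j$ together with $i\notin P_{\ell_0}$ and $j\in P_{\ell_0}$, so the remark preceding the lemma immediately yields $\sigma_{ij}\pi\prec\pi$. To establish $\rho\preceq\sigma_{ij}\pi$, I would verify $Y(R_m)\preceq Y(P'_m)$ one column at a time, where $P'_m$ denotes the $m$-th component of $\sigma_{ij}\pi$. For $m>\ell_0$, nestedness gives $i\in R_{\ell_0}\subseteq R_m=P_m$ and $j\in P_{\ell_0}\subseteq P_m$, so $\sigma_{ij}$ fixes $P_m$ and the inequality is inherited. For $m=\ell_0$ itself, sorting $P'_{\ell_0}=(P_{\ell_0}\setminus\{j\})\cup\{i\}$ reduces the comparison with $Y(R_{\ell_0})$ to the inequalities $r^*_b\le p^*_b$ for $b<s$ together with an equality at position $s$.

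The main obstacle I anticipate is the case $m<\ell_0$: the reflection can nontrivially alter $P_m$ when $j\in P_m$ and $i\notin P_m$, and it is not automatic that $Y(R_m)\preceq Y(P'_m)$ for the above choice. When this fails, the construction needs to be adapted---for instance, by switching to the smallest $\ell_0$ with $R_{\ell_0}\neq P_{\ell_0}$ (which symmetrically forces $R_m=P_m$ for $m<\ell_0$ and pushes all potential difficulty to the columns above $\ell_0$), or by picking $i$ inside $R_{\ell_0}\cap P_{\ell_0}$ so that the Bruhat decrease is effected through some column $m<\ell_0$ whose compatibility with $R_m$ is transparent. Existence of at least one workable reflection ultimately relies on the combinatorial structure of the deficiency function $(\ell,v)\mapsto |R_\ell\cap[1,v]| - |P_\ell\cap[1,v]|$, which is nonnegative everywhere and strictly positive somewhere, thereby pinpointing a rectangle of indices along which a valid $(i,j)$ must act.
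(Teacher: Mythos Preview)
Your sketch is honest about being incomplete, and the obstacle you flag for $m<\ell_0$ is real: with $Q=\{1,2\}$, $\rho=(\{2\},\{1,2\})$, $\pi=(\{3\},\{2,3\})$ your recipe gives $\ell_0=2$, $i=1$, $j=3$, and then $\sigma_{13}\pi=(\{1\},\{1,2\})$ fails $\rho\preceq\sigma_{13}\pi$ already at level $m=1$ since $Y(R_1)=(2)\not\preceq(1)=Y(P'_1)$. So the adaptation you only gesture at is genuinely required, and nothing in the proposal supplies it.

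The paper takes exactly the alternative you suggest --- the \emph{smallest} index $h$ with $R_h\ne P_h$ (the rightmost column of the key) --- but pairs it with the \emph{minimal} $i\in R_h\setminus P_h$ and $j\in P_h\setminus R_h$ rather than the maxima. This disposes of the columns $m<h$ outright, since there $R_m=P_m$ and neither $i$ nor $j$ lies in $P_m\subset P_h$; for $m\ge h$ one always has $j\in P_h\subseteq P_m$, so only the replacement $P'_m=(P_m\setminus\{j\})\cup\{i\}$ can occur, and the paper argues row by row in $Y(P'_m)$ by claiming that $R_m$ contains every value of $P'_m$ below $j$. You should scrutinize that last claim: for $m>h$ an element of $P_m\setminus P_h$ below $j$ need not lie in $R_m$. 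Indeed, with $Q=\{1,3\}$, $\rho=(\{1\},\{1,3,4\})$, $\pi=(\{3\},\{2,3,5\})$ the paper's choice gives $h=1$, $i=1$, $j=3$, $P'_2=\{1,2,5\}$, yet $Y(R_2)=(1,3,4)\not\preceq(1,2,5)=Y(P'_2)$. So neither your initial recipe nor the paper's literal one is universally valid; a correct argument needs a more careful selection of $(i,j)$ --- your deficiency-function idea is a viable route, but it has to be executed in full rather than invoked.
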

\begin{proof}
Write $\rho=(R_1,\dots , R_k)$ and $\pi=(P_1 ,\dots,P_k)$.
Find the rightmost column where the keys $Y(\rho)$ and $Y(\pi)$ differ:  these columns are $Y(R_h)$ and $Y(P_h)$ respectively for some $1 \leq h \leq k$.
Find the minimal $i \in R_h \setminus P_h$ and the minimal $j \in P_h \setminus R_h$.
Since $Y(R_h) \prec Y(P_h)$, we have $i<j$.
Form the $Q$-chain $\sigma_{ij} \pi =(P_1', \dots ,P_k')$.
By the above remark $\sigma_{ij}\pi \prec \pi$.

We verify that $\rho \preceq \sigma_{ij}\pi$: 
For the values of $1\leq \ell \leq k$ such that $P_\ell' = P_\ell $, we have $Y(R_\ell) \preceq Y(P_\ell)=Y( P_\ell')$.
For the other values of $\ell$, we have $P_\ell' = (P_\ell \setminus \{j\}) \cup \{i\}$.  
Let $1 \leq p \leq q_\ell $ denote the row index of the value $j$ in $Y(P_\ell)$.
In the rows below row $p$ the value in $Y(P_\ell')$ is the same as the value in $Y( P_\ell)$ since these are the values of $ P_\ell'$ and $P_\ell$ which are greater than $j$.
So here the value in $Y(R_\ell)$ is at most the value in $Y( P_\ell')$.
For rows at and above row $p$, the value in $Y(R_\ell)$ is at most the value in $Y(P_\ell')$ since $R_\ell$ contains all of the $p$ values of $P_\ell'$ which are less than $j$.
\end{proof}

Fix an $n$-partition $\lambda$ with $Q(\lambda) \subseteq Q$.
For $1\leq \ell \leq k$, the number of columns of length $q_\ell$ in $\lambda$ is $\lambda_{q_\ell}-\lambda_{q_{\ell+1}}$.
The number of columns of length $n$ in $\lambda$ is $\lambda_n$.
Given a $Q$-chain $\pi=(P_1, \dots, P_k)$, its \emph{$\lambda$-key} is the tableau $Y_\lambda(\pi)$ of shape $\lambda$ obtained by juxtaposing $\lambda_n$ copies of the column $Y([n])$, $\lambda_{q_k}-\lambda_{q_{k+1}}$ copies of the column $Y(P_k)$, $\lambda_{q_{k-1}}-\lambda_{q_k}$ copies of the column $Y(P_{k-1})$, $\dots$ and $\lambda_{q_1}-\lambda_{q_2}$ copies of the column $Y(P_1)$.

\begin{lemma}\label{order}
Let $\rho, \pi$ be $Q$-chains.
If $\rho \preceq \pi$ , then $Y_\lambda(\rho) \preceq Y_\lambda(\pi)$.
When $Q(\lambda)=Q$ the converse holds:  if $Y_\lambda(\rho) \preceq Y_\lambda(\pi)$, then $\rho \preceq \pi$.
\end{lemma}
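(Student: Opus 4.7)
The plan is to reduce both directions of the lemma to a column-by-column comparison, using the fact that each column of $Y_\lambda(\pi)$ is either $Y([n])$ or a copy of some $Y(P_\ell)$, and the corresponding column of $Y_\lambda(\rho)$ is either $Y([n])$ or the corresponding $Y(R_\ell)$.

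First I would unpack the definition of the Bruhat order at the level of individual columns. Writing $\rho=(R_1,\dots,R_k)$ and $\pi=(P_1,\dots,P_k)$, the tabloid $Y(\pi)$ consists of the columns $Y(P_k), Y(P_{k-1}), \dots, Y(P_1)$ juxtaposed, and similarly for $Y(\rho)$. Since column $h$ of $Y(\pi)$ has length $q_{k+1-h}$, which strictly exceeds the length of column $h+1$, the entry-wise comparison $Y(\rho) \preceq Y(\pi)$ is equivalent to requiring $Y(R_\ell) \preceq Y(P_\ell)$ in each length-$q_\ell$ column, for every $1 \leq \ell \leq k$.

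For the forward direction, assume $\rho \preceq \pi$, so $Y(R_\ell) \preceq Y(P_\ell)$ for all $\ell$. In $Y_\lambda(\pi)$ and $Y_\lambda(\rho)$, the leftmost $\lambda_n$ columns are both copies of $Y([n])$, so they agree. The next $\lambda_{q_k}-\lambda_{q_{k+1}}$ columns are copies of $Y(P_k)$ in $Y_\lambda(\pi)$ and of $Y(R_k)$ in $Y_\lambda(\rho)$, and more generally the columns of length $q_\ell$ in the two $\lambda$-keys are copies of $Y(P_\ell)$ and $Y(R_\ell)$ respectively. Entry-wise comparison in each block is immediate from $Y(R_\ell) \preceq Y(P_\ell)$, so $Y_\lambda(\rho) \preceq Y_\lambda(\pi)$.

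For the converse, assume $Q(\lambda)=Q$ and $Y_\lambda(\rho) \preceq Y_\lambda(\pi)$. The hypothesis $Q(\lambda)=Q$ means that every $q_\ell \in Q$ is a column length of $\lambda$, i.e.\ $\lambda_{q_\ell}-\lambda_{q_{\ell+1}}>0$ for each $1 \leq \ell \leq k$. Fixing $\ell$, pick any one column of length $q_\ell$ in $\lambda$; it is filled with $Y(P_\ell)$ in $Y_\lambda(\pi)$ and with $Y(R_\ell)$ in $Y_\lambda(\rho)$. The assumed entry-wise inequality at the locations of this column yields $Y(R_\ell) \preceq Y(P_\ell)$. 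Doing this for each $\ell$ reassembles the columns of $Y(\rho)$ and $Y(\pi)$ and gives $Y(\rho) \preceq Y(\pi)$, i.e.\ $\rho \preceq \pi$. The main point to watch is that without $Q(\lambda)=Q$, some $q_\ell$ might not appear as a column length of $\lambda$, so the comparison at positions involving $Y(P_\ell)$ and $Y(R_\ell)$ would be vacuous and the converse could fail; this is precisely why the hypothesis $Q(\lambda)=Q$ is needed and is the only subtle aspect of the argument.
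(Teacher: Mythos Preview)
Your proof is correct and follows essentially the same approach as the paper's: both arguments rest on the observation that every column of $Y_\lambda(\pi)$ of length less than $n$ is a copy of some column $Y(P_\ell)$ of $Y(\pi)$, and that when $Q(\lambda)=Q$ every column of $Y(\pi)$ conversely appears in $Y_\lambda(\pi)$. Your version simply spells out in more detail the column-by-column comparison that the paper compresses into two sentences.
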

\begin{proof}
Every column of length $n$ is $Y([n])$, and every column of $Y_\lambda(\pi)$ of length less than $n$ appears in $Y(\pi)$.
When $Q(\lambda)=Q$, every column of $Y(\pi)$ also appears in $Y_\lambda(\pi)$.
\end{proof}

We now describe the scanning algorithm of \cite{Willis}.
Fix a sequence $(b_1, b_2, b_3, \dots)$.
Define its \emph{earliest weakly increasing subsequence} (EWIS) to be the subsequence $(b_{i_1}, b_{i_2}, b_{i_3}, \dots)$, where $i_1=1$ and for $j>1$ the index $i_j$ is the smallest index such that $b_{i_j} \geq b_{i_{j-1}}$.
For any tableau $T$ of shape $\lambda$, construct its \emph{scanning tableau} $S(T)$ as follows:
Begin with an empty shape $\lambda$.
Form the sequence of the bottom-most values of the columns of $T$ from left to right.
Find the EWIS of this sequence.
When a value is added to the EWIS, mark its location in $T$.
The sequence of locations just marked is called a \emph{scanning path}.
Fill the lowest available location of the leftmost available column of $S(T)$ with the last member of the EWIS.
Iterate this process as if the marked locations are no longer part of $T$.
Using the row condition on the filling of $T$, it can be seen that at each stage the unmarked locations form the shape of some $n$-partition.
This implies that every location in $T$ is marked once the leftmost column of $S(T)$ has been filled.
To find the values of the next column of $S(T)$:
\begin{enumerate}
\item Ignore the leftmost column of $T$ and $\lambda$.
\item Remove the marks from the remaining locations.
\item Repeat the above process.
\end{enumerate}
Continue until the shape has been completely filled with values: this is the scanning tableau $S(T)$ of $T$.
For a location $(r,c)\in \lambda$, let $P(T;r,c)$ denote the scanning path found to fill location $(r,c)$ of $S(T)$.

We need four lemmas concerning the scanning tableau $S(T)$ of a tableau $T$ of shape $\lambda$.
Only the first is needed to prove Theorem~\ref{span 2}, the main spanning theorem.
The other three along with Lemmas~\ref{stepdown} and \ref{order} are used in Sections~\ref{MON} and \ref{LIND}.
\begin{lemma} \label{scan path}
 Let $1\leq c \leq \lambda_1$ and $1\leq r \leq \zeta_c - 1$.
For any location $(p , b)$ in the scanning path $P(T;r,c)$, there exists a location $(u, v)$ in the previous scanning path $P(T;r+1,c)$ such that $v \leq b$ and $T(u,v)>T(p,b)$.
\end{lemma}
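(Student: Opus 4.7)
The plan is to directly unpack the scanning algorithm and compare the ``bottom-most unmarked'' values at the two consecutive construction steps. Let $e_b$ denote the value of the bottom-most unmarked cell in column $b$ of $T$ immediately before $P(T;r+1,c)$ is constructed, and let $a_b$ denote the analogous value immediately before $P(T;r,c)$ is constructed. Let $E$ be the set of columns containing a location of $P(T;r+1,c)$; note $c \in E$. Because at each step of the algorithm the marked cells in every column form a contiguous block at the bottom, one has: if $b \in E$, then the bottom-most unmarked cell in column $b$ at step $r+1$ is the one marked during that step, so $a_b$ is the entry of $T$ immediately above it, and in particular $a_b < e_b$; if $b \notin E$, then column $b$ is untouched between the two steps and $a_b = e_b$.

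Given $(p,b) \in P(T;r,c)$, I split into two cases based on whether $b \in E$.

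Case~1: $b \in E$. Let $(u,b)$ be the location of $P(T;r+1,c)$ in column $b$. Because $(u,b)$ and $(p,b)$ are the bottom-most unmarked cells in column $b$ at their respective steps, and step $r+1$ precedes step $r$, we get $u = p+1$. Hence $v := b$ satisfies $v \leq b$ and $T(u,v) > T(p,b)$.

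Case~2: $b \notin E$. Since $c \in E$, we have $b > c$, so $d^{\ast} := \max\{d \in E : d < b\}$ exists. The EWIS producing $P(T;r+1,c)$ moved from index $d^{\ast}$ to the next element of $E$ (or terminated at $d^{\ast}$), and in either case skipped column $b$; by the definition of EWIS this forces $e_b < e_{d^\ast}$. Letting $(u,d^\ast)$ be the location of $P(T;r+1,c)$ in column $d^\ast$, we get $v := d^\ast < b$ and $T(u,v) = e_{d^\ast} > e_b = a_b = T(p,b)$.

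The main obstacle is purely bookkeeping: tracking which cells are marked at each stage and confirming that the marked cells in each column form a contiguous block at the bottom, so that the relationship between $a_b$ and $e_b$ is captured cleanly by whether or not $b \in E$. Once this is in hand, the two cases are essentially readings of the EWIS definition, and no induction on the position of $(p,b)$ within $P(T;r,c)$ is needed, since the witness $(u,v)$ depends only on the column index $b$.
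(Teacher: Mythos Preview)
Your proof is correct and follows essentially the same approach as the paper. Both arguments case on whether column $b$ contains a location of the previous scanning path $P(T;r+1,c)$: your Case~1 ($b \in E$) is the paper's ``$(p,b)$ is not a column bottom'' case, and your Case~2 ($b \notin E$) is its ``$(p,b)$ is a column bottom'' case; the paper phrases these via an explicit reduction to the situation where $(r+1,c)$ is a column bottom of $T$, whereas you absorb that reduction into the $e_b$, $a_b$ bookkeeping.
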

\begin{proof}
Since the scanning algorithm is defined recursively for column bottoms, we may reduce to the case that $(r+1,c)$ is a column bottom of $T$.

 First, suppose $(p, b)$ is a column bottom of $T$.
The location $(p, b)$ is not in  $ P(T;r+1,c)$, but it does belong to the sequence of column bottoms of $T$ which is scanned to form $P(T;r+1,c)$.
Hence its value $T(p,b)$ was not in that previous earliest weakly increasing subsequence.
Therefore there is a column bottom $(u, v)$ of $T$ in the scanning path $P(T;r+1,c)$ strictly to the left of $(p,b)$ such that $T(u, v)>T(p,b)$.

  Now suppose $(p,b)$ is not a column bottom of $T$.  
Since $(p,b)$ is scanned in the formation of $P(T;r,c)$, the location $(p+1, b)$ was marked as part of the previous scanning path $P(T;r+1,c)$.
By the column strict condition on tabloids, its value satisfies $T(p+1, b)>T(p,b)$.
Take $u:=p+1$ and $v:=b$.
\end{proof}

\begin{lemma}\label{key}
Every value in the rightmost column of $T$ appears in every column of $S(T)$.
In particular, the rightmost column of $S(T)$ is the rightmost column of $T$.
\end{lemma}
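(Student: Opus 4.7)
The plan is to fix a column index $c$ of $S(T)$ and analyze the sequence of scans that fill column $c$. These scans operate on the sub-tableau of $T$ consisting of columns $c, c+1, \dots, \lambda_1$, whose rightmost column is column $\lambda_1$ of $T$. The key running invariant is that the value placed in $S(T)$ by any scan equals the final entry of that scan's EWIS, which is precisely the value at the scan's last marked location.

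I would first dispose of the ``in particular'' claim. When $c = \lambda_1$, the relevant sub-tableau is just column $\lambda_1$ of $T$ on its own. Each scan then operates on a length-one sequence whose EWIS is trivially that single value, so the scan marks the current bottom of column $\lambda_1$ and places its value into the corresponding position of $S(T)$. Sweeping from bottom to top reproduces column $\lambda_1$ of $T$ exactly as column $\lambda_1$ of $S(T)$.

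For the main claim, I would invoke the observation already recorded just before the lemma: after all $\zeta_c$ scans for column $c$ of $S(T)$ are performed, every location of the sub-tableau on columns $c, \dots, \lambda_1$ has been marked exactly once. In particular, each of the $\zeta_{\lambda_1}$ locations in column $\lambda_1$ of $T$ must eventually be marked. Because column $\lambda_1$ is the rightmost in the scanned sequence, a scan marks a location in column $\lambda_1$ if and only if its EWIS extends to the final index of the sequence, in which case the marked location is the current bottom of column $\lambda_1$ and the value placed into column $c$ of $S(T)$ is exactly the value at that location. Scans that fail to reach column $\lambda_1$ leave its unmarked portion untouched.

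Combining these pieces, the unmarked bottom of column $\lambda_1$ sweeps upward through $(\zeta_{\lambda_1}, \lambda_1), (\zeta_{\lambda_1}-1, \lambda_1), \dots, (1, \lambda_1)$ as the scans proceed, and every $T(r, \lambda_1)$ is deposited somewhere in column $c$ of $S(T)$. The main obstacle I anticipate is cleanly linking ``the EWIS reaches column $\lambda_1$'' with ``the value placed equals the value at the marked column-$\lambda_1$ location''; this is forced by the fact that the column-$\lambda_1$ index is last in the sequence, so whenever it is included it must be the final EWIS entry. Everything else is bookkeeping built on the partition-shape invariant supplied in the description of the scanning algorithm.
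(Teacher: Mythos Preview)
Your proposal is correct and follows essentially the same approach as the paper: fix a column $c$, use the fact that every location in columns $c,\dots,\lambda_1$ gets marked during the construction of column $c$ of $S(T)$, and observe that any location in column $\lambda_1$ that gets marked must be the terminal entry of its scanning path, hence its value is what gets deposited into column $c$ of $S(T)$. Your separate treatment of the $c=\lambda_1$ case is unnecessary (the general argument already covers it, since column $\lambda_1$ of $S(T)$ has the same length as column $\lambda_1$ of $T$ and both are strictly increasing), but it does no harm.
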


\begin{proof}
Fix a column index $1\leq c \leq \lambda_1$.
As was noted above, every location in $T$ to the right of column $c$ is marked in the construction of column $c$ of $S(T)$.
So every location in the rightmost column of $T$ belongs to a scanning path $P(T;r,c)$ for some $1\leq r \leq \zeta_c$.
These locations must be the end of their respective scanning paths.
\end{proof}

Let $\lambda'$ denote the partition obtained from $\lambda$ by omitting the rightmost column of its shape.
Given a tableau $T$ of shape $\lambda$, let $T'$ denote the tabloid of shape $\lambda'$ obtained by omitting the rightmost column of $T$.

\begin{lemma}\label{delete}
Deleting the rightmost column both before and after forming the scanning tableau, we find that $S(T')\preceq [S(T)]'$.
\end{lemma}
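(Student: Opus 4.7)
The plan is to compare the constructions of $S(T)$ and $S(T')$ column by column. After filling column $c$ of $S(T)$ one ignores the leftmost column of $T$ and clears all marks before constructing column $c+1$, and likewise for $S(T')$, so the comparison decouples across columns. Fix $1 \leq c \leq \lambda_1 - 1$ and compare the scans that fill column $c$ of $S(T)$ (call this the ``long'' scan, which sees column bottoms from columns $c, c+1, \dots, \lambda_1$ of $T$) with those that fill column $c$ of $S(T')$ (the ``short'' scan, which sees column bottoms from columns $c, \dots, \lambda_1 - 1$ only).

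The engine of the proof is an elementary property of the EWIS: if $(b_{i_1}, \dots, b_{i_s})$ is the EWIS of $(b_1, \dots, b_m)$, then the EWIS of the extended sequence $(b_1, \dots, b_m, b_{m+1})$ is either the same tuple (when $b_{m+1} < b_{i_s}$) or it is $(b_{i_1}, \dots, b_{i_s}, b_{m+1})$ (when $b_{m+1} \geq b_{i_s}$). Consequently the short EWIS is always a prefix of the long EWIS, and the long's final element is at least the short's final element.

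I would then induct on the scan step $j = 1, 2, \dots, \zeta_c$ within column $c$ (each step fills one row, proceeding upward from row $\zeta_c$) to establish simultaneously: (a) the marked locations in columns $c, c+1, \dots, \lambda_1 - 1$ of $T$ produced by the long and short constructions agree, with the long construction possibly carrying extra marks in column $\lambda_1$; and (b) $S(T')(\zeta_c - j + 1, c) \leq S(T)(\zeta_c - j + 1, c)$. For the inductive step, (a) guarantees that at the next scan the column bottoms in columns $c, \dots, \lambda_1 - 1$ match, so the short scanned sequence is a prefix of the long scanned sequence; the EWIS observation then yields (b) and preserves (a), since the locations marked by the long EWIS in columns $c, \dots, \lambda_1 - 1$ coincide exactly with those marked by the short EWIS. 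Once column $c$ of $S(T)$ is filled, marks are cleared and the argument repeats verbatim for column $c+1$.

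The main obstacle is simply the bookkeeping imposed by the recursive structure of the scanning algorithm: one must verify that agreement of marks outside the last column persists across the sub-scans within a single column and then resets correctly at the transition to the next column of the scanning tableau. Once the EWIS extension property is isolated, everything else is a direct unwinding of the scanning-tableau definition, and no further machinery (for example, Lemma~\ref{scan path}) is needed.
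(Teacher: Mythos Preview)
Your proposal is correct and follows essentially the same line as the paper's proof. The paper states more tersely that the marked locations within the region $\lambda'$ coincide in the two applications of the scanning algorithm, so that $P(T;r,c)$ is $P(T';r,c)$ with at most one location from column $\lambda_1$ appended; your EWIS extension property and your inductive invariant (a) are precisely the spelled-out justification of that one-sentence assertion.
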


\begin{proof}
Let $(r,c)\in \lambda'$.
In the two applications of the scanning algorithm, the same locations are marked and removed from within the region $\lambda'\subset \lambda$ of $T$ as are from $T'$.
So the scanning path $P(T;r,c)$ is the path $P(T';r,c)$ with at most one location appended from the rightmost column of $T$.
Since the values within a scanning path weakly increase, the value at the end of $P(T';r,c)$ is less than or equal to the value at the end of $P(T;r,c)$.
The value at location $(r,c)$ in $S(T')$ is the value at the end of $P(T';r,c)$, and the value at $(r,c)$ in $S(T)$ is the value at the end of $P(T;r,c)$.
\end{proof}

\newpage

Now we fix a $Q$-chain $\pi$.
Form its $\lambda$-key $Y_\lambda(\pi)$.
In this $\pi$-specific environment, the notion of tableau is more complicated:
\begin{defn}
A tableau $T$ of shape $\lambda$ is \emph{$\pi$-Demazure} if its scanning tableau satisfies $S(T)\preceq Y_\lambda(\pi)$.
\end{defn}
In Theorem~\ref{main} we use $\pi$-Demazure tableaux to index the standard monomial basis for the Schubert variety indexed by $\pi$.

\begin{lemma} \label{chopD}
If a tableau $T$ of shape $\lambda$ is $\pi$-Demazure, then the tableau $T'$ of shape $\lambda'$ is $\pi$-Demazure.
\end{lemma}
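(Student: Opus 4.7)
The plan is to chain together Lemma~\ref{delete} with a direct inspection of how the $\lambda$-key $Y_\lambda(\pi)$ is constructed column-by-column, so that truncating on the right commutes with forming the key.

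First I would apply Lemma~\ref{delete} directly to $T$ to obtain $S(T') \preceq [S(T)]'$. Next, since $T$ is $\pi$-Demazure, by definition $S(T) \preceq Y_\lambda(\pi)$. Because $\preceq$ is entrywise, restricting both sides to the sub-region $\lambda' \subset \lambda$ yields $[S(T)]' \preceq [Y_\lambda(\pi)]'$. Combining these two inequalities transitively gives $S(T') \preceq [Y_\lambda(\pi)]'$.

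It then remains to identify $[Y_\lambda(\pi)]'$ with $Y_{\lambda'}(\pi)$. This is a bookkeeping check using the construction of the $\lambda$-key: its columns, read left to right, consist of $\lambda_n$ copies of $Y([n])$, then $\lambda_{q_k} - \lambda_{q_{k+1}}$ copies of $Y(P_k)$, and so on, ending with $\lambda_{q_1} - \lambda_{q_2}$ copies of $Y(P_1)$. Since the column lengths of $\lambda$ weakly decrease from left to right, the rightmost column of $\lambda$ has the smallest length, say $q_\ell$ (or possibly $n$ if $\lambda$ is a single column of height $n$, though that case is trivial); deleting this column simply decreases by one the count of columns of $\lambda$ of that length, which is exactly the column structure of $\lambda'$. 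Hence truncating the rightmost column of $Y_\lambda(\pi)$ produces precisely $Y_{\lambda'}(\pi)$, and $S(T') \preceq Y_{\lambda'}(\pi)$, meaning $T'$ is $\pi$-Demazure.

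There is no real obstacle here — the content of the lemma is already packaged in Lemma~\ref{delete}. The only point requiring care is the verification that dropping the rightmost column of $Y_\lambda(\pi)$ yields $Y_{\lambda'}(\pi)$; this needs noting that the rightmost column of $\lambda$ is the one of shortest length, so removing it preserves the condition $Q(\lambda') \subseteq Q$ and simply decrements by one the number of copies of a single column $Y(P_\ell)$ (or $Y([n])$) used in the juxtaposition.
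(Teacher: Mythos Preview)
Your proposal is correct and follows essentially the same route as the paper's own proof, which is the one-line chain $S(T') \preceq [S(T)]' \preceq [Y_\lambda(\pi)]' = Y_{\lambda'}(\pi)$ via Lemma~\ref{delete}. The only difference is that you spell out the verification of $[Y_\lambda(\pi)]' = Y_{\lambda'}(\pi)$, which the paper takes as evident; your bookkeeping argument is fine (a minor wording quibble: the rightmost column can have length $n$ whenever \emph{all} columns have length $n$, not only when $\lambda$ is a single column, but the reasoning is unaffected).
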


\begin{proof}
By the previous lemma, we have $S(T')\preceq [S(T)]' \preceq [Y_\lambda(\pi)]' = Y_{\lambda'}(\pi)$.
\end{proof}

\section{Flags of subspaces and tabloid monomials}\label{DEFS}

We now introduce the main objects of the paper:  flags of subspaces, Bruhat cells, Schubert varieties, and tabloid monomials.
For Sections~\ref{SPAN1} and \ref{SPAN2},  only the definitions concerning tabloid monomials are needed.
Along the way we mention five facts about these structures for motivation which are formally stated and proved in Sections~\ref{PREF} and \ref{MON}.
Our main result, Theorem \ref{main}, is stated at the end of this section.

\begin{defn}
A \emph{$Q$-flag} of $\mathbb{C}^n$ is a sequence of subspaces $V_1 \subset V_2 \subset \cdots \subset V_k \subset \mathbb{C}^n$ such that $\dim(V_j)=q_j$ for $1 \leq j \leq k$.
\end{defn}
We denote the set of $Q$-flags in $\mathbb{C}^n$ by $\mathcal{F}\ell_Q$.
An ordered basis $(v_1, v_2, \dots, v_n)$ of column vectors for $\mathbb{C}^n$ is presented in this paper as the $n \times n$ invertible matrix $[v_1, v_2, \dots, v_n]$ whose columns from left to right are $v_1, \dots, v_n$.
Define a map $\Phi_Q$ from the ordered bases for $\mathbb{C}^n$ to $\mathcal{F}\ell_Q$ by sending an ordered basis $f:=[v_1, \dots, v_n]$ to the $Q$-flag $\Phi_Q(f)$ of subspaces $V_j=$span$(\{v_i| i \leq q_j \})$ for $1 \leq j \leq k$.
Any $Q$-flag can be represented in this way by many ordered bases.
Special $Q$-flags can be made using the axis basis vectors $e_1, \dots, e_n$ for $\mathbb{C}^n$:
For each $Q$-chain $\pi= (P_1 , P_2 , \cdots , P_k)$, construct the \emph{$Q$-chain flag} $\varphi(\pi)$ of subspaces $V_j:=$span$(\{e_i| i \in P_j\})$ for $1 \leq j \leq k$.
Given a $Q$-chain $\pi$, form the $Q$-permutation $\overline \pi$ as in Section~\ref{SETUP}.
Define the $n \times n$ matrix $s_\pi$ to be the permutation matrix whose $(\overline \pi _j, j)$ entry is $1$ for $1\leq j \leq n$.
It is clear that $\varphi(\pi)=\Phi_Q(s_\pi)$, when $s_\pi$ is viewed as an ordered basis.

Let $B$ denote the subgroup of upper triangular matrices within $GL_n$, the group of invertible matrices.
\begin{defn}
Let $\pi$ be a $Q$-chain.  The \emph{Bruhat cell} $C(\pi)$ is the set $\{\Phi_Q(bs_\pi)| b\in B\}$ of $Q$-flags which can be produced from the ordered basis $s_\pi$ for the $Q$-chain flag $\varphi(\pi)$ with the action of the upper triangular matrices.
\end{defn}
We will see (Fact \ref{partition}) that every $Q$-flag belongs to a unique Bruhat cell.
The following disjoint unions of Bruhat cells are important subsets of $\mathcal{F}\ell_Q$:
\begin{defn}
Let $\pi$ be a $Q$-chain.  We define the \emph{Schubert variety} $X(\pi)$ to be the union of cells $\bigsqcup\limits_{\rho \preceq \pi} C(\rho)$.
\end{defn}

Our goal is to develop a coordinatization of $\mathcal{F}\ell_Q$.
Recall that projective space $\mathbb{P}(\mathbb{C}^n)$ is the set of lines through the origin in $\mathbb{C}^n$; hence it is the set $\mathcal{F}\ell_{\{1\}}$ of $\{1\}$-flags.
The set  $\mathbb{P}(\mathbb{C}^n)$ does not have global coordinates in the usual (affine) sense.
But it can be coordinatized by \emph{projective coordinates}:
A point $L \in \mathbb{P}(\mathbb{C}^n)$ is indexed by an equivalence class $[(p_1, p_2, \dots, p_n)]$ of $n$-tuples, where $(p_1, p_2, \dots, p_n)\in \mathbb{C}^n$ is a nonzero point on the line  $L$ and two $n$-tuples $(p_1, p_2, \dots, p_n)$ and $(p_1', p_2', \dots, p_n')$ are equivalent if there is a nonzero $\alpha \in \mathbb{C}$ such that $(p_1, p_2, \dots, p_n)=(\alpha p_1', \alpha p_2', \dots, \alpha p_n')$.

From now on, fix an $n$-partition $\lambda$ such that $Q(\lambda)\subseteq Q$.
Now we begin to form projective coordinates for $\mathcal{F}\ell_Q$ from tabloids of shape $\lambda$.
Let $\mathbb{C}[x_{ij}]$ denote the ring of polynomials in the $n^2$ coordinates of a sequence of $n$ vectors from $\mathbb{C}^n$.
Fix $1\leq p \leq n$.
Let $f$ be an $n \times n$ matrix.
For any $1\leq q \leq n$, define the \emph{$q$-initial submatrix of $f$ with rows $r_1, \dots, r_p$} to be the $p \times q$ matrix whose $i^{th}$ row consists of the first $q$ entries of the $r_i^{th}$ row of $f$.
When $p=q$, in $\mathbb{C}[x_{ij}]$ we form for $f$ its \emph{$q$-initial minor with rows $r_1, \dots, r_q$}: this is the determinant of its $q$-initial submatrix with rows $r_1, \dots, r_q$.

\begin{defn}
Let $T$ be a tabloid of shape $\lambda$.  For each column index $1\leq c \leq \lambda_1$, form  in $\mathbb{C}[x_{ij}]$ the $\zeta_c$-initial minor with indices $T(1,c), \dots, T(\zeta_c, c)$.  
The \emph{monomial} of $T$, denoted by the corresponding Greek letter $\tau$, is the product of these minors.
Let $\pi$ be a $Q$-chain.
In particular, the monomial of the $\lambda$-key $Y_\lambda(\pi)$ is denoted $\psi_\lambda(\pi)$.
\end{defn}

Let $F$ be a $Q$-flag.
We will see (Lemma \ref{projective}) that the sequence of the valuations of all tabloid monomials of shape $\lambda$ on the ordered bases for $F$ is projectively well defined:
Varying the choice of basis $f$ such that $\Phi_Q(f)=F$ will scale all these values equally.
We will also see (Fact \ref{injective}) that when $Q(\lambda)=Q$, this sequence of monomials give a faithful projective coordinatization of the set $\mathcal{F}\ell_Q$ of $Q$-flags.

\begin{defn}
Let $\Gamma_\lambda$ denote the vector subspace of polynomials in $\mathbb{C}[x_{ij}]$ that are linear combinations of the tabloid monomials of shape $\lambda$. 
\end{defn}

While it is useful to consider the set of all tabloid monomials, the following long-known result shows that that set is much larger than is needed to span $\Gamma_\lambda$:

\begin{theorem}  \label{tableaux}
Let $\lambda$ be an $n$-partition.  The monomials of the semistandard tableaux of shape $\lambda$ form a basis of the vector space $\Gamma_\lambda$.
\end{theorem}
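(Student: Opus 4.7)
The plan is to prove spanning and linear independence separately. Theorem~\ref{tableaux} is the special case of Theorem~\ref{main} where $\pi$ is the top $Q$-chain, namely the $Q$-chain with $P_j=\{n-q_j+1,\dots,n\}$: for this $\pi$, $Y_\lambda(\pi)$ is the pointwise maximal tableau of shape $\lambda$, so $S(T)\preceq Y_\lambda(\pi)$ holds automatically for every semistandard tableau $T$, making every such $T$ a $\pi$-Demazure tableau. I would either carry out the argument directly for this specialization or read it off from the eventual proof of Theorem~\ref{main}.

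For the spanning half I would develop a straightening algorithm on tabloids. Given a tabloid $T$ of shape $\lambda$ that is not a semistandard tableau, there is an adjacent pair of columns $c,c+1$ at which the row-increasing condition fails. I would apply a Turnbull-type determinantal identity --- a signed Laplace expansion of a product of two minors, as used in \cite{RS2} and cited in the introduction via \cite{Turnbull} --- to the two corresponding factors of $\tau$. This identity rewrites the product as a linear combination of products of minors in which certain row indices have been exchanged between the two columns, realizing $\tau$ as a linear combination of tabloid monomials $\tau_i$ whose tabloids $T_i$ are strictly smaller than $T$ in a well-founded order that can be read off the scanning tableau. Iterating terminates at tabloids with no violations, i.e.\ at semistandard tableaux.

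For the linear independence half I would use a leading-monomial argument. Fix the order on variables declaring $x_{ij}>x_{i'j'}$ whenever either $i<i'$, or $i=i'$ with $j<j'$, and the induced lexicographic order on monomials. For each $q$ and each increasing sequence $r_1<r_2<\cdots<r_q$ of rows, the leading term of the $q$-initial minor on those rows is then the identity-permutation diagonal $x_{r_1,1}x_{r_2,2}\cdots x_{r_q,q}$, since among the Laplace terms this is the unique one whose variable in every column has the smallest possible row index. Consequently the leading monomial of $\tau$ is $m_T := \prod_{(i,c)\in\lambda}x_{T(i,c),\,i}$, which records the multiset of values appearing in each row of $T$. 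For a semistandard tableau each row is weakly increasing, so this multiset determines the row and hence determines $T$. Distinct semistandard tableaux therefore yield distinct leading monomials, and the corresponding monomials in $\Gamma_\lambda$ are linearly independent.

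The main obstacle is the spanning step: one must certify that the straightening identity produces tabloids that are strictly smaller in a well-founded order. Content alone is preserved by the identity and so cannot certify descent, so a finer invariant is needed. The scanning-tableau entries emphasized in this paper are what make the argument terminate cleanly, in contrast to the jeu-de-taquin approach of \cite{RS2}.
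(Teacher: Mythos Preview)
Your proposal is correct, and it mixes the paper's approach with an alternative one in ways worth separating.

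For spanning, your sketch is the paper's (Theorem~\ref{span 1} via Propositions~\ref{snake} and~\ref{downstream}): locate a row violation $T(r,c)>T(r,c+1)$, apply the determinantal identity of Lemma~\ref{shuffles} to the adjacent pair of columns, and iterate. But your claim that the termination order ``can be read off the scanning tableau'' is a misconception. The paper certifies descent with the plain column-lexicographic total order $\le$ on tabloids (defined just before Lemma~\ref{sort}), and the scanning tableau plays no role whatsoever in the proof of Theorem~\ref{tableaux}. Scanning enters only in Section~\ref{SPAN2}, where it is used to \emph{choose the region} $\mu$ for the $\pi$-Demazure refinement (Proposition~\ref{turnb}); even there, descent is still certified with the same lexicographic order. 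So your final paragraph has the roles reversed: the finer invariant needed for termination is lexicographic, not scanning-based.

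For linear independence, your leading-monomial argument is correct and is precisely the direct approach the paper attributes to \cite{MS} at the opening of Section~\ref{PREF} but deliberately does not carry out. Instead the paper derives independence as Corollary~\ref{TabInd} of Theorem~\ref{LI}: take the maximal $Q$-chain $\pi_0$ (your ``top $Q$-chain''), observe that every tableau is $\pi_0$-Demazure and that $Z_\lambda(\pi_0)=0$, and invoke the Demazure result. Your route is more elementary and valid over any commutative ring; the paper's route, as written, needs characteristic zero for Proposition~\ref{path}, but buys a single independence proof that simultaneously covers Theorems~\ref{tableaux} and~\ref{main}.
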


Such monomials are called \emph{tableau monomials}.
The spanning and linear independence parts of this basis theorem are reproved here as Theorem~\ref{span 1} and Corollary~\ref{TabInd}.
This theorem implies that when $Q(\lambda)=Q$, the sequence of tableau monomials gives an efficient coordinatization of $\mathcal{F}\ell_Q$.

Now we return to having a $Q$-chain $\pi$ fixed, as at the end of Section~\ref{SETUP}.
Again form its $\lambda$-key $Y_\lambda(\pi)$.
Define the subspace $Z_\lambda(\pi)\subseteq \Gamma_\lambda$ to be the span of the monomials of tabloids $T$ such that $T \not \preceq Y_\lambda(\pi)$.

\begin{defn}
Let $\pi$ be a $Q$-chain.
The \emph{Demazure quotient} for $\pi$ is the vector space $\Gamma_\lambda(\pi):=\Gamma_\lambda/ Z_\lambda(\pi)$.
\end{defn}

We will see (Lemma \ref{zero}) that all tabloid monomials in $Z_\lambda(\pi)$ are zero on the Schubert variety $X(\pi)$.
If moreover $Q(\lambda)=Q$ , then $X(\pi)$ is the zero set in $\mathcal{F}\ell_Q$ of $Z_\lambda(\pi)$ (Fact \ref{zero_set}).
Hence we consider $\Gamma_\lambda(\pi)$ to be the span of the restrictions of the tabloid monomials to $X(\pi)$.
We simply write ``monomial" to refer to the residue of a monomial in $\Gamma_\lambda(\pi)$.
Since the set of tableau monomials is now much larger than is needed to span $\Gamma_\lambda(\pi)$, we need an analog of Theorem \ref{tableaux} for the space $\Gamma_\lambda(\pi)$.
Our main result is a new proof of the following theorem that is  based on the scanning tableaux $S(T)$:

\begin{theorem}\label{main}
Fix a nonempty $Q\subseteq [n-1]$.  Let $\lambda$ be an $n$-partition such that $Q(\lambda)\subseteq Q$ and let $\pi$ be a $Q$-chain.  The monomials of the $\pi$-Demazure tableaux of shape $\lambda$ form a basis of the vector space $\Gamma_\lambda(\pi)$.
\end{theorem}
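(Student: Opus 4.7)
The plan is to establish the two halves of Theorem~\ref{main} separately, using Theorem~\ref{tableaux} as the starting point and the scanning-tableau lemmas of Section~\ref{SETUP} as the main combinatorial engine. For spanning, since Theorem~\ref{tableaux} already tells us that tableau monomials span $\Gamma_\lambda$, I need only rewrite each tableau monomial modulo $Z_\lambda(\pi)$ as a combination of monomials of $\pi$-Demazure tableaux. I would run an induction on a lex-type ordering built from the pair $(S(T), T)$: for a tableau $T$ that is not $\pi$-Demazure, let $(r,c)$ be the rightmost column and lowest row at which $S(T)(r,c) > Y_\lambda(\pi)(r,c)$. Lemma~\ref{scan path} then pinpoints indices inside two adjacent minor factors of $\tau$ which can be exchanged via the Turnbull--Laplace determinantal identity of \cite{Turnbull} used in \cite{RS2}. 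Each resulting tabloid either violates $T' \preceq Y_\lambda(\pi)$ (so its monomial lies in $Z_\lambda(\pi)$) or, after being rewritten in the tableau basis via Theorem~\ref{tableaux}, is strictly smaller in the chosen ordering. A column-by-column reduction via Lemma~\ref{chopD} keeps $\pi$-Demazure-ness compatible with passage from $\lambda$ to $\lambda'$ and drives the algorithm to termination.

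For linear independence, I would induct on $\pi$ in the Bruhat order of $Q$-chains. Suppose $\sum_T c_T \tau \in Z_\lambda(\pi)$ with $T$ ranging over the $\pi$-Demazure tableaux of shape $\lambda$. Then $\sum_T c_T \tau$ vanishes on the Schubert variety $X(\pi)$, using the vanishing fact to be established in Section~\ref{MON}. Partition the index set into \emph{top} tableaux with $S(T) = Y_\lambda(\pi)$ and \emph{lower} tableaux with $S(T) \prec Y_\lambda(\pi)$. Since scanning tableaux are themselves $\lambda$-keys (via Lemma~\ref{key} applied recursively), Lemmas~\ref{stepdown} and \ref{order} together show that each lower tableau is $\rho$-Demazure for some $\rho \prec \pi$; the inductive hypothesis applied on each $X(\rho)$ will kill the lower coefficients once the top ones are eliminated. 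To eliminate the top coefficients, I would evaluate the relation on the Bruhat cell $C(\pi) \subseteq X(\pi)$, parametrized by $b \mapsto \Phi_Q(b s_\pi)$ with $b \in B$, and take the leading term of each minor factor in the entries of $b$. The expectation is that each top tableau contributes a distinct nonzero initial monomial, giving a triangular system that forces every top $c_T$ to vanish.

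The main obstacle is this leading-term analysis on $C(\pi)$. I must verify that for a top $\pi$-Demazure tableau $T$ each minor factor of $\tau$ has a predictable nonzero initial monomial whose chosen rows are dictated by $Y_\lambda(\pi)$, and then that the resulting initial monomials for different top tableaux are distinct and admit a compatible triangular order, presumably induced by $\preceq$ through Lemma~\ref{order}. This triangularity is what makes the inductive step close. I expect scanning tableaux to streamline the argument relative to the right-key jeu de taquin of \cite{RS2}, because Lemmas~\ref{scan path}, \ref{key}, and \ref{delete} give direct control over how the column data of $S(T)$ govern the row selections inside each minor factor of $\tau$. Interfacing this triangular control on the independence side with the straightening on the spanning side yields Theorem~\ref{main}.
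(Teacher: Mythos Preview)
Your spanning sketch is in the right spirit but understates the combinatorics: the Turnbull identity must be applied with a region $\mu$ that is not confined to two adjacent columns. In the paper, when $S(T)(r,c) > Y_\lambda(\pi)(r,c)$, the region $\mu$ is assembled from one location on each of the scanning paths $P(T;r,c), P(T;r+1,c), \dots, P(T;\zeta_c,c)$ (Lemma~\ref{scan path} supplies the next location from the previous one), and these locations sit in columns $b_r \geq b_{r+1} \geq \dots \geq b_{\zeta_c} \geq c$, potentially all distinct. A two-column swap will not produce the relation that pushes every $\mu$-shuffle of $T$ either into $Z_\lambda(\pi)$ or strictly below $T$ in the total order. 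Once $\mu$ is taken this way, the straightening runs on the single total order on tabloids, not on the pair $(S(T),T)$.

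Your linear-independence strategy is genuinely different from the paper's, and the gap you yourself flag as ``the main obstacle'' is real and not closed. The paper does \emph{not} induct on $\pi$ in Bruhat order and does \emph{not} perform any leading-term analysis on $C(\pi)$. Instead it inducts on the number of columns of $\lambda$: given a nonzero combination $\sum a_i \tau_i$, it picks a $\preceq$-minimal rightmost column among the $T_i$, manufactures from it and from $\pi$ a single $Q$-chain $\rho \preceq \pi$ such that (i) every $T_i$ not sharing that rightmost column has its monomial in $Z_\lambda(\rho)$, and (ii) every $T_i$ sharing it is $\rho$-Demazure. Lemma~\ref{chopD} then reduces to shape $\lambda'$, the inductive hypothesis supplies a basis $f$ with $\Phi_Q(f)\in C(\rho)$ on which the combination is nonzero, and Corollary~\ref{closure} (via the explicit path of Proposition~\ref{path}) transfers nonvanishing from $C(\rho)$ back up to $C(\pi)$. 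No triangular initial-term argument is needed.

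Your Bruhat-induction plan has two further soft spots beyond the unproved triangularity. First, the claim that $S(T)$ is itself a $\lambda$-key does not follow from Lemmas~\ref{key} and \ref{delete} as stated; the paper neither proves nor uses this. Second, even granting it, the ``lower'' tableaux need not all be $\rho$-Demazure for a \emph{single} $\rho \prec \pi$, so you cannot apply the inductive hypothesis on one smaller Schubert variety; you would be forced back to unions of $X(\rho)$'s, which is exactly the complication the paper's column induction is designed to avoid.
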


Such monomials are called \emph{$\pi$-Demazure monomials}.
The spanning and linear independence parts of this basis theorem are Theorem~\ref{span 2} and Theorem~\ref{LI}.
This theorem implies that when $Q(\lambda)=Q$, the sequence of $\pi$-Demazure monomials gives an efficient coordinatization of the Schubert variety $X(\pi)$ of $\mathcal{F}\ell_Q$.

\section{Tableau monomials span $\Gamma_\lambda$} \label{SPAN1}

Before we prove the spanning part of our main result, Theorem~\ref{main}, in the next section, we must first prove the spanning of $\Gamma_\lambda$ by tableau monomials in Theorem~\ref{tableaux}.  
We begin by presenting a translation of a classical determinantal identity into the language of tabloid monomials.   
This is a ``master" identity that we use in two ways to prove the two spanning results by establishing relations amongst certain monomials.  
The idea of both proofs is the same: Using a total order on the set of tabloids, we provide straightening algorithms for applying the master identity.  
Each use of the identity progresses in the same direction under this order.  
The control afforded by the total order implies the termination of the algorithm.  
This is a common strategy; it was also used in \cite{RS2}.

Fix an $n$-partition $\lambda$;  the sets $Q$ and $Q(\lambda)$ play no role in this section.
Fix a tabloid $T$ of shape $\lambda$ and a region $\mu\subseteq\lambda$.
The region $\mu$ selects which locations are ``active" in the master identity.
The multiset of values of $T$ within $\mu$ is denoted $T(\mu)$.
For $1\leq j \leq \lambda_1$, let $T_j$ denote column $j$ of $T$ and let $\mu_j$ denote the intersection of $\mu$ with column $j$ of $\lambda$.
Let $\bar\mu$ denote the region of $\lambda$ complementary to $\mu$.

\begin{defn}
A \emph{$\mu$-shuffle of $T$} is a permutation of the values of $T$ that can be obtained by the composition of two permutations as follows:
First permute the values within the region $\mu$ such that the values within a given column are distinct. 
Then sort the values within each column into ascending order to obtain a tabloid.
\end{defn}

Given a $\mu$-shuffle $\sigma$ of $T$, the resulting tabloid is denoted $T_\sigma$ and its monomial is denoted $\tau_\sigma$.
Let $\epsilon(\sigma)$ denote the sign of $\sigma$ as a permutation.
For a tabloid $T$ with repeated values, it is possible that for $\mu$-shuffles $\sigma_1\neq \sigma_2$ of $T$ we have $T_{\sigma_1}=T_{\sigma_2}$.

 We prepare to construct a square compound matrix $M_\mu(T)$ based on $T$ and $\mu$.
Let $g$ be the $n \times n$ matrix $(x_{ij})$ of $n^2$ indeterminants.
First we split each of  the initial square submatrices whose minors in $g$ form the monomial $\tau$ of $T$ into two rectangular parts:
For each $1 \leq j \leq \lambda_1$ form the $\zeta_j \times |\mu_j|$ ``active" matrix $A_j$  by transposing the $\zeta_j$-initial submatrix of $g$ whose rows are specified by the values of $T_j(\mu)$.
Also form the $\zeta_j \times |\bar\mu_j|$ ``inactive" matrix $N_j$ by transposing the $\zeta_j$-initial submatrix of $g$ similarly specified by the values of $T_j(\bar \mu)$.
The total number of columns in $A_j$ and $N_j$ is $|\mu_j|+|\bar\mu_j|=\zeta_j$.
Let $A_j \sqcup N_j$ denote the $\zeta_j \times \zeta_j$ concatenation of the matrices $A_j$ and $N_j$.
Except for the order of its columns, the matrix $A_j \sqcup N_j$ is the transpose of the $\zeta_j$-initial submatrix specified by the column $T_j$.
So its determinant is the monomial $\tau_j$ of $T_j$, up to a sign.
These $\zeta_j \times \zeta_j$ matrices form the main diagonal blocks of the compound matrix $M_\mu(T)$.

Now in addition let $1 \leq i \leq \lambda_1$.
Form the rectangular matrix $A^{<i>}_j$ by transposing the $\zeta_i$-initial submatrix of $g$ whose rows are specified by the values of $T_j(\mu)$.
Then let $A^{<i>}_j \sqcup 0$ denote the $\zeta_i \times \zeta_j$ concatenation of the matrix $A^{<i>}_j$ with a $\zeta_i \times |\bar\mu_j|$ zero matrix.
These $\zeta_i \times \zeta_j$ matrices form the off-diagonal blocks of the compound matrix $M_\mu(T)$.

 Define the matrix $M_\mu(T)$ to be the $(\zeta_1+\cdots + \zeta_{\lambda_1})$-square compound matrix whose $j^{th}$ diagonal block is $A^{<j>}_j \sqcup N_j$ and whose non-diagonal block in the $(i,j)$ block position is $A^{<i>}_j \sqcup 0$:
$$M_\mu(T) :=\begin{bmatrix}
A^{<1>}_1 \sqcup N_1 & A^{<1>}_2 \sqcup 0 & \cdots & A^{<1>}_{\lambda_1} \sqcup 0\\
A^{<2>}_1 \sqcup 0 & A^{<2>}_2 \sqcup N_2 & \cdots & A^{<2>}_{\lambda_1} \sqcup 0\\
\vdots & & &\vdots\\ 
\vdots & & &\vdots \\
A^{<\lambda_1>}_1 \sqcup 0 & A^{<\lambda_1>}_2 \sqcup 0 & \cdots & A^{<\lambda_1>}_{\lambda_1} \sqcup N_{\lambda_1}\end{bmatrix}$$

The following lemma is our master identity.  
It says that $|M_\mu(T)|$ is a polynomial in $\Gamma_\lambda$.
It is the translation hinted at by Reiner and Shimozono of the left side of the determinantal identity (III.11) in \cite{Turnbull} that they use as the left side of equation (5.3) in \cite{RS2} .  
Note that the product of the determinants of the diagonal blocks of $M_\mu(T)$ is the monomial $\tau$, up to a sign.
This sign is the sign ambiguity in the statement of the lemma.
This ambiguity vanishes in our applications.

\begin{lemma} \label{shuffles}
Let $T$ be a tabloid of shape $\lambda$ and let $\mu \subseteq \lambda$. The determinant $|M_\mu(T)|$ is, up to sign, the signed sum of monomials $\sum \epsilon(\sigma)\tau_\sigma,$ where the sum runs over all $\mu$-shuffles $\sigma$ of $T$.
\end{lemma}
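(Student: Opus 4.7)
The plan is to compute $|M_\mu(T)|$ via the Leibniz formula and identify the nonvanishing terms with $\mu$-shuffles of $T$. The crucial structural observation is that for each off-diagonal block $A^{<i>}_j \sqcup 0$ with $i \neq j$, the final $|\bar\mu_j|$ columns are zero, so any permutation contributing to the determinant must send every inactive column of block column $j$ into a row of block row $j$. Applying the generalized Laplace expansion along the inactive columns, I would sum over choices of subsets $I_j \subseteq [\zeta_j]$ with $|I_j| = |\bar\mu_j|$ representing the rows matched with the $N_j$-columns; the contribution factors as a subset-choice sign times $\prod_j \det N_j[I_j, :]$ times the determinant of the complementary ``active submatrix,'' whose rows lie in $A := \bigsqcup_j ([\zeta_j] \setminus I_j)$ and whose columns are the $|\mu|$ active columns of the full matrix.

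Next I would analyze the active submatrix. Its rows are indexed by pairs $(i, b)$ with $b \in A_i := [\zeta_i] \setminus I_i$; its columns are indexed by the cells of $\mu$ (the active columns of block $j$ corresponding to cells $(r,c) \in \mu$ with $c = j$); and the entry at $((i, b), (r, c))$ is simply $x_{T(r, c),\, b}$. Leibniz-expanding this determinant over bijections $\pi : \mu \to A$ and grouping the terms by the block $i$ containing $\pi((r, c))$ records to which new column each value $T(r, c)$ has been sent. For any such destination assignment, the inner sum over positions $b$ within each block---jointly with the Leibniz expansion of the inactive factor over bijections $T_j(\bar\mu) \to I_j$---reconstructs, for each new column $i$, the full Leibniz expansion of the $\zeta_i$-initial minor of $g$ whose row indices are the values newly placed in column $i$. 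This minor vanishes unless that column's values are all distinct, which is precisely the column-distinctness condition in the definition of a $\mu$-shuffle; when it does not vanish, sorting each new column into ascending order turns the minor into the corresponding factor of the tabloid monomial $\tau_\sigma$.

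Finally I would collect signs. Each choice of $(I_j, \alpha_j, \pi)$ carries a Laplace subset-choice sign, the internal permutation signs of $\alpha_j$ and $\pi$, and a column-sort sign from each new column. The claim is that these collapse, for each resulting $\mu$-shuffle $\sigma$, to $\epsilon(\sigma)$ times a global $\pm 1$ that depends only on $T$ and $\mu$; this global sign is precisely the same ambiguity that the author notes just before the statement, arising from the diagonal blocks of $M_\mu(T)$ presenting each $T_j$ in the (possibly unsorted) ``active-then-inactive'' order. The main obstacle will be this sign bookkeeping---verifying that the shuffle-dependence of the total sign reduces cleanly to $\epsilon(\sigma)$. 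I would handle it by fixing a canonical linear ordering on the cells of $\mu$, reinterpreting $\pi$ as a permutation of that ordered set, and then checking that the remaining rearrangement signs cancel pairwise to leave exactly $\epsilon(\sigma)$ times a global constant independent of $\sigma$.
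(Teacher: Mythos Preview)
Your proposal is correct and reaches the same conclusion, but the decomposition differs from the paper's. The paper computes $|M_\mu(T)|$ by an \emph{iterated Laplace expansion along block rows}: expand first along the $\zeta_1$ rows forming block row~1, then along the $\zeta_2$ rows forming block row~2 of the complementary minor, and so on. At step $j$ the nonvanishing primary minors are forced to include every column of $N_j$ (otherwise the complementary minor has a zero column), and the remaining $|\mu_j|$ columns of the primary minor are chosen from the $A^{\langle j\rangle}_i$ blocks; the values indexing those columns directly build the column tabloid $U_j$ of the shuffled tabloid. Your approach instead expands first along the \emph{inactive columns} (using their zero structure to force the matched rows into the diagonal blocks), and then Leibniz-expands the residual active submatrix, regrouping by destination block to recover the new columns.

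Both routes are valid. The paper's ordering has two practical advantages. First, the shuffle and the column tabloids $U_j$ emerge one column at a time with no regrouping: each iteration of the Laplace expansion produces one factor $\upsilon_j$ of the shuffled monomial outright. Second, the sign argument collapses to a single comparison with the diagonal term $|A^{\langle 1\rangle}_1\sqcup N_1|\cdots|A^{\langle\lambda_1\rangle}_{\lambda_1}\sqcup N_{\lambda_1}|$, which corresponds to the identity shuffle; once that global sign is fixed, the relative sign between any two terms is automatically $\epsilon(\sigma)$ because the only change between terms is the choice of active columns at each step. In your column-first approach the same content is present, but reconstructing each $\zeta_i$-initial minor requires commuting the outer sum over the subsets $(I_j)_j$ with the inner Leibniz sum over $\pi$, and the sign bookkeeping you flag as the main obstacle now involves a Laplace subset-choice sign, a block-reordering sign from the inactive-column determinant, and the column-sort signs all at once. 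It can be done, but the paper's row-by-row expansion keeps these pieces separated and makes the reduction to $\epsilon(\sigma)$ essentially immediate.
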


\begin{proof}
We calculate $|M_\mu(T)|$ by an iterated Laplace expansion process.
We then show that the nonzero terms in this expansion correspond to the $\mu$-shuffles of $T$.
Begin to calculate $|M_\mu(T)|$ by Laplace expansion on the first $\zeta_1$ rows, which form the first row of blocks.  
This expresses the determinant as the sum of the products of $\zeta_1 \times \zeta_1$ ``primary" minors and $(\zeta_2+\cdots+\zeta_{\lambda_1})$-square ``complementary" minors.

   Most of the products in this sum vanish because one of the two minors has a zero column or because the primary minor has repeated columns.
Fix a summand that does not vanish for either of these reasons.  
Since the complementary minor at hand cannot have a zero column, its primary minor must include all of the columns of $N_1$.
The primary minor's other columns come from various $A^{<1>}_j$ blocks.
Recall that  the columns of $N_1$ are the initial segments of the rows of $g$ indexed by the values of $T_1(\bar\mu)$ and that the columns of $A^{<1>}_j$ are the initial segments of the rows of $g$ indexed by the values of $T_j(\mu)$.
Because the primary minor does not have repeated columns, the values of $T_1(\bar\mu)$ and those of $T(\mu)$ that correspond to the columns of the $A^{<1>}_j$ blocks that contribute to the primary minor are distinct.
Therefore we may define a column tabloid $U_1$ of length $\zeta_1$ that is filled with all the values of $T_1(\bar\mu)$ together with these values from $T(\mu)$.
Except for the order of its columns, the primary minor is the determinant of the $\zeta_1$-initial submatrix specified by the column $U_1$.
So this primary minor is the monomial $\upsilon_1$ of $U_1$, up to a sign.

 Now consider the complementary minor of our fixed summand.
Begin the next iteration by computing this determinant by Laplace expansion on its first $\zeta_2$ rows, which form its first row of blocks.
Fix a summand as above.
Analogously define a column tabloid $U_2$ of length $\zeta_2$  filled with all the values of $T_2(\bar\mu)$ and the values of $T(\mu)$ corresponding to this new primary minor.
Again this primary minor is the monomial $\upsilon_2$ of $U_2$, up to a sign.
Note that the values of $T(\mu)$ used here come from different locations within $\mu$ than those in the first iteration.

 Continue iterating this Laplace expansion process.
In the end, we find that our fixed nonzero term is the product $\upsilon_1\cdots\upsilon_{\lambda_1}$, up to a sign.
This is the monomial $\upsilon$ of the tabloid $U$ of shape $\lambda$ that is formed by the juxtaposition of the column tabloids $U_1, \dots, U_{\lambda_1}$.
The value from each location of $T$ was used exactly once to construct $U$.
Hence, the tabloid $U$ was formed by a permutation $\sigma$ of the values of $T$.
Express $\sigma$ as a composition of the following two permutations:
First permute the values within $\mu$ so that the values used from $T(\mu)$ for $U_i$ appear in the $i^{th}$ column.
As noted earlier, the values within each column are distinct.
Then sort each column to obtain the tabloid $U$.
Hence the permutation $\sigma$ is actually a $\mu$-shuffle of $T$.
Therefore each nonzero term of this iterated Laplace expansion is the monomial $\tau_\sigma$ for a $\mu$-shuffle $\sigma$ of $T$, up to a sign.
The sign is the product of the signs from the Laplace expansion process and the signs from presenting each $\zeta_i \times \zeta_i$ minor as the monomial of a column tabloid.  
If the sign of the diagonal term $|A_1^{<1>}\sqcup N_1| \cdots |A_{\lambda_1}^{<\lambda_1>}\sqcup N_{\lambda_1}|$ of this expansion agrees with the sign of  the monomial $\tau$ of the identity $\mu$-shuffle of $T$, then for each $\mu$-shuffle $\sigma$ of $T$ the sign of the $\tau_\sigma$ term is $\epsilon(\sigma)$.
Otherwise the diagonal term is $-\tau$; then the sign of each $\tau_\sigma$ term is $-\epsilon(\sigma)$.

It is clear that the $\mu$-shuffles of $T$ associated to any two nonzero terms are distinct.
 It is also true that every $\mu$-shuffle of $T$ is associated to one of these terms:
Fix a $\mu$-shuffle $\sigma$ of $T$.
At step $i$ in the iterated Laplace expansion, choose the primary minor to consist of the block $N_i$ together with the columns from the $A^{<i>}_j$ blocks that correspond to the values of $T(\mu)$ that $\sigma$ moves to column $i$ of $T$.
\end{proof}

We will choose regions $\mu$ based on the tabloid $T$ such that we can show $|M_\mu(T)|=0$.
Here the sign ambiguity vanishes, and Lemma~\ref{shuffles} produces relations among the tabloid monomials.
The choice of $\mu$ below yields a well-known ``Pl\"ucker relation".
The presentation of this rederivation prepares the reader for the proof of the new Proposition~\ref{turnb}.

\begin{prop} \label{snake}
 Let $T$ be a tabloid of shape $\lambda$.  Let $1\leq c \leq \lambda_1 -1$ and $1\leq r \leq \zeta_{c+1}$.  Let $\mu \subseteq \lambda$ be the region $\{(i,c)|r\leq i \leq \zeta_c\}\cup\{(j,c+1)| 1\leq j \leq r\}$. 
Then $\sum \epsilon(\sigma)\tau_\sigma=0$, where the sum runs over the $\mu$-shuffles $\sigma$ of $T$.
\end{prop}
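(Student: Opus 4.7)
The plan is to invoke Lemma~\ref{shuffles}, which (up to an overall sign that is irrelevant when the right-hand side is zero) rewrites the desired identity as the vanishing of the polynomial $|M_\mu(T)|$. So the whole task reduces to showing that the compound matrix $M_\mu(T)$, for the specific ``snake-shaped'' region $\mu$ straddling columns $c$ and $c+1$, has determinant zero. The structural feature to exploit is that $\mu$ has $|\mu_c|+|\mu_{c+1}|=(\zeta_c-r+1)+r=\zeta_c+1$ active locations, one more than $\zeta_c = \max(\zeta_c,\zeta_{c+1})$.

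I would proceed by a Laplace-type expansion of $|M_\mu(T)|$ along the set $S$ of inactive columns (those drawn from the $N_j$ and zero blocks). The inactive columns of block-column $j$ have nonzero entries only in block-row $j$, so for a choice of rows $T\subseteq [S_{\text{tot}}]$ with $|T|=|S|$ to give a nonzero sub-determinant $\det M[T,S]$, one must select every row of block-row $j$ for $j\neq c,c+1$, exactly $|\bar\mu_c|=r-1$ rows of block-row $c$, and exactly $|\bar\mu_{c+1}|=\zeta_{c+1}-r$ rows of block-row $c+1$. The complementary block $M[T^c,S^c]$ is then a $(\zeta_c+1)\times(\zeta_c+1)$ matrix whose columns are the active columns from block-columns $c$ and $c+1$ and whose rows are the remaining $\zeta_c-r+1$ positions in block-row $c$ together with the remaining $r$ positions in block-row $c+1$.

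The decisive observation is that for an active column indexed by a value $v$, the entry at ``position $k$ of block-row $c$'' and the entry at ``position $k$ of block-row $c+1$'' are both equal to the indeterminate $x_{v,k}$, because $A_j^{<i>}$ is by definition the same initial segments of rows of $g$ in every block-row. Thus whenever an integer $k$ appears as a row index in both block-row $c$ and block-row $c+1$ of $T^c$, the corresponding two rows of $M[T^c,S^c]$ are identical and the minor vanishes. Since the chosen row indices in block-row $c$ lie in $\{1,\dots,\zeta_c\}$ and those in block-row $c+1$ lie in $\{1,\dots,\zeta_{c+1}\}\subseteq\{1,\dots,\zeta_c\}$ and together number $\zeta_c+1$, pigeonhole forces such a coincidence, so every candidate nonzero term of the Laplace expansion vanishes. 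Hence $|M_\mu(T)|=0$, and Lemma~\ref{shuffles} yields the claimed relation.

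I anticipate the main obstacle to be psychological rather than technical: correctly parsing the block structure of $M_\mu(T)$ to recognize the sparsity of the inactive columns and to see that the active columns are ``uniform'' across block-rows. Once that bookkeeping is in place, the sign tracking is unnecessary (both sides vanish) and the pigeonhole step is immediate from $|\mu|=\zeta_c+1>\zeta_c\geq\zeta_{c+1}$.
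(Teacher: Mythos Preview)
Your argument is correct and rests on the same structural observation as the paper: the active columns of $M_\mu(T)$ carry identical entries across block-rows, while $|\mu|=\zeta_c+1$ exceeds the $\zeta_c$ available ``distinct'' row positions. The execution differs, however. The paper first exploits that $A_j^{\langle i\rangle}$ is empty for $j\notin\{c,c+1\}$ to factor $|M_\mu(T)|=\tau_1\cdots\tau_{c-1}\,\det(*)\,\tau_{c+2}\cdots\tau_{\lambda_1}$, reducing immediately to a $(\zeta_c+\zeta_{c+1})$-square block $*$; it then subtracts the first $\zeta_{c+1}$ rows of $*$ from its last $\zeta_{c+1}$ rows, zeroing the lower active blocks and leaving $\zeta_c+1$ active columns supported on only $\zeta_c$ rows. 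Your route replaces this single row operation with a Laplace expansion along all inactive columns at once, followed by a pigeonhole argument inside each complementary $(\zeta_c+1)\times(\zeta_c+1)$ minor. Both work; the paper's version avoids the case analysis on which rows lie in $T$ (and the attendant rank bounds forcing ``exactly $r-1$'' and ``exactly $\zeta_{c+1}-r$''), while your version has the virtue of foreshadowing the more elaborate Laplace machinery used later in Proposition~\ref{turnb}. One small notational quibble: you reuse $T$ for the row set in the Laplace expansion, which clashes with the tabloid $T$; a different letter would be cleaner.
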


\begin{proof}
Construct the matrix $M_\mu(T)$ as above.  
By Lemma~\ref{shuffles}, it is sufficient show that $|M_\mu(T)|=0$.
Since $\mu$ only has two columns, indexed $c$ and $c+1$, the blocks $A^{<i>}_j$ within $M_\mu(T)$ are empty for $j \neq c$ or $c+1$.
Hence the determinant $|M_\mu(T)|$ simplifies to $\tau_1\dots\tau_{c-1} det(*) \tau_{c+2}\dots\tau_{\lambda_1}$, where $*$ is the $(\zeta_c + \zeta_{c+1})$-square  matrix
\small $\begin{bmatrix} 
A^{<c>}_c \sqcup N_c & A^{<c>}_{c+1} \sqcup 0\\
A^{<c+1>}_c \sqcup 0 & A^{<c+1>}_{c+1} \sqcup N_{c+1}
\end{bmatrix}.$ \normalsize

Note that $\zeta_{c}\geq \zeta_{c+1}$.
Subtract the first $\zeta_{c+1}$ rows of the matrix $*$ from its last $\zeta_{c+1}$ rows to get the matrix\\ 
 \small $\begin{bmatrix} 
A^{<c>}_c \sqcup N_c & A^{<c>}_{c+1} \sqcup 0 \\
0 \sqcup -N^{<c+1>}_c & 0 \sqcup N_{c+1}\end{bmatrix}$\normalsize, where $N_c^{<c+1>}$ is the submatrix of $N_c$ formed by its first $\zeta_{c+1}$ rows.
The determinant is unchanged.
The $|\mu|=\zeta_c+1$ columns \small $\begin{bmatrix} A^{<c>}_c & A^{<c>}_{c+1}\\ 0 &0\end{bmatrix}$\normalsize have only $\zeta_c$ nonzero rows.   
\end{proof}

 Let $U$ and $T$ be column tabloids of the same length.  
If the string of values of $U$ read from top to bottom precedes the string of values of $T$ in lexicographic order, then we define $U \leq T$.
Let $U$ and $T$ be two tabloids of shape $\lambda$. 
 If the string of columns of $U$ read left to right precedes the string of columns of $T$ in lexicographic order, then we define $U \leq T$.  
This is a total order; it extends the partial order $\preceq$ of Section~\ref{SETUP}.

 Our goal is to re-express the monomial of any tabloid $T$ which is not a tableau in terms of monomials of tabloids $U<T$.
The following easy lemma is the first step.

\begin{lemma} \label{sort}
Let $T$ be a tabloid of shape $\lambda$.  Let $U$ be the tabloid obtained by sorting the columns of $T$ of a given length in ascending order according to the total order $\leq$.  Then $U\leq T$, and these tabloids have the same monomial.
\end{lemma}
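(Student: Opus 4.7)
The plan is to verify the two assertions of the lemma independently, each by a short direct observation about the effect of the sorting operation.

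For the equality of monomials, I would start from the factorization $\tau = \tau_1 \tau_2 \cdots \tau_{\lambda_1}$ built into the definition of the monomial of a tabloid: each factor $\tau_c$ is the $\zeta_c$-initial minor determined by the entries of column $c$ of $T$, and in particular depends only on the values in that one column. Rearranging a subset of columns that all have the same length among themselves leaves every factor $\tau_c$ intact (only reindexed) and merely permutes the factors of the product. Commutativity of multiplication in $\mathbb{C}[x_{ij}]$ then gives $\upsilon = \tau$.

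For the inequality $U \leq T$, I would first use the fact that the column lengths $\zeta_1 \geq \zeta_2 \geq \cdots \geq \zeta_{\lambda_1}$ are weakly decreasing to observe that the columns of any fixed length $\ell$ occupy a contiguous block of column indices, say $c_1, \dots, c_2$. Outside this block, every column of $U$ equals the corresponding column of $T$, while inside the block the columns have been rearranged into ascending order under the lex total order $\leq$ on column tabloids of length $\ell$. I would then invoke the standard fact that the ascending sort of a finite sequence is the lexicographically smallest among all its rearrangements. Comparing $T$ and $U$ column-by-column from left to right in the lex order on tabloids, columns $1,\dots,c_1-1$ agree, and the first position of disagreement (if any) lies inside $\{c_1,\dots,c_2\}$, where the corresponding column of $U$ is strictly $<$ that of $T$ in the column lex order. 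Hence $U \leq T$, with equality exactly when the relevant columns of $T$ were already sorted.

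The main obstacle, such as it is, amounts only to pinning down the routine "ascending sort is the lex-minimum rearrangement" statement used in the second step. No sign analysis enters because no entries are ever moved within a column; only whole columns of a common length are swapped, which does not reorder the rows used in any individual minor.
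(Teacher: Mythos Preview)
Your proposal is correct. The paper does not actually supply a proof of this lemma; it is introduced as ``the following easy lemma'' and stated without argument, so your write-up simply fills in the routine details the author left implicit, and does so along exactly the lines one would expect: the monomial is unchanged because it is a product of column minors and permuting equal-length columns only reorders commuting factors, while $U\leq T$ follows because equal-length columns form a contiguous block and the ascending sort of a sequence is its lexicographically least rearrangement.
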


The relations given by Proposition~\ref{snake} are sufficient for the following:

\begin{prop} \label{downstream}
Let $T$ be a tabloid of shape $\lambda$ which is not a tableau.  Then there exist coefficients $a_U=\pm 1$ such that $\tau=\sum a_U \upsilon,$ where the sum is over some tabloids U such that $U<T$.
\end{prop}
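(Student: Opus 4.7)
The plan is to invoke Proposition~\ref{snake} at a location where $T$ violates the row-increasing condition. Since $T$ is not a tableau, there exist indices $1 \leq c \leq \lambda_1 - 1$ and $1 \leq r \leq \zeta_{c+1}$ with $T(r,c) > T(r,c+1)$; note that $(r,c)$ is a valid location because $\zeta_c \geq \zeta_{c+1} \geq r$. Let $\mu$ be the snake region of Proposition~\ref{snake} for this pair. That proposition gives $\sum_\sigma \epsilon(\sigma)\,\tau_\sigma = 0$ summed over $\mu$-shuffles, with the identity shuffle contributing $\pm\tau$. Solving for $\tau$ yields an expression of the form $\tau = \sum \pm\,\tau_\sigma$ with $\pm 1$ coefficients, so it suffices to show $T_\sigma < T$ for every non-identity $\mu$-shuffle $\sigma$.

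Because $\mu$ is disjoint from columns $1,\dots,c-1$, those columns of $T$ and $T_\sigma$ coincide, so under the lex order on tabloids it is enough to show that column $c$ of $T_\sigma$ is lex-strictly smaller than column $c$ of $T$. Column $c$ of $T_\sigma$ is the ascending sort of $A \cup S$, where $A := \{T(1,c),\dots,T(r-1,c)\}$ is the part of column $c$ lying outside $\mu$ and $S$ is the set of $\zeta_c - r + 1$ values that the shuffle places into the $\mu$-portion of column $c$. For the identity shuffle $S$ equals $S_0 := \{T(r,c),\dots,T(\zeta_c,c)\}$, recovering column $c$ of $T$; for any non-identity shuffle $S$ is obtained from $S_0$ by replacing at least one $T(i,c)$ (with $r \leq i \leq \zeta_c$) by some $T(j,c+1)$ (with $1 \leq j \leq r$).

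The crucial observation is the chain
\[
T(j,c+1) \leq T(r,c+1) < T(r,c) \leq T(i,c),
\]
valid for all $1 \leq j \leq r \leq i \leq \zeta_c$, where the strict middle inequality is our chosen failure of the row condition and the outer two use strict increase down the columns of $T$. It says every value moved into $S$ from column $c+1$ is strictly less than $\min S_0$, hence strictly less than every value it displaces. A direct comparison of the sorted sequences then shows that the $\ell$-th smallest entry of $S$ is at most the $\ell$-th smallest entry of $S_0$ for every $\ell$, with strict inequality at $\ell=1$. The elementary monotonicity of sorted merges under union with a fixed multiset promotes this to the statement that the sorted merge of $A \cup S$ is coordinate-wise at most the sorted merge of $A \cup S_0$, and strictly so at some coordinate. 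Thus column $c$ of $T_\sigma$ precedes column $c$ of $T$ in the lex order, and hence $T_\sigma < T$.

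The main obstacle is the last ``merging with $A$'' step: values moved in from column $c+1$ can interleave arbitrarily with the fixed prefix $A$ after sorting, so the position at which the strict drop appears depends on $\sigma$ and cannot be pinned down in advance. The cleanest route is to work purely at the level of sorted multisets and invoke monotonicity of sorted merges under union, rather than to try to track positions individually; the strict inequality at $\ell=1$ between sorted $S$ and sorted $S_0$ then propagates to some (possibly shifted) coordinate after merging with $A$.
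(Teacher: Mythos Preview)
Your proof is correct and follows essentially the same route as the paper: locate a row violation $T(r,c)>T(r,c+1)$, apply the snake relation of Proposition~\ref{snake}, and show every non-identity $\mu$-shuffle produces $T_\sigma<T$ by comparing column $c$. The paper precedes this with an optional column-sorting step via Lemma~\ref{sort} (handling unsorted columns as a trivial single-term case), which you skip; this is harmless since a row violation exists in any tabloid that is not a tableau. Your coordinatewise argument via monotonicity of sorted merges is more explicit than the paper's terse ``in the highest location of column $c$ in which $T$ and $T_\sigma$ differ, the smaller value is in $T_\sigma$,'' but proves the same thing.

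One small omission: to conclude that the coefficients $a_U$ are genuinely $\pm1$ when you re-index the sum by tabloids $U$ rather than by shuffles $\sigma$, you need that distinct non-identity $\mu$-shuffles yield distinct tabloids $T_\sigma$. This follows immediately from your chain inequality $T(j,c+1)<T(r,c)\le T(i,c)$, which shows that the $\zeta_c+1$ values in $T(\mu)$ are all distinct; hence each $T_\sigma$ determines $\sigma$. The paper makes exactly this observation in its final sentence, and you should add a line to the same effect.
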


\begin{proof}
If $T$ does not already have its columns sorted by the total order, apply Lemma~\ref{sort} to get $\tau=\upsilon$ where $\upsilon$ is the monomial of a tabloid $U<T$.

 Now suppose $T$ has sorted columns.
Since $T$ is not a tableau, there exists a location $(r,c) \in \lambda$ such that $T(r,c)>T(r,c+1)$.  
With this $c$ and $r$, take $\mu$ as in Proposition~\ref{snake}.
Then the relation $\sum \epsilon(\sigma)\tau_\sigma= 0$ holds, where the sum runs over all $\mu$-shuffles of $T$.
We can solve for $\tau=\tau_{id}$ to obtain $\tau=-\sum \epsilon(\sigma)\tau_\sigma$, where the sum now runs over all non-identity $\mu$-shuffles of $T$.

 Consider a non-identity $\mu$-shuffle $\sigma$ of $T$.  
We show $T_\sigma < T$.
By the column filling property of tabloids and our choice of $\mu$, every value in $T_c(\mu)$ is larger than every value in $T_{c+1}(\mu)$.
Since $\sigma \neq id$, it replaces some value in $T_c(\mu)$ with a value from $T_{c+1}(\mu)$ and then sorts column $c$.
Then in the highest location of column $c$ in which $T$ and $T_\sigma$ differ, the smaller value is in $T_\sigma$.
Therefore column $c$ of $T_\sigma$ precedes $T_c$ in the total order, while all columns to its left are unchanged.
Thus  $T_\sigma<T$.

  Since $\mu$ does not have repeated values, each tabloid $T_\sigma$ is distinct.
Therefore when we sum over tabloids instead of $\mu$-shuffles of $T$, the coefficients of the monomials remain $\pm1$.
\end{proof}

We are ready to prove the spanning part of Theorem \ref{tableaux}.
Elements of this proof reappear in the spanning proof for Theorem~\ref{main}. 

\begin{theorem}\label{span 1}
Let $\lambda$ be an $n$-partition.  The monomials of the semistandard tableaux of shape $\lambda$ span the vector space $\Gamma_\lambda$.
\end{theorem}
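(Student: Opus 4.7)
The plan is a short descending induction on the total order $\leq$ defined earlier in this section, with Proposition~\ref{downstream} doing essentially all of the work. Since $\Gamma_\lambda$ is by definition spanned by the monomials of \emph{all} tabloids of shape $\lambda$, it suffices to show that the monomial $\tau$ of an arbitrary tabloid $T$ of shape $\lambda$ is a $\mathbb{C}$-linear combination of tableau monomials.

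First I would record the preliminary observation that the collection of tabloids of shape $\lambda$ with entries in $[n]$ is finite, so any strictly descending chain $T > T_1 > T_2 > \cdots$ in the total order $\leq$ must terminate. Equivalently, $\leq$ restricts to a well-order on this finite set, which will legitimize the induction and guarantee that the straightening procedure halts after finitely many applications of Proposition~\ref{downstream}.

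Next I would carry out strong induction on $T$ with respect to $\leq$. If $T$ is a semistandard tableau, then $\tau$ is already a tableau monomial and there is nothing to do; this handles in particular the $\leq$-minimal tabloid. For the inductive step, suppose $T$ is not a tableau and that every tabloid $U < T$ has its monomial $\upsilon$ expressible as a $\mathbb{C}$-linear combination of tableau monomials. By Proposition~\ref{downstream} there exist coefficients $a_U = \pm 1$ such that $\tau = \sum a_U \upsilon$, the sum running over certain tabloids $U$ with $U < T$. Substituting the inductive expression for each $\upsilon$ yields $\tau$ as a $\mathbb{C}$-linear combination of tableau monomials, completing the step.

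The main obstacle in this spanning result has in fact already been overcome, since Proposition~\ref{downstream} (which rests on the Pl\"ucker-type relations produced by Proposition~\ref{snake} and the master identity Lemma~\ref{shuffles}) supplies the crucial replacement of a non-tableau monomial by strictly $\leq$-smaller ones. What remains at this stage is purely formal: verify that the induction is well-founded (clear from finiteness) and assemble the pieces. The same schematic strategy---total-order induction together with a master determinantal identity---will reappear for the harder Theorem~\ref{span 2}, where the additional challenge is to control whether the $U$ produced by straightening are still $\pi$-Demazure.
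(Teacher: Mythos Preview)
Your proof is correct and is essentially the same as the paper's: both arguments invoke Proposition~\ref{downstream} to rewrite a non-tableau monomial in terms of strictly $\leq$-smaller ones and use finiteness of the set of tabloids to guarantee termination. The only cosmetic difference is that the paper phrases this as an iterative straightening algorithm (repeatedly applying Proposition~\ref{downstream} to the $\leq$-largest non-tableau appearing) while you package the same descent as strong induction on~$\leq$.
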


\begin{proof}
The space $\Gamma_\lambda$ was defined to be the span of all tabloid monomials.
Given a tabloid $U$,  we show that its monomial $\upsilon$ is in the span of the tableau monomials.  
Suppose that $U$ is not already a tableau.

 Apply Proposition~\ref{downstream} to express $\upsilon$ as a linear combination of monomials of tabloids preceding $U$.
If any of these tabloids is not a tableau, apply Proposition~\ref{downstream} to the largest among them according to the total order $\leq$ and iterate this step.
After each iteration, the largest tabloid which is not a tableau that appears precedes that of the previous iteration.
Since there are finitely many tabloids of shape $\lambda$, this process must terminate.
When the process terminates, we have an expression for $\upsilon$ as a linear combination of tableau monomials.
\end{proof}

\section{Demazure monomials span the Demazure quotient} \label{SPAN2}

This section is a continuation of Section~\ref{SPAN1}.
Returning to the context at the end of Section~\ref{DEFS}, again fix an $n$-partition $\lambda$ with $Q(\lambda) \subseteq Q$ and a $Q$-chain $\pi$.
By Theorem~\ref{span 1}, the space $\Gamma_\lambda(\pi):=\Gamma_\lambda/Z_\lambda(\pi)$ is spanned by the residues of tableau monomials.
Here we re-express the (residue) monomial in $\Gamma_\lambda(\pi)$ of any tableau which is not $\pi$-Demazure by choosing an appropriate region $\mu$ for an application of Lemma~\ref{shuffles}.
We write a bar over a polynomial of $\Gamma_\lambda$ to indicate its residue in $\Gamma_\lambda(\pi)$.
Proposition~\ref{turnb}, Proposition~\ref{downstream 2}, and Theorem~\ref{span 2} are respectively analogous to Proposition~\ref{snake}, Proposition~\ref{downstream}, and Theorem~\ref{span 1}.

 If a tableau $T$ is not $\pi$-Demazure, then there exists a location $(r,c) \in \lambda$ such that $S(T)[r,c]>Y_\lambda(\pi)[r,c]$.
Our region $\mu$ will consist of locations that are associated to each of the locations  $(r,c), (r+1, c), \dots, (\zeta_c, c)$ in column $c$: These $\zeta_c-r+1$ other locations will be indexed by $r, r+1, \dots, \zeta_c$.
The last value of the scanning path $P(T;r,c)$ is $S(T)[r,c]$.
Define $(p_r, b_r)$ to be the location of  the first value in the path $P(T;r,c)$ which is larger than $Y_\lambda(\pi)[r,c]$.
If $(r,c)$ is a column bottom, then take $\mu$ to be the region $\{(p_r,b_r)\}$.
Otherwise do the following:
By Lemma \ref{scan path}, there exists at least one location $(u,v)$ in $P(T;r+1,c)$ such that  $u \leq b_r$ and $T(u,v)>T(p_r, b_r)$.  
Define $(p_{r+1},b_{r+1})$ to be the first such location in $P(T;r+1,c)$.
Continue in this fashion until $(p_{\zeta_c}, b_{\zeta_c})$ in $P(T; \zeta_c, c)$ has been defined: we have found locations $(p_r, b_r),(p_{r+1}, b_{r+1}) , \dots ,(p_{\zeta_c}, b_{\zeta_c})$ with $b_{r}\geq b_{r+1} \geq \dots \geq b_{\zeta_c}\geq c$ and $Y_\lambda(\pi;r,c)<T(p_r, b_r)< T(p_{r+1}, b_{r+1})< \dots < T(p_{\zeta_c}, b_{\zeta_c})$.
Take $\mu$ to be the region $\{ (p_r, b_r) , \dots , (p_{\zeta_c}, b_{\zeta_c}) \}$.

\begin{prop} \label{turnb}
Let $\pi$ be a $Q$-chain.
Let $T$ be a tableau of shape $\lambda$ which is not $\pi$-Demazure.
Let $\mu\subseteq \lambda$ be the region just defined.  Then $\sum \epsilon(\sigma)\overline \tau_\sigma~=~0$ in $\Gamma_\lambda(\pi)$, where the sum runs over the $\mu$-shuffles $\sigma$ of $T$.
\end{prop}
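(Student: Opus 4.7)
My plan is to apply Lemma~\ref{shuffles} to identify $\sum \epsilon(\sigma)\tau_\sigma = \pm |M_\mu(T)|$ in $\Gamma_\lambda$, and then to show that the polynomial $|M_\mu(T)|$ lies in $Z_\lambda(\pi)$. Taking residues in $\Gamma_\lambda(\pi)$ then yields the desired identity. Because the region $\mu$ here is scattered across several columns of $\lambda$ rather than confined to two consecutive columns, the direct row-dependence argument used in Proposition~\ref{snake} is no longer available, and a more refined evaluation of $|M_\mu(T)|$ is required.

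The main algebraic tool for that evaluation is Turnbull's identity \cite{Turnbull}~(III.11), already cited by the paper as the master determinantal identity underlying Lemma~\ref{shuffles} and used for the analogous step of \cite{RS2}. Viewed as an alternative iterated Laplace expansion of $|M_\mu(T)|$, it expresses the determinant as a signed sum of tabloid monomials $\upsilon$, but with the values sitting at the $\zeta_c - r + 1$ positions of $\mu$ redistributed in a controlled way: they are funneled into column $c$ of each resulting tabloid $U$. Since by construction every value at a position of $\mu$ strictly exceeds $Y_\lambda(\pi)[r,c]$, the ascending sort forced by the tabloid condition in column $c$ places the $\zeta_c - r + 1$ largest values into rows $r, r+1, \ldots, \zeta_c$ of that column. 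In particular, $U(r,c) > Y_\lambda(\pi)[r,c]$, hence $U \not\preceq Y_\lambda(\pi)$ and $\upsilon \in Z_\lambda(\pi)$. Summing over all branches of the expansion then gives $|M_\mu(T)| \in Z_\lambda(\pi)$, as required.

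The main obstacle is carrying out Turnbull's alternative expansion rigorously at the level of the compound matrix $M_\mu(T)$ and verifying that every surviving branch actually funnels all $\zeta_c - r + 1$ values of $\mu$ into column $c$ of $U$. Here the scanning-path pedigree of $\mu$ is essential: the chain $T(p_r, b_r) < T(p_{r+1}, b_{r+1}) < \cdots < T(p_{\zeta_c}, b_{\zeta_c})$ together with $b_r \geq b_{r+1} \geq \cdots \geq b_{\zeta_c} \geq c$ is precisely what makes the combinatorial bookkeeping succeed, in analogy with the complete-flag case treated in \cite{RS2}. Once this step is established, sign-tracking and the passage to residues in $\Gamma_\lambda(\pi)$ follow routinely from the structure of the proof of Lemma~\ref{shuffles}.
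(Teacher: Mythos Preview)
Your proposal is correct and matches the paper's approach: reduce via Lemma~\ref{shuffles} to showing $|M_\mu(T)|\in Z_\lambda(\pi)$, then expand so that every surviving term funnels all of $T(\mu)$ into column $c$ of the resulting tabloid, forcing $U(r,c)>Y_\lambda(\pi)[r,c]$. The paper carries out the ``alternative expansion'' concretely rather than by citing Turnbull---after factoring out $\tau_1\cdots\tau_{c-1}$, it subtracts the first $\zeta_i$ rows of the remaining block from each later block row to annihilate the off-diagonal $A$ parts, then Laplace-expands on the first $\zeta_c$ rows, which forces all $|\mu|=\zeta_c-r+1$ of the $A^{<c>}_j$ columns into the primary minor; the only features of $\mu$ actually used are that it lies in columns $\geq c$, has size $\zeta_c-r+1$, and has distinct $T$-values all exceeding $Y_\lambda(\pi)[r,c]$, so the detailed scanning-path chain you anticipate is not needed for the bookkeeping.
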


\begin{proof}
Construct the matrix $M_\mu(T)$ as in Section~\ref{SPAN1}.  
By Lemma~\ref{shuffles}, it is sufficient show $\overline{|M_\mu(T)|}=0$ in $\Gamma_\lambda(\pi)$.
We refer to the definitions above pertaining to the region $\mu$.
Since the leftmost column of $\mu$ is column $b_{\zeta_c}\geq c$, the determinant $|M_\mu(T)|$ simplifies to $ \tau_1\dots \tau_{c-1}det(*)$ where $*$ is the lower right $(\zeta_c+\cdots + \zeta_{\lambda_1})$-square submatrix: $$ \begin{bmatrix}
A^{<c>}_c \sqcup N_c & A^{<c>}_{c+1} \sqcup 0 &\cdots & A^{<c>}_{\lambda_1} \sqcup 0\\
A^{<c+1>}_c \sqcup 0 &  A^{<c+1>}_{c+1} \sqcup N_{c+1} & \cdots & A^{<c+1>}_{\lambda_1} \sqcup 0\\
\vdots & \vdots & &  \vdots\\
A^{<\lambda_1>}_c \sqcup 0 &  A^{<\lambda_1>}_{c+1} \sqcup 0 & \cdots &A^{<\lambda_1>}_{\lambda_1} \sqcup N_{\lambda_1}\end{bmatrix}.$$
Because $S(T)$ fails to be dominated by $Y_\lambda(\pi)$ in column $c$, the index $c$ is emphasized over the index $b_{\zeta_c}$.

For $1\leq  j \leq i  \leq \lambda_1$, let $N^{<i>}_j$ denote the submatrix of $N_j$ formed by its first $\zeta_i$ rows.
 For each $c+1\leq i \leq \lambda_1$, the first $\zeta_i$ rows of $*$ are contained in its first row of blocks.
Subtract these rows from its $(i+1-c)^{th}$ row of blocks to get the matrix  $$*':=\begin{bmatrix}
A^{<c>}_c \sqcup N_c &  A^{<c>}_{c+1} \sqcup 0 & \cdots & A^{<c>}_{\lambda_1} \sqcup 0 \\
0 \sqcup -N^{<c+1>}_c & 0 \sqcup N_{c+1} & \cdots & 0\sqcup 0 \\
\vdots & \vdots &  & \vdots \\
0 \sqcup -N^{<\lambda_1>}_c & 0 \sqcup 0 & \cdots & 0\sqcup N_{\lambda_1}\end{bmatrix}.$$
We calculate $det(*')=det(*)$ by Laplace expansion on the first $\zeta_c$ rows, which form its first row of blocks.\\
  Most terms in the expansion vanish.
Fix a term such that neither the primary nor complementary minor has a zero column and the primary minor has no repeated columns.
Since the complementary minor cannot have a zero column, the $\zeta_c \times \zeta_c$ primary minor must use all of the columns of the blocks $A^{<c>}_c,\dots,A^{<c>}_{\lambda_1}$:
 These blocks $A^{<c>}_c,\dots,A^{<c>}_{\lambda_1}$ have a total of only $|\mu|=\zeta_c-r+1$ columns.
Since the primary minor does not have repeated columns, the values of $T_c(\bar \mu)$ that correspond to the $r-1$ columns of $N_c$ that contribute to the primary minor are distinct from the values of $T(\mu)$.
 Therefore we may define a column tabloid $U_c$ of size $\zeta_c$ containing all the values of $T(\mu)$ and these values from $T_c(\bar\mu)$.
Except for the order of its columns, the primary minor is the determinant of the $\zeta_c$-initial submatrix of $g$ specified by the column taboid $U_c$.
So this primary minor is the monomial $\upsilon_c$ of $U_c$, up to a sign.
By the choice of $\mu$, the $\zeta_c-r+1$ values of $T(\mu)$ are all larger than $Y_\lambda(\pi;r,c)$.  
Hence at most $r-1$ values in $U_c$ are less than or equal to $Y_\lambda(\pi;r,c)$.
In particular $U_c(r)>Y_\lambda(\pi;r,c)$.

 Now consider the complementary minor of our fixed summand.
We compute this minor by an iterated Laplace expansion analogous to the one in the proof of Lemma~\ref{shuffles}.
Begin the iteration by computing this determinant by Laplace expansion on its first $\zeta_{c+1}$ rows, which form its first row of blocks.
Fix a summand as above.
Since the current complementary minor cannot have a zero column, this primary minor must include all of the columns of $N_{c+1}$.
The primary minor's other columns come from the $-N^{<c+1>}_c$ block.
Define a column tabloid $U_{c+1}$ of length $\zeta_{c+1}$  filled with all the values of $T_{c+1}(\bar\mu)$ and the values of $T_c(\bar \mu)$ that correspond to the columns of $-N^{<c+1>}_c$ that contribute to the primary minor.
Again the primary minor is the monomial $\upsilon_{c+1}$ of $U_{c+1}$, up to a sign.

 Iterate this process. 
We end up with column tabloids $U_c, U_{c+1}, \dots, U_{\lambda_1}$ of respective lengths $\zeta_c, \zeta_{c+1}, \dots, \zeta_{\lambda_1}$.
We find that our fixed nonzero term is the product $\tau_1\cdots \tau_{c-1} \upsilon_c \cdots \upsilon_{\lambda_1}$, up to a sign.
This is the monomial $\upsilon$ of the tabloid $U$ that is formed by the juxtaposition of the column tabloids $T_1, \dots, T_{c-1}, U_c, \dots, U_{\lambda_1}$.
But $U\not\preceq Y_\lambda(\pi)$, by the observation about $U(r,c)=U_c(r)$ above.
Hence its monomial $\upsilon$ belongs to the subspace $Z_\lambda(\pi)$ of $\Gamma_\lambda$.
Therefore $\overline \upsilon=0$ in $\Gamma_\lambda(\pi)$.
So all of the terms in the iterated Laplace expansion that are not zero in $\Gamma_\lambda$ are in $Z_\lambda(\pi)$.
Hence $\overline{|M_\mu(T)|}=0$.
\end{proof}

 We now show that this result can be used to re-express the monomial of a tableau $T$ which is not $\pi$-Demazure in terms of monomials of tabloids $U<T$.

\begin{prop} \label{downstream 2}
Let $\pi$ be a $Q$-chain.
Let $T$ be a tableau of shape $\lambda$ which is not $\pi$-Demazure.  If $\overline \tau \neq 0$ in $\Gamma_\lambda(\pi)$, then there exist coefficients $a_U=\pm1$ such that $\overline \tau= \sum a_U \overline \upsilon,$
 where the sum is over some tabloids $U$ such that $U<T$.
\end{prop}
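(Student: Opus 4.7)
The plan is to parallel the proof of Proposition~\ref{downstream}, substituting the stronger master identity of Proposition~\ref{turnb} for Proposition~\ref{snake}. First I would apply Proposition~\ref{turnb} to the region $\mu$ specified there, obtaining $\sum_\sigma \epsilon(\sigma)\overline \tau_\sigma = 0$ in $\Gamma_\lambda(\pi)$, isolate the identity shuffle (which contributes $+\overline \tau$ with sign $+1$), and solve to get $\overline \tau = -\sum_{\sigma \neq id} \epsilon(\sigma)\overline \tau_\sigma$. Any term with $T_\sigma \not\preceq Y_\lambda(\pi)$ is already $0$ in $\Gamma_\lambda(\pi)$ and drops out; the hypothesis $\overline \tau \neq 0$ is what ensures the surviving sum is not empty.

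The heart of the argument is to show that every non-identity $\mu$-shuffle $\sigma$ satisfies $T_\sigma < T$ in the total order on tabloids. The crucial structural feature of our region is that the values $T(p_r, b_r) < T(p_{r+1}, b_{r+1}) < \cdots < T(p_{\zeta_c}, b_{\zeta_c})$ strictly increase with the index $i$ while the column indices $b_i$ weakly decrease. Hence for any two distinct columns of $\mu$, every value of $T(\mu)$ in the more leftward column strictly exceeds every value of $T(\mu)$ in the more rightward column. Given a non-identity $\sigma$, let $d$ denote the leftmost column whose multiset of values is altered. Any value that $\sigma$ moves into column $d$ must come from a column strictly to the right of $d$ (otherwise a column earlier than $d$ would already have changed), and thus is strictly smaller than every value originally in $\mu \cap$ column $d$. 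A short position-by-position comparison of the two sorted versions of column $d$---using that replacing a subset of sorted values by strictly smaller ones produces a list that precedes the original lexicographically---then shows that column $d$ of $T_\sigma$ lexicographically precedes column $d$ of $T$, while all columns to its left are unchanged. Therefore $T_\sigma < T$.

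To secure the $\pm 1$ coefficient claim, I would observe that the values of $T$ within $\mu$ are all distinct (they form a strictly increasing sequence), so distinct $\mu$-shuffles produce distinct tabloids $T_\sigma$; reindexing the sum by the distinct $U = T_\sigma$ introduces no cancellations, and the surviving coefficients $-\epsilon(\sigma) = \pm 1$ remain. I expect the main obstacle to be the second step: unlike the two-column region of Proposition~\ref{snake}, our $\mu$ can span many columns with multiple locations per column, so more bookkeeping is needed to identify the leftmost changing column of $T_\sigma$ and to verify that the sorted-column comparison goes in the correct direction.
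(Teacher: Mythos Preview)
Your proposal is correct and follows essentially the same line as the paper's proof: apply Proposition~\ref{turnb}, isolate the identity shuffle, use the structural feature of $\mu$ (the values strictly increase while the column indices weakly decrease, so each column of $\mu$ dominates every column to its right) to conclude $T_\sigma < T$ for every non-identity shuffle, and invoke the distinctness of the $\mu$-values to keep the coefficients $\pm 1$. The only cosmetic difference is that the paper does not bother to discard the terms with $T_\sigma \not\preceq Y_\lambda(\pi)$, since the conclusion allows summand monomials that happen to vanish in $\Gamma_\lambda(\pi)$.
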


\begin{proof}
Since $T$ is not  $\pi$-Demazure, there exists a region $\mu$ as for Proposition~\ref{turnb}.
Then the relation $\sum \epsilon(\sigma)\overline\tau_\sigma= 0$ holds in $\Gamma_\lambda(\pi)$, where the sum runs over all $\mu$-shuffles of $T$.

 If the identity permutation is the only $\mu$-shuffle of $T$, then this equation states that $\overline\tau=0$ in $\Gamma_\lambda(\pi)$.
Otherwise, solve for $\overline\tau = \overline \tau_{id}$.
Consider a non-identity $\mu$-shuffle $\sigma$ of $T$.  
We show $T_\sigma < T$.
Let $b$ be the index of the leftmost column of $\lambda$ such that $\sigma$ replaces some of the values of $T_b(\mu)$.
The replacement values must arrive from later columns of $\mu$.
By our choice of $\mu$, each value in $T_b(\mu)$ is strictly larger than all of the values in the later columns of $\mu$. 
Then in the highest location of column $b$ in which $T$ and $T_\sigma$ differ, the smaller value is in $T_\sigma$.
Therefore column $b$ of $T_\sigma$ precedes $T_b$, while all columns to its left are unchanged.
Thus $T_\sigma<T$.
Since $\mu$ does not have repeated values, each tabloid $T_\sigma$ is distinct. 
Therefore when we sum over tabloids instead of $\mu$-shuffles of $T$, the coefficients of the monomials remain $\pm1$.
\end{proof}

Now we are ready to prove the spanning part of Theorem \ref{main}.

\begin{theorem} \label{span 2}
Fix a nonempty $Q \subseteq [n-1]$.  Let  $\lambda$ be an $n$-partition with $Q(\lambda)\subseteq Q$ and let $\pi$ be a $Q$-chain.  The monomials of the $\pi$-Demazure tableaux of shape $\lambda$ span the  vector space $\Gamma_\lambda(\pi)$.
\end{theorem}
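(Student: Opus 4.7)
The plan is to combine Proposition \ref{downstream 2} (which handles non-$\pi$-Demazure tableaux) with the tableau-straightening procedure from the proof of Theorem \ref{span 1} (which turns non-tableau tabloids into tableaux), and to iterate until every tableau appearing in the expression is $\pi$-Demazure. By Theorem \ref{span 1}, the residues of tableau monomials span $\Gamma_\lambda(\pi)$, so it suffices to rewrite the residue $\overline{\tau}$ of each tableau $T$ as a linear combination of $\pi$-Demazure tableau monomials.

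Fix a tableau $T$ of shape $\lambda$. If $T$ is already $\pi$-Demazure, nothing is to be done. Otherwise, apply Proposition \ref{downstream 2}: either $\overline{\tau} = 0$ (in which case we drop it), or we obtain coefficients $a_U = \pm 1$ and tabloids $U < T$ in the total order $\leq$ with $\overline{\tau} = \sum a_U \overline{\upsilon}$. For each tabloid $U$ in this sum that fails to be a tableau, apply the inner straightening from the proof of Theorem \ref{span 1}: iteratively invoke Proposition \ref{downstream} on the largest non-tableau present. Since there are finitely many tabloids of shape $\lambda$, this inner process terminates, and $\overline{\upsilon}$ becomes a linear combination of tableau monomials indexed by tableaux strictly below $U$ in $\leq$. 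Substituting back, we obtain $\overline{\tau}$ as a linear combination of tableau monomials $\overline{\tau'}$ with each $T' < T$.

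Now iterate this outer step. At each stage, locate the largest (in $\leq$) non-$\pi$-Demazure tableau appearing in the current expression, and apply the two-layer replacement above to its monomial. Every tableau introduced is strictly below the one being replaced, so the largest non-$\pi$-Demazure tableau present strictly decreases in the total order with each outer iteration. Because $\leq$ is a total order on the finite set of tableaux of shape $\lambda$, this process terminates, at which point only $\pi$-Demazure tableau monomials remain. This exhibits $\overline{\tau}$ as a linear combination of $\pi$-Demazure tableau monomials.

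The main obstacle is keeping control of the strict decrease in $\leq$ across both layers of straightening: the outer replacement via Proposition \ref{downstream 2} produces only tabloids $< T$, and the inner straightening via Proposition \ref{downstream} never lifts any resulting tableau back up to or above $T$. Propositions \ref{downstream} and \ref{downstream 2} were designed with exactly this monotonicity in mind, so the termination argument reduces to observing that $\leq$ totally orders a finite set.
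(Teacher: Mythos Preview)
Your proof is correct and follows essentially the same approach as the paper's own proof: reduce to tableau monomials via Theorem~\ref{span 1}, then alternate Proposition~\ref{downstream 2} (on the largest non-$\pi$-Demazure tableau) with repeated applications of Proposition~\ref{downstream} (to restore the tableau property), using the total order $\leq$ on the finite set of tabloids to guarantee termination. The only cosmetic difference is that you frame the argument as a two-layer ``outer/inner'' iteration, whereas the paper phrases it as a single interleaved loop; the content is identical.
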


\begin{proof}
By Theorem~\ref{span 1}, the space $\Gamma_\lambda(\pi)$ is spanned by the tableau monomials.
Given a tableau $U$, we show its monomial $\overline \upsilon$ is in the span of the $\pi$-Demazure monomials.
Suppose that $U$ is not already $\pi$-Demazure.

 Apply Proposition~\ref{downstream 2} to express $\overline\upsilon$ as a linear combination of monomials for tabloids preceding $U$.
If any of these tabloids is not a tableau, apply Proposition~\ref{downstream} to the largest among them according to the total order $\leq$ and iterate this step.
As in Theorem~\ref{span 1}, this process terminates.
We have now expressed $\overline \upsilon$ as a linear combination of monomials of tableaux preceding $U$.
If any of the tableaux is not $\pi$-Demazure, apply Proposition~\ref{downstream 2} to the largest among them and then repeatedly apply Proposition~\ref{downstream} to the resulting tabloids.
After each iteration of Propositions \ref{downstream 2} and \ref{downstream}, the largest tabloid which is not a $\pi$-Demazure tableau precedes that appearing in the previous iteration.
Since there are finitely many tabloids of shape $\lambda$, this process must terminate.
When the process terminates, we have an expression for $\overline\upsilon$ as a linear combination of $\pi$-Demazure monomials.
\end{proof}

\section{Preferred bases and Bruhat cells}\label{PREF}

The linear independence of tableau monomials for Theorem~\ref{tableaux} is shown directly in \cite{MS} by organizing the leading terms of tableau monomials with respect to an order on the indeterminants $x_{ij}$ for $\mathbb{C}[x_{ij}]$.
No similarly direct proof for the linear independence part of Theorem~\ref{main} is known.
Instead we assume that our field has characteristic zero and, following \cite{LMS,RS2}, we evaluate a linear combination of monomials at some ordered basis to verify that it is nonzero.
We analyze these evaluations based on the membership of the corresponding $Q$-flag in a Bruhat cell  or a Schubert variety.
So here and in the next section we return to the context of Section~\ref{DEFS} and present the standard facts concerning tabloid monomials, Bruhat cells, and Schubert varieties.
A statement in these sections is displayed as a ``Lemma" if it is needed for Section~\ref{LIND} and as a ``Fact" if it is included only for motivation.
The $n$-partition $\lambda$ plays no role in this section.

Recall that the \emph{$Q$-carrels} for an $n$-tuple are the following $k+1$ sets of positions:  the first $q_1$ positions, the next $q_2-q_1$ positions, and so on through the last $n-q_k$ positions.
Given an ordered basis $f=[v_1, v_2, \dots, v_n]$ of $\mathbb{C}^n$ in matrix form, the formation of the $Q$-flag $\Phi_Q(f)=(V_1, \dots, V_k)$ can be viewed using these $Q$-carrels:
The vectors from the first $Q$-carrel of $f$ span $V_1$, and for $2\leq j \leq k$ the vectors from the $j^{th}$ $Q$-carrel of $f$ extend $V_{j-1}$ to the space $V_j$.
The \emph{pivot} of a nonzero column vector is its last nonzero coordinate.

\newpage

\begin{defn}
An ordered basis $f$ is \emph{$Q$-preferred} if:
\begin{enumerate}
\item Within a $Q$-carrel, the pivots descend from left to right.
\item Each vector $v\in f$ has a value of 1 in its pivot coordinate.
\item All of the coordinate values to the right of a pivot are 0.
\end{enumerate}
\end{defn}

The pivots of a $Q$-preferred basis give information concerning its $Q$-flag:

\begin{lemma}\label{chain}
Let $f=[v_1, \dots, v_n]$ be a $Q$-preferred basis with pivot coordinates $\rho_1, \dots, \rho_n$.
For each $1\leq j \leq k$, define $R_j:=\{\rho_1, \dots, \rho_{q_j}\}$.
Then $\rho= (R_1, \dots ,R_k)$ is a $Q$-chain.
The list $(\rho_1, \dots, \rho_n)$ is the $Q$-permutation $\overline \rho$.
The set $R_j$ is the set of possible pivot coordinates for vectors in $V_j:=$span$(v_1, \dots, v_{q_j})$.
\end{lemma}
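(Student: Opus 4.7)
The plan is to establish the three assertions in sequence, using the defining properties (1)--(3) of a $Q$-preferred basis together with the fact that $f$ is a basis.

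First I will verify that the pivot coordinates $\rho_1,\dots,\rho_n$ are pairwise distinct. Distinctness of pivots within a single $Q$-carrel is immediate from property (1). For two indices $i<j$ (in the same or different carrels) with $\rho_i=\rho_j$, property (2) forces the value of $v_j$ at row $\rho_j$ to be $1$, while property (3) applied to the pivot of $v_i$ forces the value of $v_j$ at row $\rho_i$ to be $0$ since $j>i$, yielding a contradiction. Global distinctness then makes each $R_j$ a set of cardinality $q_j$, while the nesting $R_1\subset R_2\subset\cdots\subset R_k$ is transparent from the definition, so $\rho=(R_1,\dots,R_k)$ is a $Q$-chain.

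Second, I will compare $(\rho_1,\dots,\rho_n)$ with $\overline\rho$ one carrel at a time. By construction the $j$-th carrel of $(\rho_1,\dots,\rho_n)$ lists the set $R_j\setminus R_{j-1}$ (with the conventions $R_0:=\emptyset$ and $R_{k+1}:=[n]$), which is precisely the set that the $j$-th carrel of $\overline\rho$ lists by definition. Reading property (1) as saying that pivots move down the matrix as the column index grows within a carrel, both tuples list these elements in increasing order of row, so they coincide.

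Third, for the pivot-coordinate characterization of $V_j$, the inclusion $R_j\subseteq\{\text{pivots of nonzero vectors in }V_j\}$ is immediate, since each $\rho_i$ with $i\leq q_j$ is the pivot of $v_i\in V_j$. For the converse, given a nonzero $w=\sum_{i\leq q_j}c_iv_i$, I let $i_0$ be the index with $c_{i_0}\neq 0$ whose pivot row $\rho_{i_0}$ is largest. By the distinctness established in Step~1 together with maximality of $\rho_{i_0}$, every other index $i$ with $c_i\neq 0$ satisfies $\rho_i<\rho_{i_0}$; by definition of pivot the vector $v_i$ then vanishes at row $\rho_{i_0}$ and at all rows below. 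Hence the value of $w$ at row $\rho_{i_0}$ is $c_{i_0}\neq 0$ while all later rows of $w$ are zero, so $w$ has pivot $\rho_{i_0}\in R_j$.

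The main delicate point is Step~1: property (3) is essential for extending distinctness of pivots across different $Q$-carrels, whereas property (1) by itself only controls pivots within one carrel. After distinctness is established, the remaining arguments are standard triangular-zero bookkeeping indexed by the carrel structure.
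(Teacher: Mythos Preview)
Your proof is correct and follows essentially the same approach as the paper's, though with considerably more detail: the paper dispatches all three assertions in four sentences, relying on the reader to unpack the triangular-pivot argument you spell out in Step~3. One small redundancy: in Step~1 your invocation of property~(1) for within-carrel distinctness is unnecessary, since your property-(3) argument for arbitrary $i<j$ already handles all pairs regardless of carrel---the paper accordingly cites only property~(3) for distinctness.
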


\begin{proof}
By the third property of $Q$-preferred bases, the $\rho_1, \dots, \rho_n$ are distinct.
The first conclusion follows immediately.
The condition on the values within the $Q$-carrels of a $Q$-permutation follows for $(\rho_1, \dots, \rho_n)$ from the first property of $Q$-preferred bases;  it is clearly $\overline \rho$.
No nonzero linear combination of vectors with distinct pivot coordinates produces a vector with a new pivot coordinate.
\end{proof}

A $Q$-preferred basis $f$ is a distinctive representative for $\Phi_Q(f)$ in the following way:

\begin{lemma}\label{prefer}
Let $f=[v_1, \dots, v_n]$ be a $Q$-preferred basis with pivot coordinates $\rho_1, \dots, \rho_n$.
Fix $1 \leq m \leq n$, and let $1 \leq j \leq k+1$ be minimal such that $m \leq q_j$.
Then $v_m$ is the unique vector in $V_j:=$span$(v_1, \dots, v_{q_j})$ that has a value of $1$ at its pivot coordinate $\rho_m$ and a value of $0$ at coordinates $\rho_1, \rho_2, \dots, \rho_{m-1}$.
\end{lemma}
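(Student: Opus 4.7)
The plan is to split the claim into existence and uniqueness. For existence, the three conditions on $v_m$ are immediate from the definition of a $Q$-preferred basis: $\rho_m$ is the pivot coordinate of $v_m$ by the very definition of $\rho_m$; property~(2) gives $v_m[\rho_m]=1$; and for each $i<m$, property~(3) applied to the pivot of $v_i$ at row $\rho_i$ (noting that column $m$ lies to the right of column $i$) forces $v_m[\rho_i]=0$.

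Uniqueness rests on one linear-algebra observation about $V_j$. I would first show that the pivot-evaluation map $\operatorname{ev}\colon V_j\to\mathbb{C}^{q_j}$ sending $v\mapsto\bigl(v[\rho_1],\ldots,v[\rho_{q_j}]\bigr)$ is bijective. Expanding $v=\sum_{i=1}^{q_j}a_iv_i$ and invoking properties~(2) and~(3), one has $v_\ell[\rho_\ell]=1$ and $v_i[\rho_\ell]=0$ whenever $i>\ell$, so the matrix of $\operatorname{ev}$ with respect to the basis $(v_1,\ldots,v_{q_j})$ and the standard basis of $\mathbb{C}^{q_j}$ is lower triangular with ones on the diagonal, hence invertible. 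Consequently any vector of $V_j$ is determined by its values at the $q_j$ pivot rows $\rho_1,\ldots,\rho_{q_j}$.

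To finish, take any $w\in V_j$ whose pivot sits at row $\rho_m$ with value $1$ and which satisfies $w[\rho_i]=0$ for $i<m$, and verify that $\operatorname{ev}(w)=\operatorname{ev}(v_m)$. For $\ell<m$ and $\ell=m$ the corresponding components agree by the stated hypotheses on $w$ together with the existence check for $v_m$. For $m<\ell\le q_j$ the minimality of $j$ in the bound $m\le q_j$ forces $q_{j-1}<m$, so both $m$ and $\ell$ lie in the $j$-th $Q$-carrel; property~(1) of a $Q$-preferred basis (ascending pivots within a carrel) then gives $\rho_\ell>\rho_m$. Because $\rho_m$ is the pivot of both $w$ and $v_m$, both vectors vanish on every row strictly below $\rho_m$, and in particular at row $\rho_\ell$. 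Bijectivity of $\operatorname{ev}$ now yields $w=v_m$.

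The main obstacle is exactly the range $m<\ell\le q_j$: the listed zero-conditions on $w$ do not directly touch the coordinates $w[\rho_\ell]$ there, so without further input the last $q_j-m$ components of $\operatorname{ev}(w)$ would remain free. What closes the gap is reading the ``pivot at $\rho_m$'' part of the hypothesis on $w$ jointly with the ascending-pivot axiom~(1) inside a single carrel; the minimality of $j$ is what guarantees that all of the uncontrolled indices $\ell$ lie in the same carrel as $m$, so that property~(1) delivers $\rho_\ell>\rho_m$ and the pivot condition kills $w$ at those rows automatically.
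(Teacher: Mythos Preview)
Your proof is correct and follows essentially the same route as the paper's: both arguments show that $w$ and $v_m$ agree at all of the pivot coordinates $\rho_1,\dots,\rho_{q_j}$ (using property~(1) within the $j$-th carrel to handle $\rho_{m+1},\dots,\rho_{q_j}$), and then conclude $w=v_m$ because a vector in $V_j$ is determined by those coordinates. The only cosmetic difference is that the paper sets $u:=w-v_m$ and invokes Lemma~\ref{chain} to see that $u$ cannot have a pivot, whereas you package the same fact as the invertibility of the lower-triangular evaluation matrix; one small wording slip is that the paper phrases property~(1) as pivots \emph{descending} from left to right (meaning the row index increases), so your parenthetical ``ascending pivots'' should be adjusted, though your deduction $\rho_\ell>\rho_m$ is exactly what is needed.
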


\begin{proof}
Let $w$ be any such vector.
Set $u:=w - v_m$.
Since the pivots within a $Q$-carrel of $f$ descend from left to right, we have that $\rho_m < \rho_{m+1} < \dots <\rho_{q_j}$.
Since $w$ and $v_m$ both have pivot coordinate $\rho_m$, the vector $u$ has a value of $0$ at the coordinates $\rho_{m+1}, \dots, \rho_{q_j}$.
Since $w$ and $v_m$ both have a value of $1$ at coordinate $\rho_m$, the vector $u$ has a value of $0$ at coordinate $\rho_m$.
Then $u$ is a vector in $V_j$ with a value of $0$ at the coordinates $\rho_1, \rho_2, \dots, \rho_{q_j}$.
By the preceding lemma, this vector cannot have any other pivot coordinate.
Therefore $u=0$, and so $w=v_m$.
\end{proof}

Each $Q$-flag has a unique $Q$-preferred representative:

\begin{lemma} \label {bijection}
The restriction of the map $\Phi_Q$ to the set of $Q$-preferred bases is a bijection to the set $\mathcal{F}\ell_Q$ of $Q$-flags.
Hence Lemma~\ref{chain} associates to each $Q$-flag a unique $Q$-chain.
\end{lemma}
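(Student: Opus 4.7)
The plan is to split the argument into surjectivity and injectivity, with injectivity following almost immediately from Lemmas~\ref{chain} and \ref{prefer}, and surjectivity handled by a carrel-by-carrel Gauss--Jordan-style construction.

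For surjectivity, given a $Q$-flag $F = (V_1 \subset \cdots \subset V_k)$, I would build a $Q$-preferred basis one carrel at a time. Begin with any basis of $V_1$ arranged as the first $q_1$ columns of a matrix. Select the vector whose last-nonzero-coordinate is largest, place it in column~1, rescale its pivot entry to~1, and subtract appropriate multiples from the remaining $q_1-1$ vectors to kill their entries at this pivot row (preserving the span $V_1$). Recurse on those $q_1-1$ vectors to obtain $v_2,\dots,v_{q_1}$ with strictly descending pivots, pivot entries equal to~1, and zeros in each pivot row strictly to the right. This makes the first carrel $Q$-preferred. Now extend: choose any vectors completing a basis of $V_2$, and for each use the already-fixed $v_1,\dots,v_{q_1}$ (which lie in $V_1\subset V_2$) to clear entries at rows $\rho_1,\dots,\rho_{q_1}$; then repeat the pivot-selection-and-elimination procedure within the second carrel. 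Iterate through all $k$ carrels, and then extend once more through the final $(k{+}1)$-st carrel to complete a basis of $\mathbb{C}^n$. By construction, the resulting basis $f$ satisfies all three defining conditions and $\Phi_Q(f)=F$.

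For injectivity, suppose $f=[v_1,\dots,v_n]$ and $f'=[v_1',\dots,v_n']$ are both $Q$-preferred with $\Phi_Q(f)=\Phi_Q(f')=(V_1,\dots,V_k)$. By Lemma~\ref{chain}, the set $R_j$ of possible pivot coordinates of vectors in $V_j$ is intrinsic to the subspace $V_j$, so $\{\rho_1,\dots,\rho_{q_j}\}=\{\rho_1',\dots,\rho_{q_j}'\}=R_j$ for each $1\le j\le k$, and for the final carrel the pivots exhaust $[n]\setminus R_k$. Condition~(1) forces each carrel's pivots to appear in descending left-to-right order, so the entire pivot sequence is determined by the flag; in particular $\rho_m=\rho_m'$ for all $m$. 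Then for each $m$, letting $j$ be minimal with $m\le q_j$ (where $V_{k+1}:=\mathbb{C}^n$), Lemma~\ref{prefer} characterizes $v_m$ uniquely inside $V_j$ by its pivot coordinate and by the vanishing of its values at $\rho_1,\dots,\rho_{m-1}$; since $v_m'$ satisfies the same uniqueness conditions with respect to the same subspace and the same pivot data, we conclude $v_m=v_m'$.

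The genuinely non-trivial step is organizing the surjectivity construction so that the cross-carrel elimination does not disturb what has already been achieved: one must verify that subtracting multiples of previously-fixed vectors $v_1,\dots,v_{q_{j-1}}$ from a candidate vector in the $j$-th carrel both preserves its class modulo $V_{j-1}$ (since those vectors lie in $V_{j-1}$) and does not reintroduce nonzero entries at the earlier pivot rows (because each $v_i$ is already zero at rows $\rho_{i'}$ for $i'<i$). Injectivity is then essentially a bookkeeping application of the two previous lemmas, with the only minor caveat being the extension of Lemma~\ref{prefer} to indices $m>q_k$ via $V_{k+1}=\mathbb{C}^n$.
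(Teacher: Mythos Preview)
Your proof is correct and follows essentially the same route as the paper: Gaussian elimination for surjectivity and Lemmas~\ref{chain} and \ref{prefer} for injectivity. The only notable difference is organizational. The paper proves injectivity first, then establishes surjectivity by applying three column-operation types (swaps within a carrel, scalings, rightward additions) to an arbitrary matrix representative of $F$, using injectivity at the end to confirm the output is independent of the starting representative. You instead build the $Q$-preferred basis directly from the flag, carrel by carrel, which lets you treat surjectivity as pure existence and avoids the well-definedness check; your injectivity argument is also slightly more streamlined, first pinning down the full pivot sequence via Lemma~\ref{chain} and the descending-within-carrel condition and then invoking Lemma~\ref{prefer} once, whereas the paper splits into the two cases ``pivot sets differ'' versus ``pivot sets agree.'' Both arrangements work; yours is marginally more direct.
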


\begin{proof} 
We begin by showing that the restriction of the map $\Phi_Q$ is injective.
Suppose there are at least two $Q$-preferred bases.
Then $Q \neq \{ n \}$, since here the identity matrix depicts the only $Q$-preferred basis.
Let $f_1=[v_1, v_2, \dots, v_n]$ and $f_2=[w_1, w_2, \dots, w_n]$ be distinct $Q$-preferred bases.
Suppose that for some $1\leq j \leq k$, the sets of pivot coordinates of the first $q_j$ vectors of $f_1$ and of $f_2$ are different.
Let $V_j:=$span$(v_1, \dots, v_{q_j})$ and $W_j:=$span$(w_1, \dots, w_{q_j})$.
Let $1\leq m \leq q_j$ be such that $v_m$ has a pivot coordinate different from those of $w_1, \dots, w_{q_j}$.
By Lemma~\ref{chain}, no vector in $W_j$ has the same pivot coordinate as $v_m$.
Hence $v_m \not \in W_j$, and so $V_j \neq W_j$.
Otherwise the pivot coordinates $\rho_1, \dots, \rho_n$ for $f_1$ and $f_2$ are the same.
Let $1 \leq m \leq n$ be such that $v_m \neq w_m$ and let $1\leq j \leq k+1$ be minimal such that $m \leq q_j$.
The vectors $v_m$ and $w_m$ both have a value of $1$ in their pivot coordinate $\rho_m$ and a value of $0$ in the coordinates $\rho_1 ,\dots, \rho_{m-1}$.
By the preceeding lemma applied to $w_m$, we see that $v_m \not \in W_j$.
So again $V_j\neq W_j$.
Hence $\Phi_Q(f_1) \neq \Phi_Q(f_2)$.

We now provide the inverse map:
Fix a $Q$-flag $F$ and choose \emph{any} ordered basis $h$ such that $\Phi_Q(h)=F$.
The following elementary column operations on $h$ preserve its $Q$-flag:
\begin{enumerate}
\item Swap two columns within the same $Q$-carrel of $h$.
\item Multiply a column of $h$ by a nonzero scalar.
\item Add a multiple of a column of $h$ to a column to its right.
\end{enumerate}
Using these column operations, a Gaussian elimination algorithm can be run on $h$ so that its output $f$ is a $Q$-preferred basis with $\Phi_Q(f)=F$.
We verify that the output is independent of the choice of the basis used to represent $F$:
Suppose two $Q$-preferred bases $f_1$ and $f_2$ can be produced from two representatives $h_1$ and $h_2$ for $F$.
Then $\Phi_Q(f_1)=\Phi_Q(h_1)=\Phi_Q(h_2)=\Phi_Q(f_2)$.
Since the restriction of $\Phi_Q$ to the set of $Q$-preferred bases is injective, we have $f_1=f_2$.
So this process is a well-defined pre-inverse of the restriction of $\Phi_Q$.
If the input to the process is $Q$-preferred, no action is taken.
Hence this function is also a post-inverse of the restriction of $\Phi_Q$ to the set of $Q$-preferred bases.
\end{proof}

Now we present some facts about Bruhat cells.
In Section~\ref{DEFS} we associated to each $Q$-chain $\pi$ the $n \times n$ permutation matrix $s_\pi$.
It is easy to see that $s_\pi$ is $Q$-preferred when viewed as an ordered basis.

\newpage

\begin{fact} \label{union}
The set of $Q$-flags can be expressed as the union of Bruhat cells over all $Q$-chains: $\mathcal{F}\ell_Q= \bigcup \limits_{\rho} C(\rho)$.
\end{fact}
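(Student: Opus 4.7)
The goal is to show every $Q$-flag lies in some Bruhat cell $C(\rho)$; the reverse containment is immediate from the definition of $C(\rho)$ as a subset of $\mathcal{F}\ell_Q$. The strategy is to extract a canonical $Q$-preferred basis from an arbitrary $Q$-flag (using the previously established machinery), then factor that basis as an upper-triangular matrix times a permutation matrix $s_\rho$.

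The plan is to fix an arbitrary $F \in \mathcal{F}\ell_Q$ and invoke Lemma \ref{bijection} to obtain a unique $Q$-preferred basis $f = [v_1, \dots, v_n]$ with $\Phi_Q(f) = F$. Lemma \ref{chain} then supplies a $Q$-chain $\rho$ whose associated $Q$-permutation $\overline{\rho} = (\rho_1, \dots, \rho_n)$ records the pivot rows of the $v_j$'s. My candidate cell is $C(\rho)$, so I must exhibit a $b \in B$ with $f = b \cdot s_\rho$.

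Define $b := f \cdot s_\rho^{-1}$, which is invertible because both factors are. Since column $j$ of $s_\rho$ is the standard basis vector $e_{\overline{\rho}_j} = e_{\rho_j}$, right-multiplication by $s_\rho$ permutes columns; unwinding this, column $i$ of $b$ equals $v_j$ where $j$ is the unique index with $\rho_j = i$. To verify upper-triangularity I check $b[k,i] = 0$ whenever $k > i$: by construction this entry is $v_j[k]$ with $\rho_j = i$, and since $\rho_j$ was defined as the last nonzero coordinate of $v_j$, any row index $k > \rho_j = i$ gives $v_j[k] = 0$. Thus $b \in B$, and $F = \Phi_Q(f) = \Phi_Q(b \, s_\rho) \in C(\rho)$, completing the inclusion $\mathcal{F}\ell_Q \subseteq \bigcup_\rho C(\rho)$.

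There is no substantive obstacle here; the content of the proof has essentially been front-loaded into Lemmas \ref{chain} and \ref{bijection}. The only subtlety is bookkeeping with the permutation matrix: one must be careful that the convention placing $e_{\overline{\rho}_j}$ in column $j$ of $s_\rho$ matches the desired factorization, so that the pivot-descent property of a $Q$-preferred basis translates exactly into upper-triangularity of $b$ after the permutation. Conditions (2) and (3) in the definition of $Q$-preferred are not actually needed for this fact; only condition (1) together with the definition of pivot is used. This is consistent with Fact \ref{union} being only a covering statement, with disjointness (the ``partition'' refinement) to be addressed separately in Fact \ref{partition}.
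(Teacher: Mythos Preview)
Your proof is correct and follows essentially the same route as the paper: invoke Lemma~\ref{bijection} for a $Q$-preferred basis, use Lemma~\ref{chain} to read off the $Q$-chain $\rho$, and then set $b:=f s_\rho^{-1}$, observing that this rearranges the columns of $f$ by pivot so that the result is upper triangular. Your closing aside is slightly off---condition~(3) is what (via Lemma~\ref{chain}) guarantees the pivots are distinct so that $s_\rho$ is an honest permutation matrix, while condition~(1) is not actually used in the upper-triangularity check---but this does not affect the argument itself.
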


\begin{proof}
Fix any $Q$-flag $F$; find its $Q$-preferred basis $f$ as in the proof of Lemma~\ref{bijection}.
Let $\rho= (R_1, \dots, R_k)$ be the $Q$-chain for $f$ from Lemma~\ref{chain}:
The $Q$-chain $\rho$ records the pivots of $f$.
It also records the pivots of the permutation matrix $s_\rho$.
Since the pivots of both descend within each $Q$-carrel, the pivots of $f$ are the locations of the $1$s in $s_\rho$.
It can be seen that $b:=f s_\rho^{-1}$ is the matrix obtained by sorting the columns of $f$ so that all of its pivots descend from left to right.
So the matrix $b$ is upper triangular.
Then $F=\Phi_Q(f)=\Phi_Q(bs_\rho)\in C(\rho)$.
\end{proof}

\begin{fact} \label{disjoint}
Let $\pi, \rho$ be $Q$-chains.
If $\pi \neq \rho$, then the intersection of Bruhat cells $C(\pi) \cap C(\rho)$ is empty.
So for each $Q$-flag $F$, the $Q$-chain $\pi$ such that $F \in C(\pi)$ is unique.
\end{fact}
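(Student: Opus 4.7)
The plan is to show that each $Q$-flag $F$ intrinsically determines a single $Q$-chain $\chi(F)$, and that $\chi(F) = \pi$ whenever $F \in C(\pi)$. Disjointness of cells follows immediately: if $F \in C(\pi) \cap C(\rho)$, then $\pi = \chi(F) = \rho$, giving the stated uniqueness.

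To define $\chi(F)$ I would invoke Lemma~\ref{bijection} to obtain the unique $Q$-preferred basis $f = [v_1, \dots, v_n]$ with $\Phi_Q(f) = F$, and then read off $\chi(F) = (R_1, \dots, R_k)$ from the pivots of $f$ as in Lemma~\ref{chain}. Because Lemma~\ref{chain} additionally characterizes $R_j$ as the set of possible pivot coordinates of nonzero vectors in $V_j$, the chain $\chi(F)$ depends only on $F$, not on any particular representative basis.

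The main computation is to verify that $\chi(F) = \pi$ when $F = \Phi_Q(bs_\pi)$ for some $b \in B$. I would compute column by column: the $j$-th column of the permutation matrix $s_\pi$ has its unique $1$ at row $\overline{\pi}_j$, so the $j$-th column of $bs_\pi$ equals the $\overline{\pi}_j$-th column of $b$. Since $b$ is upper triangular and invertible, that column has zero entries strictly below row $\overline{\pi}_j$ and a nonzero entry at row $\overline{\pi}_j$, hence pivot coordinate $\overline{\pi}_j$. The first $q_j$ columns of $bs_\pi$ span $V_j$ and carry distinct pivot coordinates $\overline{\pi}_1, \dots, \overline{\pi}_{q_j}$; by the construction of $\overline{\pi}$ from $\pi = (P_1, \dots, P_k)$ recalled in Section~\ref{SETUP}, this set equals $P_j$. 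The intrinsic characterization of $R_j$ in Lemma~\ref{chain} then forces $R_j = P_j$ for every $j$, so $\chi(F) = \pi$.

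I do not anticipate a genuine obstacle; the argument is essentially a bookkeeping verification once the pivot structure of $bs_\pi$ has been pinned down. The one subtlety worth noting is that, although the columns of $bs_\pi$ need not descend within each $Q$-carrel and hence $bs_\pi$ is generally not itself $Q$-preferred, the intrinsic description $R_j = \{\text{pivots of nonzero vectors in } V_j\}$ provided by Lemma~\ref{chain} sidesteps the need to actually run the Gaussian elimination from the proof of Lemma~\ref{bijection}; it suffices to exhibit one spanning set of $V_j$ with distinct pivots and observe (via the remark closing the proof of Lemma~\ref{chain}) that no further pivot coordinates can arise.
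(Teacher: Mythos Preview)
Your proposal is correct and follows essentially the same route as the paper's proof. The paper compresses your pivot computation into the single remark ``the action of $B$ preserves the pivots of an ordered basis,'' and then asserts that the $Q$-preferred basis for $F\in C(\pi)\cap C(\rho)$ would carry the pivot list of both $\overline\pi$ and $\overline\rho$; your use of the intrinsic description of $R_j$ from Lemma~\ref{chain} makes explicit the step the paper leaves to the reader in passing from the (not necessarily $Q$-preferred) basis $bs_\pi$ to the $Q$-preferred one.
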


\begin{proof}
The action of $B$ preserves the pivots of an ordered basis.
So the $Q$-preferred basis for a $Q$-flag $F \in C(\pi)\cap C(\rho)$ would have the pivots listed in both $\overline \pi$ and $\overline \rho$.
But $\pi \neq \rho$.
\end{proof}

Together, these two facts show:

\begin{fact}\label{partition}
The Bruhat cells $C(\pi)$ for all $Q$-chains $\pi$ partition $\mathcal{F}\ell_Q$.  
\end{fact}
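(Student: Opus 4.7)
The plan is to observe that this fact is an immediate consequence of the two preceding facts, together with the (trivial) observation that each Bruhat cell is nonempty. A partition of $\mathcal{F}\ell_Q$ is, by definition, a collection of pairwise disjoint nonempty subsets whose union is all of $\mathcal{F}\ell_Q$.

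First, Fact~\ref{union} supplies the covering statement $\mathcal{F}\ell_Q = \bigcup_\rho C(\rho)$, where the union ranges over all $Q$-chains. Second, Fact~\ref{disjoint} supplies the pairwise disjointness: for $\pi \ne \rho$ we have $C(\pi) \cap C(\rho) = \emptyset$. Third, for any $Q$-chain $\pi$, the cell $C(\pi)$ is nonempty because taking $b := I_n$ in the definition gives $\Phi_Q(s_\pi) = \varphi(\pi) \in C(\pi)$.

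There is no real obstacle here; the only thing worth remarking on is that one should be careful to indicate that the index set for the partition is precisely the set of $Q$-chains (no duplicates, no missed cells), which follows because the map $\pi \mapsto C(\pi)$ is injective by Fact~\ref{disjoint} and surjective onto the collection of cells by definition. The proof itself will be a single short paragraph that explicitly invokes the two previous facts and notes the nonemptiness.
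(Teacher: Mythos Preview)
Your proposal is correct and matches the paper's own approach: the paper simply states that Facts~\ref{union} and~\ref{disjoint} together establish the partition, and you do the same while additionally noting nonemptiness of each cell via $\varphi(\pi)\in C(\pi)$.
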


\section{Tabloid monomials, Bruhat cells, and Schubert varieties}\label{MON}

Here we present some facts concerning tabloid monomials, Bruhat cells, and Schubert varieties.
As in Section~\ref{SPAN2},  fix an $n$-partition $\lambda$ such that $Q(\lambda) \subseteq Q$.

\begin{lemma} \label{projective}
Let $g$ and $h$ be ordered bases with $\Phi_Q(g)=\Phi_Q(h)$.
There exists one $\alpha\neq 0$ such that for all tabloid monomials $\tau \in \Gamma_\lambda$, the equation $\tau(g)=\alpha \tau(h)$ holds.
\end{lemma}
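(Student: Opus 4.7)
The plan is to exhibit an invertible matrix $M$ so that $h = gM$, and then show that each $\zeta_c$-initial minor rescales by a factor that depends only on the column length $\zeta_c$, not on the choice of rows. Multiplying these factors over all columns of $\lambda$ will produce a single scalar $\alpha \neq 0$ that works uniformly for every tabloid monomial $\tau$.

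First I would unpack the hypothesis $\Phi_Q(g) = \Phi_Q(h)$ in terms of the $Q$-carrels. Write $q_0 := 0$ and $q_{k+1} := n$. Since $g$ and $h$ are both ordered bases for $\mathbb{C}^n$, there is a unique invertible $n \times n$ matrix $M$ with $h = gM$. Column $c$ of $h$ lies in the subspace $V_{j(c)}$, where $j(c) := \min\{ j : c \le q_j \}$, and $V_{j(c)}$ is spanned by the first $q_{j(c)}$ columns of $g$. Hence $M_{ic} = 0$ whenever $i > q_{j(c)}$. In other words $M$ is block upper triangular with diagonal blocks $M_1, \dots, M_{k+1}$ of sizes $q_1 - q_0, q_2 - q_1, \dots, q_{k+1} - q_k$, and each $M_j$ is invertible.

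Next I would compute the effect of the transformation on a single column minor. Fix a tabloid $T$ of shape $\lambda$ and a column index $c$; its length $\zeta_c$ lies in $Q(\lambda) \cup \{n\} \subseteq \{q_1, \dots, q_{k+1}\}$, say $\zeta_c = q_{j(c)}$. By the block upper triangular form of $M$, the first $q_{j(c)}$ columns of $h$ are obtained from the first $q_{j(c)}$ columns of $g$ by right multiplication with the top-left principal $q_{j(c)} \times q_{j(c)}$ submatrix of $M$. Restricting both sides to rows $T(1,c), \dots, T(\zeta_c, c)$ and taking determinants gives
\[
  \det\!\bigl(h[\text{rows } T(1,c),\dots,T(\zeta_c,c);\, 1,\dots,\zeta_c]\bigr) \;=\; \beta_{j(c)} \cdot \det\!\bigl(g[\text{same rows};\, 1,\dots,\zeta_c]\bigr),
\]
where $\beta_j := \prod_{i=1}^{j} \det(M_i)$. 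The key point is that $\beta_{j(c)}$ depends only on $\zeta_c$ (through $j(c)$), not on the row indices $T(\,\cdot\,, c)$.

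Finally, multiplying over all columns of $\lambda$, I obtain
\[
  \tau(h) \;=\; \Bigl(\prod_{c=1}^{\lambda_1} \beta_{j(c)}\Bigr)\,\tau(g).
\]
Setting $\alpha := \prod_{c=1}^{\lambda_1} \beta_{j(c)}$ produces a single scalar that is independent of the tabloid $T$, because the product depends only on the multiset of column lengths of $\lambda$ and on $M$. Since $M$ is invertible, each diagonal block $M_i$ is invertible, so every $\beta_j$ is nonzero, and therefore $\alpha \neq 0$. The only potential obstacle is keeping the bookkeeping straight between the ``column'' indexing of $\lambda$ and the block structure of $M$; this is handled cleanly by recording the carrel index $j(c)$ once and using the Cauchy-Binet-free observation that the $\zeta_c$-initial columns of $h$ depend only on the $\zeta_c$-initial columns of $g$.
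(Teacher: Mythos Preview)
Your proof is correct. The underlying idea coincides with the paper's: right multiplication by a transition matrix that respects the $Q$-carrel structure scales every $\zeta_c$-initial minor by a factor depending only on $\zeta_c$, not on the chosen rows.

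The execution differs slightly. The paper factors through the common $Q$-preferred basis $f$: it reduces both $g$ and $h$ to $f$ via the elementary column operations of Lemma~\ref{bijection}, observes that (because $Q(\lambda)\subseteq Q$) these operations restrict to elementary column operations on any $\zeta_c$-initial submatrix, and hence each $\zeta_c$-initial minor is scaled by a nonzero constant $\kappa_c$ independent of the rows. You instead relate $g$ and $h$ directly by the single matrix $M=g^{-1}h$, identify its block upper triangular shape from the flag condition, and read off the scaling factor $\beta_{j(c)}=\prod_{i\le j(c)}\det(M_i)$. Your route is a bit more self-contained and makes the dependence on the carrel index explicit; the paper's route has the virtue of reusing the $Q$-preferred machinery already developed. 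Either way the content is the same linear-algebra observation. (A trivial remark: you end with $\tau(h)=\alpha\,\tau(g)$ rather than $\tau(g)=\alpha\,\tau(h)$; just invert.)
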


\begin{proof}
Recall that column $c$ of any tabloid of shape $\lambda$ specifies a $\zeta_c$-initial minor of $h$.
Since $Q(\lambda)\subseteq Q$, the sequence of elementary column operations in the proof of Lemma~\ref{bijection} that produces the $Q$-preferred basis $f$ from $h$ is also a sequence of elementary column operations when restricted to any $\zeta_c$-initial submatrix of $h$.
Hence any $\zeta_c$-initial minor of $h$ is a nonzero multiple, say $\kappa_c(h)$, of the same $\zeta_c$-initial minor of $f$.
Then $\tau(h)=\left(\prod_{c=1}^{\lambda_1} \kappa_{c}(h)\right) \tau(f)$.
The valuation $\tau(g)$ also differs by some uniform nonzero scalar multiple from $\tau(f)$ for all tabloid monomials $\tau$.
\end{proof}

Fix a $Q$-flag $F$.
We can evaluate the sequence of all tabloid monomials of shape $\lambda$ at any ordered basis representative for $F$.
By the above lemma, the projective equivalence class of this sequence of valuations does not depend on the choice of representative.
In this way we define a map $\Omega_\lambda: \mathcal{F}\ell_Q \to \mathbb{P}(\mathbb{C}^N)$, where $N$ is the number of tabloids of shape $\lambda$.

Now fix a $Q$-chain $\pi=(P_1, \dots, P_k)$.
The next four results consider whether the tabloid monomials vanish or not at an ordered basis $h$ when $\Phi_Q(h)$ is in the Bruhat cell $C(\pi)$ or the Schubert variety $X(\pi)$.

\begin{lemma}\label{nonzero}
At any ordered basis $h$ with $\Phi_Q(h) \in C(\pi)$, the monomial $\psi_\lambda(\pi)$ of the $\lambda$-key $Y_\lambda(\pi)$ does not vanish.
\end{lemma}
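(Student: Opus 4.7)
The plan is to reduce the computation to a convenient representative of $\Phi_Q(h)$ and then identify each minor factor of $\psi_\lambda(\pi)$ with (up to a sign) the determinant of a principal submatrix of an upper triangular matrix. Since $\Phi_Q(h) \in C(\pi)$, the definition of Bruhat cell provides some $b \in B$ with $\Phi_Q(bs_\pi) = \Phi_Q(h)$. Then Lemma~\ref{projective} yields a nonzero scalar $\alpha$ such that $\psi_\lambda(\pi)(h) = \alpha \, \psi_\lambda(\pi)(bs_\pi)$, so it suffices to verify $\psi_\lambda(\pi)(bs_\pi) \neq 0$.

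Next I would split $\psi_\lambda(\pi)$ into its minor factors, one for each column of $Y_\lambda(\pi)$. Columns of length $n$ contribute $\det(bs_\pi) = \det(b)\det(s_\pi) \neq 0$, so the heart of the argument is to show that for each $1 \leq j \leq k$ for which a column of length $q_j$ appears in $\lambda$, the $q_j$-initial minor of $bs_\pi$ with rows $P_j$ is nonzero.

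To compute this minor I would use that the $m$-th column of $s_\pi$ equals $e_{\rho_m}$, where $\overline{\pi} = (\rho_1, \ldots, \rho_n)$, so the $m$-th column of $bs_\pi$ is the $\rho_m$-th column of $b$. Writing $P_j = \{p_1 < p_2 < \cdots < p_{q_j}\}$, the $q_j \times q_j$ submatrix of $bs_\pi$ with rows $p_1, \ldots, p_{q_j}$ and columns $1, \ldots, q_j$ has $(i,m)$-entry $b_{p_i, \rho_m}$. The set equality $\{\rho_1, \ldots, \rho_{q_j}\} = P_j$, immediate from the construction of $\overline{\pi}$, shows that this submatrix is obtained from the principal submatrix of $b$ indexed by $P_j$ via a permutation of columns. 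Because $b$ is an invertible upper triangular matrix, this principal submatrix is itself upper triangular with nonzero diagonal entries $b_{p_1, p_1}, \ldots, b_{p_{q_j}, p_{q_j}}$, so the minor equals $\pm \prod_{p \in P_j} b_{p, p}$, which is nonzero.

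Assembling the factors, every minor comprising $\psi_\lambda(\pi)(bs_\pi)$ is nonzero, so the product is nonzero, and therefore $\psi_\lambda(\pi)(h) \neq 0$. The only bookkeeping step that requires care is identifying the submatrix as a column-reordering of a principal submatrix of $b$; I do not anticipate any other genuine obstacle.
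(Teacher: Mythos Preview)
Your proof is correct and follows essentially the same approach as the paper: reduce via Lemma~\ref{projective} to a representative of the form $bs_\pi$ with $b\in B$, and then recognize each $q_j$-initial minor with rows $P_j$ as (up to a column permutation) the determinant of the principal submatrix of $b$ on the index set $P_j$. The only cosmetic difference is that the paper passes through the $Q$-preferred basis $f$ for $\Phi_Q(h)$ (so that the resulting value is exactly $\pm 1$), whereas you work with an arbitrary $b\in B$ and obtain the nonzero value $\pm\prod_{p\in P_j} b_{p,p}$; both yield the desired nonvanishing.
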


\begin{proof}
By Lemma~\ref{bijection} there is a $Q$-preferred basis $f$ such that $\Phi_Q(f)=\Phi_Q(h)$.
There is a $b\in B$ such that $f=bs_\pi$.
When the columns of $f$ are sorted so that their pivots are in descending order, we produce an upper triangular matrix with $1$s on the diagonal.
Since the minor specified by any column of length $n$ is the determinant of $f$, we see that such a minor is $\pm 1$.
For any $1\leq j \leq k$, the minor of $f$ specified by a column of $Y_\lambda(\pi)$ of length $q_j$ is the determinant of the $q_j$-initial submatrix of $h$ with rows given by $P_j$.
The pivot coordinates in the first $q_j$ columns of $f=bs_\pi$ are also given by $P_j$.
Hence when the columns of this $q_j$-initial submatrix of $f$ are sorted so that these pivots are in descending order, we produce an upper triangular matrix with $1$s on the diagonal.
Multiplying these minors, we see that the value of $\psi_\lambda(\pi)$ is $\pm 1$ at $f$.
By Lemma~\ref{projective}, the value is also nonzero at $h$.
\end{proof}

\begin{lemma} \label{zero}
At any ordered basis $h$ with $\Phi_Q(h) \in X(\pi)$, any tabloid monomial $\tau \in Z_\lambda(\pi)$ vanishes.
\end{lemma}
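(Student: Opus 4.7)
The plan is to reduce to evaluating the monomials at the unique $Q$-preferred representative $f$ of the flag, write $f = bs_\rho$ with $b \in B$ upper triangular, and then prove a single-column domination statement by combining upper triangularity with a pigeonhole count.

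First I would set things up: since $\Phi_Q(h) \in X(\pi) = \bigsqcup_{\rho \preceq \pi} C(\rho)$, there is a unique $\rho \preceq \pi$ with $\Phi_Q(h) \in C(\rho)$. By Lemma~\ref{bijection}, $\Phi_Q(h)$ has a unique $Q$-preferred representative $f$, and the construction in the proof of Fact~\ref{union} gives $f = b s_\rho$ for some $b \in B$. By Lemma~\ref{projective}, $\tau(h) = 0$ iff $\tau(f) = 0$. Since $Z_\lambda(\pi)$ is spanned by monomials $\tau_T$ with $T \not\preceq Y_\lambda(\pi)$, it suffices to verify $\tau_T(f) = 0$ for each such tabloid $T$.

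The crux is a single-column claim. Let $S = \{s_1 < \cdots < s_d\}$ with $d \in \{q_1, \ldots, q_k, n\}$, and let $j$ be the index with $d = q_j$ (using $q_{k+1} := n$, $R_{k+1} := [n]$). I claim that if the $d$-initial minor of $f$ with rows $S$ is nonzero, then $Y(S) \preceq Y(R_j)$. Since the first $d$ columns of $s_\rho$ are $e_{\overline\rho_1}, \ldots, e_{\overline\rho_d}$, we have $f[i,\ell] = b[i,\overline\rho_\ell]$ for $1 \le \ell \le d$, and $\{\overline\rho_1, \ldots, \overline\rho_d\} = R_j$ as sets. So the minor equals, up to a sign from column reordering, the determinant of the $d\times d$ matrix $M$ with $M[i,\ell] = b[s_i, \overline\rho_\ell]$. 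Since $b$ is upper triangular, $M[i,\ell] = 0$ whenever $s_i > \overline\rho_\ell$, so a nonzero determinant requires a permutation $\sigma$ with $s_i \le \overline\rho_{\sigma(i)}$ for every $i$. If $Y(S) \not\preceq Y(R_j)$, choose $i_0$ with $s_{i_0} > r_{j,i_0}$, where $R_j = \{r_{j,1} < \cdots < r_{j,d}\}$. Then $\overline\rho_{\sigma(i_0)}, \ldots, \overline\rho_{\sigma(d)}$ are $d - i_0 + 1$ distinct elements of $R_j$ all exceeding $r_{j,i_0}$, but only the $d - i_0$ values $r_{j,i_0+1}, \ldots, r_{j,d}$ qualify, a contradiction.

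To finish, I would note that $\tau_T(f)$ is the product of its column minors, so $\tau_T(f) \ne 0$ forces each column minor to be nonzero. The claim then gives $Y(S_c) \preceq Y(R_{j(c)})$ for every column $c$ of $T$, where $j(c)$ satisfies $\zeta_c = q_{j(c)}$; for columns of length $n$ the condition is automatic since $Y([n])$ is the only column tabloid of that length. Reading this columnwise gives $T \preceq Y_\lambda(\rho)$, and combining $\rho \preceq \pi$ with Lemma~\ref{order} yields $T \preceq Y_\lambda(\pi)$, contradicting $T \not\preceq Y_\lambda(\pi)$. The main obstacle is the pigeonhole/triangularity computation in the middle paragraph; once the columnwise statement is in hand, the reduction to $f = bs_\rho$ and the appeal to Lemma~\ref{order} are routine bookkeeping.
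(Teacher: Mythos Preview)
Your proof is correct and follows essentially the same approach as the paper: reduce to the $Q$-preferred basis $f = bs_\rho$, use upper triangularity of $b$ together with the pivot set $R_j$ to force a column minor to vanish, and invoke Lemma~\ref{projective} and Lemma~\ref{order}. The only stylistic difference is that the paper picks the single offending location $(r,c)$ and argues directly that the last $\zeta_c-r+1$ rows of that minor are supported on at most $\zeta_c-r$ columns (a rank-deficiency argument), whereas you phrase the same pigeonhole count via the permutation expansion of the determinant and package it as a contrapositive column-domination claim applied to every column; the underlying combinatorics is identical.
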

\begin{proof}
Let $f$ be the $Q$-preferred basis such that $\Phi_Q(f)=\Phi_Q(h)$.
Then $\Phi_Q(f) \in C(\rho)$ for some $\rho \preceq \pi$.
By Lemma~\ref{order}, we have $Y_\lambda(\rho) \preceq Y_\lambda(\pi)$.
There is a $b \in B$ such that $f=bs_{\rho}$.
Since $\tau \in Z_\lambda(\pi)$, it is the monomial of a tabloid $T$ such that $T \not \preceq Y_\lambda(\pi)$.
Find a location $(r,c)\in \lambda$ such that $T(r,c)>Y_\lambda(\pi)[(r,c)]$.
The $r$ highest pivots of the first $\zeta_c$ columns of $f=bs_{\rho}$ are the coordinates $Y_\lambda(\rho)[(1,c)]$, $Y_\lambda(\rho)[(2,c)]$, $\dots$, $Y_\lambda(\rho)[(r,c)]$.
These coordinates are at or above the coordinate $Y_\lambda(\pi)[(r,c)]$ since $Y_\lambda(\rho)[(r,c)] \leq Y_\lambda(\pi)[(r,c)]$.
On the other hand, the minor in $\tau$ specified by column $c$ of $T$ is the determinant of a $\zeta_c$-initial submatrix $m$ of $f$ whose final $\zeta_c-r+1$ rows are the rows $T(r,c), T(r+1, c), \dots, T(\zeta_c, c)$ of $f$.
Since $T(r,c)>Y_\lambda(\pi)[(r,c)]$, the $r$ columns of $f=bs_\rho$ with the pivots listed above have zeros in the last $\zeta_c-r+1$ rows used for $\tau$.
This leaves at most $\zeta_c-r$ columns of $m$ which can be nonzero in their last $\zeta_c-r+1$ rows.
Hence $\det(m)=0$, and so $\tau(f)=0$.
By Lemma~\ref{projective}, we also have $\tau(h)=0$.
\end{proof}

%B
\begin{fact}
Suppose $Q(\lambda)=Q$.  At any ordered basis $h$ with $\Phi_Q(h) \not \in X(\pi)$, there exists a tabloid monomial $\tau \in Z_\lambda(\pi)$ that does not vanish.
\end{fact}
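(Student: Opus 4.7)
The plan is to identify a single explicit tabloid monomial that does the job, namely the monomial $\psi_\lambda(\rho)$ of the $\lambda$-key of the $Q$-chain $\rho$ that indexes the Bruhat cell containing $\Phi_Q(h)$. Everything needed has essentially been set up in the preceding lemmas and facts; the argument will be a short chain of invocations.

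First, since the Bruhat cells partition $\mathcal{F}\ell_Q$ by Fact~\ref{partition}, there is a unique $Q$-chain $\rho$ with $\Phi_Q(h)\in C(\rho)$. Because $\Phi_Q(h)\notin X(\pi)=\bigsqcup_{\sigma\preceq\pi}C(\sigma)$, this $\rho$ must satisfy $\rho\not\preceq\pi$. This is the only place the assumption $\Phi_Q(h)\notin X(\pi)$ is used.

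Next, I invoke Lemma~\ref{order}. This is the step where the hypothesis $Q(\lambda)=Q$ plays its role: under that hypothesis the lemma gives the converse implication, so $\rho\not\preceq\pi$ forces $Y_\lambda(\rho)\not\preceq Y_\lambda(\pi)$. By the definition of $Z_\lambda(\pi)$ as the span of monomials of tabloids $T$ with $T\not\preceq Y_\lambda(\pi)$, and since $Y_\lambda(\rho)$ is itself a tabloid (in fact a tableau) of shape $\lambda$, its monomial $\psi_\lambda(\rho)$ lies in $Z_\lambda(\pi)$.

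Finally, I apply Lemma~\ref{nonzero} to the $Q$-chain $\rho$ and the ordered basis $h$: since $\Phi_Q(h)\in C(\rho)$, the monomial $\psi_\lambda(\rho)$ of the $\lambda$-key $Y_\lambda(\rho)$ is nonzero at $h$ (indeed $\pm1$ after rescaling to the $Q$-preferred representative, by the proof of that lemma). Taking $\tau:=\psi_\lambda(\rho)$ gives a tabloid monomial in $Z_\lambda(\pi)$ that does not vanish at $h$, as required. There is no real obstacle; the only subtlety is remembering that the converse direction of Lemma~\ref{order} requires $Q(\lambda)=Q$, which is precisely the hypothesis of this Fact.
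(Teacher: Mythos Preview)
Your proof is correct and follows essentially the same approach as the paper: both identify the Bruhat cell $C(\rho)$ containing $\Phi_Q(h)$ via Fact~\ref{partition}, use the contrapositive of the converse direction of Lemma~\ref{order} (which requires $Q(\lambda)=Q$) to place $\psi_\lambda(\rho)$ in $Z_\lambda(\pi)$, and then invoke Lemma~\ref{nonzero} for nonvanishing. Your exposition is slightly more detailed, but the argument is the same.
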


\begin{proof}

By Fact \ref{partition}, there is a unique $Q$-chain $\rho$ such that $\Phi_Q(h) \in C(\rho)$.
Since $\Phi_Q(h) \not \in X(\pi)$, we have $\rho \not \preceq \pi$.
Since $Q(\lambda)=Q$, Lemma~\ref{order} concludes $Y_\lambda(\rho) \not \preceq Y_\lambda(\pi)$.
Hence $\psi_\lambda(\rho)\in Z_\lambda(\pi)$.
By Lemma \ref{nonzero}, the value of $\psi_\lambda(\rho)$ at $h$ is nonzero.  
\end{proof}

The preceeding three statements actually depended only on the $Q$-flag of an ordered basis, since the vanishing and nonvanishing of tabloid monomials is preserved under the scaling found in Lemma~\ref{projective}.
So these statements are useful when the tabloid monomials are used as projective coordinates for $\mathcal{F}\ell_Q$.
The last two statements show:

\begin{fact} \label{zero_set}
If $Q(\lambda)=Q$, then the Schubert variety $X(\pi)$ is the zero set in $\mathcal{F}\ell_Q$ of $Z_\lambda(\pi)$.
\end{fact}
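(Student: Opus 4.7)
The plan is to prove the set equality by establishing the two containments separately, with both directions following essentially immediately from the two results just proved. A preliminary observation, needed for both directions, is that the vanishing or non-vanishing of any tabloid monomial $\tau \in \Gamma_\lambda$ at an ordered basis depends only on the associated $Q$-flag: by Lemma~\ref{projective}, changing from one representative $h$ to another $g$ of the same $Q$-flag scales every tabloid monomial's value by a common nonzero factor $\alpha$, so $\tau(h) = 0$ if and only if $\tau(g)=0$. Consequently it makes sense to speak of the zero set of a set of tabloid monomials inside $\mathcal{F}\ell_Q$, and of $\tau$ vanishing at a $Q$-flag $F$.

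For the inclusion $X(\pi)\subseteq Z(Z_\lambda(\pi))$, I would fix $F\in X(\pi)$ and choose any ordered basis $h$ with $\Phi_Q(h)=F$. Lemma~\ref{zero} applies directly (it does not require $Q(\lambda)=Q$) and states that every tabloid monomial $\tau\in Z_\lambda(\pi)$ vanishes at $h$. Since $Z_\lambda(\pi)$ is spanned by such tabloid monomials, every element of $Z_\lambda(\pi)$ vanishes at $h$, hence at $F$. Thus $F$ belongs to the zero set of $Z_\lambda(\pi)$.

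For the reverse inclusion $Z(Z_\lambda(\pi))\subseteq X(\pi)$, I would argue by contrapositive. Fix a $Q$-flag $F\not\in X(\pi)$ and any representative $h$. The unnamed Fact preceding this statement, which is precisely where the hypothesis $Q(\lambda)=Q$ is used, produces a specific tabloid monomial $\tau\in Z_\lambda(\pi)$ with $\tau(h)\neq 0$; concretely, it is $\psi_\lambda(\rho)$ for the unique $Q$-chain $\rho$ with $F\in C(\rho)$, where $\rho\not\preceq\pi$ forces $\psi_\lambda(\rho)\in Z_\lambda(\pi)$ via Lemma~\ref{order}, and Lemma~\ref{nonzero} forces the non-vanishing. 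Hence $F$ is not in the zero set of $Z_\lambda(\pi)$.

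Combining the two inclusions yields the claim. There is no real obstacle here; this is a packaging statement whose substance is carried by Lemmas~\ref{projective}, \ref{zero}, \ref{nonzero}, \ref{order} and the preceding Fact. The only point worth emphasizing in the write-up is the role of the hypothesis $Q(\lambda)=Q$: it is used exactly once, in the reverse inclusion, to guarantee via Lemma~\ref{order} that $\rho\not\preceq\pi$ translates into $Y_\lambda(\rho)\not\preceq Y_\lambda(\pi)$, and thus into membership of $\psi_\lambda(\rho)$ in $Z_\lambda(\pi)$.
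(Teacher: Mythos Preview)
Your proposal is correct and follows essentially the same approach as the paper, which simply notes that the fact follows from the two preceding statements (Lemma~\ref{zero} for the forward inclusion and the unnamed Fact for the reverse). Your added remark about Lemma~\ref{projective} making vanishing well defined on $\mathcal{F}\ell_Q$ is a helpful elaboration of a point the paper leaves implicit.
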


Using the above facts, we can finally show:

\begin{fact} \label{injective}
If $Q(\lambda)=Q$, then the sequence of all tabloid monomials of shape $\lambda$ distinguishes $Q$-flags.
That is, the map $\Omega_\lambda$ is injective and faithfully parameterizes $\mathcal{F}\ell_Q$.
\end{fact}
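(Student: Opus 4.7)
The plan is to show $\Omega_\lambda$ is injective in two stages: first, any two flags with the same $\Omega_\lambda$ value must lie in a common Bruhat cell; second, within one Bruhat cell the tabloid monomial valuations already determine each subspace $V_j$ via its Pl\"ucker coordinates.

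For stage one, suppose $F, F' \in \mathcal{F}\ell_Q$ satisfy $\Omega_\lambda(F) = \Omega_\lambda(F')$, and let $\pi, \pi'$ be the unique $Q$-chains with $F \in C(\pi)$ and $F' \in C(\pi')$ given by Fact~\ref{partition}. Assume for contradiction that $\pi \neq \pi'$; by antisymmetry of $\preceq$, after swapping $F$ and $F'$ if necessary we may assume $\pi' \not\preceq \pi$. Since $Q(\lambda) = Q$, Lemma~\ref{order} gives $Y_\lambda(\pi') \not\preceq Y_\lambda(\pi)$, so $\psi_\lambda(\pi') \in Z_\lambda(\pi)$. By Lemma~\ref{zero} the monomial $\psi_\lambda(\pi')$ vanishes at every representative of $F$, while by Lemma~\ref{nonzero} it is nonzero at every representative of $F'$; Lemma~\ref{nonzero} also shows $\psi_\lambda(\pi)$ is nonzero at $F$, so both tuples of tabloid monomial valuations are nonzero. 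Since no nonzero scalar can relate a zero coordinate to a nonzero one, this contradicts the projective equivalence of the two tuples. Hence $\pi = \pi'$.

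For stage two, let $f, f'$ be the $Q$-preferred bases of $F$ and $F'$ from Lemma~\ref{bijection}, and by Lemma~\ref{projective} pick $\alpha \neq 0$ with $\tau(f) = \alpha \tau(f')$ for every tabloid monomial $\tau$ of shape $\lambda$. Fix $1 \leq j \leq k$; since $Q(\lambda) = Q$, the shape $\lambda$ has at least one column of length $q_j$. For each $q_j$-subset $A \subseteq [n]$, let $T_A$ be the tabloid obtained from $Y_\lambda(\pi)$ by replacing one chosen length-$q_j$ column with $Y(A)$. Its monomial factors as $\tau_A = m_j^A \cdot \beta$, where $m_j^A$ is the $q_j$-initial minor at rows $A$ and $\beta$ is the product of minors coming from the untouched columns of $Y_\lambda(\pi)$. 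The argument of Lemma~\ref{nonzero} shows each untouched factor equals $\pm 1$ at both $f$ and $f'$, so $\beta$ takes a nonzero sign at each of $f$ and $f'$ that is independent of $A$; hence the tuples $\bigl(m_j^A(f)\bigr)_A$ and $\bigl(m_j^A(f')\bigr)_A$ are proportional. Writing $M_j := [v_1 \mid \cdots \mid v_{q_j}]$ and noting that the submatrix of $M_j$ on rows $P_j$ is invertible because $m_j^{P_j}(f) = \pm 1$, let $\tilde M_j$ be $M_j$ right-multiplied by the inverse of this submatrix, so that the rows of $\tilde M_j$ on $P_j$ form the identity. For any $r \notin P_j$ and any $\rho_\ell \in P_j$, Laplace expansion along row $r$ of the $q_j$-minor of $\tilde M_j$ at rows $(P_j \setminus \{\rho_\ell\}) \cup \{r\}$ shows that the $(r, \ell)$-entry of $\tilde M_j$ equals $\pm m_j^A(f)/m_j^{P_j}(f)$. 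The same formula recovers $\tilde M'_j$ from the Pl\"ucker tuple of $V'_j$, and the proportionality of tuples makes these ratios coincide, so $\tilde M_j = \tilde M'_j$ and hence $V_j = V'_j$. Doing this for every $j$ yields $F = F'$.

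The main obstacle is isolating the Pl\"ucker data of a single $V_j$ from the full projective tuple of tabloid monomials. The crucial device is the choice of $Y_\lambda(\pi)$ as the base tabloid: its untouched columns contribute only unit factors at any $Q$-preferred representative of a flag in $C(\pi)$, which converts projective equivalence of the entire tabloid-monomial tuple into genuine projective proportionality of the $q_j$-Pl\"ucker coordinates. Once that reduction is in place, the Pl\"ucker-to-subspace recovery is an elementary Cramer-type computation whose only input is the unit value of the minor $m_j^{P_j}$, which the $Q$-preferred structure guarantees.
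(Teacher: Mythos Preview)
Your proof is correct and follows essentially the same route as the paper. Stage~1 matches verbatim. In stage~2 both you and the paper replace one column of $Y_\lambda(\pi)$ by $Y\bigl((P_\ell\setminus\{\text{pivot}\})\cup\{r\}\bigr)$ and observe that the remaining factors are $\pm 1$ on a $Q$-preferred basis in $C(\pi)$; the paper then reads off the single $(r,c)$-entry where $f$ and $f'$ differ, while you run this over all such replacements to recover the full Pl\"ucker tuple of each $V_j$ and hence the normalized matrix $\tilde M_j$. The underlying mechanism is identical; your version is simply the systematic form of the paper's one-entry argument. (Minor remark: the existence of $\alpha$ comes directly from the hypothesis $\Omega_\lambda(F)=\Omega_\lambda(F')$ rather than from Lemma~\ref{projective}, which concerns different representatives of the \emph{same} flag.)
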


\begin{proof}
Let $F, G$ be $Q$-flags.
Find the $Q$-preferred bases $f,g$ of $F, G$.
Let $\pi, \rho$ be the $Q$-chains such that $F \in C(\pi)$ and $G \in C(\rho)$ .
Suppose $\pi \not \preceq \rho$.
Since $Q(\lambda)=Q$, Lemma~\ref{order} concludes $Y_\lambda(\pi) \not \preceq Y_\lambda(\rho)$.
So the monomial $\psi_\lambda(\pi)$ is in  $Z_\lambda(\rho)$.
By Lemmas \ref{nonzero} and \ref{zero}, the monomial $\psi_\lambda(\pi)$ is nonzero at $f$ and zero at $g$.
If $\pi \prec \rho$, apply the argument above to $\rho \not \preceq \pi$.

Otherwise $\pi=\rho$, and so $f$ and $g$ have the same list of pivots $\overline \pi= (\pi_1, \dots, \pi_n)$.
From the proof of Lemma \ref{nonzero}, the monomial $\psi_\lambda(\pi)$ is either $1$ at both $f$ and $g$ or $-1$ at both.
Write $\pi=(P_1, \dots, P_k)$.
If $P_j=\{1, 2, \dots, q_j\}$ for all $1 \leq j \leq k$, then $\overline \pi$ is the identity permutation.
Here the identity matrix depicts the only $Q$-preferred basis, and so there is only one $Q$-flag.
So suppose there is some $1\leq j \leq k$ such that $P_j \neq \{1, 2, \dots, q_j\}$.
Then in a $Q$-preferred basis, there exists a matrix entry unconstrained by the three $Q$-preferred properties.
Now suppose $F \neq G$.
Then $f \neq g$
Find an entry $(r,c)$ where $f \neq g$.
It must lie above the pivot in column $c$, so $r<\pi_c$.
Let $1 \leq \ell \leq k$ be minimal such that $\pi_c \in P_\ell$.
One of $f$ or $g$ is nonzero at entry $(r,c)$.
By the third property of $Q$-preferred bases, the column with pivot coordinate $r$ lies in a later $Q$-carrel than does column $c$.
Hence $r \not \in P_\ell$.
Let $T$ be the tabloid obtained from $Y_\lambda(\pi)$ by replacing one of its columns $Y(P_\ell)$ with the column $Y\left(P_\ell \setminus \{\pi_c\} \cup \{r\} \right) \prec Y(P_\ell)$. 
Following a proof similar to that of Lemma \ref{nonzero}, it can be seen that the evaluation of its monomial $\tau$ at $f$ and $g$ gives (up to sign) their $(r,c)$ entries.
Therefore the two valuations of the pair of monomials $(\psi_\lambda(\pi), \tau)$ at $f$ and at $g$ are not multiples of each other. 
\end{proof}

\section{Linear independence of the Demazure monomials}\label{LIND}

The $n$-partition $\lambda$ such that $Q(\lambda)\subseteq Q$ remains fixed.
The objective of this section is to prove:

\begin{theorem}\label{LI}
Fix a nonempty $Q\subseteq [n-1]$.  Let $\lambda$ be an $n$-partition such that $Q(\lambda)\subseteq Q$ and let $\pi$ be a $Q$-chain.  The monomials of the $\pi$-Demazure tableaux of shape $\lambda$ are linearly independent in the vector space $\Gamma_\lambda(\pi)$.
\end{theorem}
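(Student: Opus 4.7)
The plan is to prove linear independence by induction on the $Q$-chain $\pi$ in the Bruhat order. The base case is $\pi$ minimal, where $X(\pi)=\{\varphi(\pi)\}$ is a single point: the only $\pi$-Demazure tableau is $Y_\lambda(\pi)$ itself, and Lemma~\ref{nonzero} shows $\psi_\lambda(\pi)$ is nonzero at $s_\pi$. For the inductive step, assume the theorem for every $\rho\prec\pi$, and suppose $\sum_T c_T\,\bar\tau=0$ in $\Gamma_\lambda(\pi)$ summed over $\pi$-Demazure tableaux $T$. By Lemma~\ref{zero}, the polynomial $\sum_T c_T\tau$ vanishes on every ordered basis whose $Q$-flag lies in $X(\pi)$. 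Toward a contradiction, let $T^*$ be the $\leq$-maximal tableau with $c_{T^*}\neq 0$, using the total order on tabloids from Section~\ref{SPAN1}.

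I first treat the case $S(T^*)\prec Y_\lambda(\pi)$. Writing $S(T^*)=Y_\lambda(\sigma)$ and applying Lemma~\ref{stepdown} and Lemma~\ref{order}, there is a $Q$-chain $\rho$ covered by $\pi$ with $S(T^*)\preceq Y_\lambda(\rho)$, so $T^*$ is $\rho$-Demazure. I restrict the dependence to $X(\rho)\subseteq X(\pi)$; by Lemma~\ref{zero} applied at level $\rho$, every $\bar\tau_\rho$ with $T\not\preceq Y_\lambda(\rho)$ vanishes. Using the straightening of Propositions~\ref{downstream 2} and~\ref{downstream} at level $\rho$, each surviving $\bar\tau_\rho$ indexed by a non-$\rho$-Demazure $T$ is rewritten in the $\rho$-Demazure basis of $\Gamma_\lambda(\rho)$ as a combination of basis vectors indexed by tableaux strictly $\leq T$. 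Since $T^*$ is itself $\rho$-Demazure and $\leq$-maximal in the support, the coefficient of $\bar{\tau}_\rho$ (for $T=T^*$) in the resulting $\rho$-Demazure expansion is exactly $c_{T^*}$; by the inductive hypothesis at level $\rho$ this coefficient must vanish, contradicting $c_{T^*}\neq 0$. Iterating eliminates every $c_T$ whose tableau satisfies $S(T)\prec Y_\lambda(\pi)$, leaving a dependence supported only on tableaux with $S(T)=Y_\lambda(\pi)$.

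For these ``top'' tableaux, restriction to proper Schubert subvarieties offers no further reduction, so one must evaluate directly in the open Bruhat cell $C(\pi)$. I parameterize $C(\pi)$ by ordered bases $bs_\pi$, with $b$ ranging over a suitable affine slice of unipotent upper-triangular matrices; then $\tau_T(bs_\pi)$ is a polynomial in the entries of $b$. Using the scanning algorithm --- in particular Lemmas~\ref{scan path} and~\ref{key}, which control how scanning paths and rightmost columns propagate --- one identifies, for each $T$ with $S(T)=Y_\lambda(\pi)$, a distinguishing monomial in the entries of $b$ that appears in $\tau_T(bs_\pi)$ but in no $\tau_{T'}(bs_\pi)$ for a competing top tableau $T'$. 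Characteristic zero is used via polynomial continuity to extract each coefficient from the evaluation.

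The main obstacle is this top case. The rest of the induction reduces cleanly to smaller Schubert varieties through the order structure of the scanning tableau, but the case $S(T)=Y_\lambda(\pi)$ is genuinely new at each step of the induction and requires a direct leading-term analysis inside $C(\pi)$. The combinatorial heart of this analysis is the differentiation of scanning paths across distinct tableaux sharing the same scanning key; this is where the scanning-tableau viewpoint (rather than jeu-de-taquin) is essential for keeping the argument self-contained and confined to a single Schubert variety at each stage.
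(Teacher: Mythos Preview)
Your induction on $\pi$ differs from the paper's induction, which is on the number of columns of $\lambda$. The paper avoids your ``top'' case altogether: it proves the stronger Proposition~\ref{test} (non-vanishing at a point of $C(\pi)$) by peeling off the rightmost column. A minimal rightmost column among the given tableaux is used to construct an auxiliary $Q$-chain $\rho\preceq\pi$; on $C(\rho)$ the tableaux with other rightmost columns vanish by Lemma~\ref{zero}, the surviving tableaux are shown to be $\rho$-Demazure directly from the construction of $\rho$ together with Lemma~\ref{key}, and Lemma~\ref{chopD} then lets one factor off the common minor and apply induction on $\lambda'$. Finally Corollary~\ref{closure}, the characteristic-zero topological ingredient, lifts non-vanishing from $C(\rho)$ back to $C(\pi)$.

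Your Case~1 is broadly reasonable, but two steps are not justified by what the paper proves. You write $S(T^*)=Y_\lambda(\sigma)$; the paper never establishes that a scanning tableau is a $\lambda$-key (this is true, being the right-key property, but it is not among Lemmas~\ref{scan path}--\ref{chopD}). And when $Q(\lambda)\subsetneq Q$, the converse of Lemma~\ref{order} fails, so you cannot pass from $Y_\lambda(\sigma)\preceq Y_\lambda(\pi)$ to $\sigma\preceq\pi$ at the level of $Q$-chains without extra work; thus the invocation of Lemma~\ref{stepdown} is not immediate. Both points are repairable, but not for free.

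The real gap is Case~2. You assert that for tableaux with $S(T)=Y_\lambda(\pi)$ one can isolate, in the coordinates of $C(\pi)$, a monomial that appears in $\tau_T(bs_\pi)$ and in no competing $\tau_{T'}(bs_\pi)$; no construction of this monomial is given, and neither Lemma~\ref{scan path} nor Lemma~\ref{key} supplies one. The paper explicitly remarks at the start of Section~\ref{PREF} that no such direct leading-term proof of Theorem~\ref{LI} is known; it is precisely to avoid this obstacle that the paper changes the inductive parameter from $\pi$ to the shape $\lambda$. As it stands, your proposal concentrates the entire difficulty of the theorem into a single unproved claim.
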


A particular application of Theorem~\ref{LI} gives the linear independence of the tableau monomials for Theorem~\ref{tableaux}:

\begin{cor}\label{TabInd}
Let $\lambda$ be an $n$-partition.  The monomials of the semistandard tableaux of shape $\lambda$ form a basis of the vector space $\Gamma_\lambda$.
\end{cor}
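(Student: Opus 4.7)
The plan is to deduce this corollary from Theorem~\ref{LI} by choosing the parameters so that the Demazure quotient collapses to the full space and every semistandard tableau is automatically $\pi$-Demazure; the spanning half then comes for free from Theorem~\ref{span 1}. The key observation is that there is a unique pointwise-maximal $\lambda$-key, and using it as the indexing $\pi$ turns the basis statement of Theorem~\ref{LI} into the basis statement for all tableaux.

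Concretely, I would first take $Q := \{1, 2, \dots, n-1\}$, which trivially contains $Q(\lambda)$ and gives $q_\ell = \ell$ for $1 \leq \ell \leq n-1$. Define the $Q$-chain $\pi_{\max} := (P_1, \dots, P_{n-1})$ by $P_j := \{n-j+1, n-j+2, \dots, n\}$; one checks immediately that $|P_j| = j = q_j$ and $P_1 \subset P_2 \subset \cdots \subset P_{n-1}$, so $\pi_{\max}$ is a legitimate $Q$-chain.

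Next I would verify that $Y_\lambda(\pi_{\max})$ is pointwise maximal among all tabloids of shape $\lambda$. A column of $Y_\lambda(\pi_{\max})$ of length $\zeta_c < n$ is $Y(P_{\zeta_c})$, whose $r$-th entry from the top is $n - \zeta_c + r$; columns of length $n$ are $Y([n])$, whose $r$-th entry is also $r = n - \zeta_c + r$. On the other hand, for any tabloid $T$ of shape $\lambda$, the strict increase in column $c$ together with $T(\zeta_c, c) \leq n$ forces $T(r, c) \leq n - \zeta_c + r$ for all $(r,c) \in \lambda$. Hence $T \preceq Y_\lambda(\pi_{\max})$ for every tabloid $T$, which shows $Z_\lambda(\pi_{\max}) = 0$ and therefore $\Gamma_\lambda(\pi_{\max}) = \Gamma_\lambda$. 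By the same pointwise maximality, $S(T) \preceq Y_\lambda(\pi_{\max})$ holds for every semistandard tableau $T$, so every such $T$ is $\pi_{\max}$-Demazure.

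Applying Theorem~\ref{LI} with this $\pi_{\max}$ then yields that the monomials of the $\pi_{\max}$-Demazure tableaux, which are exactly the monomials of all semistandard tableaux of shape $\lambda$, are linearly independent in $\Gamma_\lambda(\pi_{\max}) = \Gamma_\lambda$. Combined with Theorem~\ref{span 1}, which states that these monomials span $\Gamma_\lambda$, the basis statement of the corollary follows. There is no genuine obstacle: all the substantive work is packaged in Theorem~\ref{LI}, and this corollary amounts to the observation that taking the top element in Bruhat order for the keys degenerates the Demazure basis theorem for an arbitrary Schubert variety into the classical tableau basis theorem for the entire flag manifold.
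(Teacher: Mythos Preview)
Your proposal is correct and follows essentially the same approach as the paper: choose a maximal $Q$-chain so that its $\lambda$-key dominates every tabloid, forcing $Z_\lambda(\pi)=0$ and making every tableau $\pi$-Demazure, then apply Theorem~\ref{LI} (together with Theorem~\ref{span 1}). The only cosmetic difference is that the paper takes $Q:=Q(\lambda)$ while you take $Q:=[n-1]$, but the resulting $\lambda$-keys coincide and the arguments are identical.
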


\begin{proof}
Take $Q:=Q(\lambda)$.
Let $\pi_0$ be the maximal $Q$-chain of subsets $P_j:=\{n-q_j+1, n-q_j+2, \dots, n\}$ for $1\leq j \leq k$.
It can be seen that every tabloid of shape $\lambda$ is dominated by the $\lambda$-key $Y_\lambda(\pi_0)$ in the partial order $\preceq$.
So every tableau is $\pi_0$-Demazure.
Here, the subspace $Z_\lambda(\pi_0)$ of $\Gamma_\lambda$ is $\{0\}$.
So we have $\Gamma_\lambda(\pi_0)=\Gamma_\lambda/Z_\lambda(\pi_0)=\Gamma_\lambda$.
\end{proof}

Fix a $Q$-chain $\pi$ from now on.
Let $\xi$ be any polynomial in $\Gamma_\lambda$.
Suppose we can find an ordered basis $f$ such that its $Q$-flag $\Phi_Q(f)$ lies in the Schubert variety $X(\pi)$ and $\xi(f) \neq 0$.
By Lemma~\ref{zero}, the latter property implies that $\xi \not \in Z_\lambda(\pi)$.
Then the residue $\overline \xi$ in $\Gamma_\lambda(\pi)=\Gamma_\lambda / Z_\lambda(\pi)$ is nonzero.
Since $C(\pi) \subseteq X(\pi)$, this observation implies that the theorem follows from:

\begin{prop} \label{test}
Let $\pi$ be a $Q$-chain.
Let $T_1, \dots, T_\ell$ be $\pi$-Demazure tableaux of shape $\lambda$.
For any nonzero coefficients $a_1, \dots, a_\ell$, there is some ordered basis $f$ with $\Phi_Q(f)$ in the Bruhat cell $C(\pi)$ such that $\displaystyle \sum \limits_{i=1}^\ell a_i \tau_i(f) \neq 0$.
\end{prop}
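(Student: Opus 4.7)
The plan is to induct on $\pi$ in the Bruhat order on $Q$-chains. For the base case, take $\pi = \pi_{\min}$ given by $P_j = \{1,\ldots,q_j\}$. Then $Y_\lambda(\pi_{\min})(r,c) = r$, while the column-strict condition forces $S(T)(r,c) \geq r$ for every tabloid $T$; thus $S(T) \preceq Y_\lambda(\pi_{\min})$ compels $S(T) = Y_\lambda(\pi_{\min})$. By Lemma~\ref{key} the rightmost column of $T$ agrees with that of $S(T)$, and iterating Lemmas~\ref{chopD} and~\ref{delete} column-by-column identifies $Y_\lambda(\pi_{\min})$ as the unique $\pi_{\min}$-Demazure tableau of shape $\lambda$. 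So $\ell = 1$ and $\tau_1 = \psi_\lambda(\pi_{\min})$, and any $f$ with $\Phi_Q(f) \in C(\pi_{\min})$ works by Lemma~\ref{nonzero}.

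For the inductive step, assume the proposition for every $Q$-chain strictly below $\pi$. Apply Lemma~\ref{stepdown} with $\pi_{\min} \prec \pi$ to obtain a reflection $\sigma_{ij}$ with $\rho := \sigma_{ij}\pi \prec \pi$. By Lemma~\ref{order} every $\rho$-Demazure tableau is $\pi$-Demazure, so partition the index set into $I_\rho$ (the $\rho$-Demazure indices) and $I_{\mathrm{new}}$. The inductive hypothesis at $\rho$ applied to $\{T_i\}_{i\in I_\rho}$ with coefficients $\{a_i\}_{i\in I_\rho}$ produces an ordered basis $g_0$ with $\Phi_Q(g_0)\in C(\rho)$ and $\sum_{i\in I_\rho}a_i\tau_i(g_0) \neq 0$. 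I would then build a one-parameter family $f(t)$ of ordered bases with $\Phi_Q(f(t)) \in C(\pi)$ for $t$ in a Zariski-open subset of $\mathbb{C}$ and with $\Phi_Q(f(t))$ limiting projectively to $\Phi_Q(g_0) \in C(\rho)$ as $t \to 0$: such a family comes from perturbing $g_0$ in the root direction associated with $\sigma_{ij}$ (for instance a lower-unipotent factor $I + tE_{c,d}$ with $(c,d)$ dictated by the reflection) and rescaling. The polynomial $h(t) := \sum_{i=1}^\ell a_i\tau_i(f(t))$ can then be expanded in powers of $t$; the goal is to exhibit a nonzero coefficient and conclude $h(t) \neq 0$ for generic $t$, producing the required basis.

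The main obstacle is controlling the contributions from $I_{\mathrm{new}}$ to the $t$-coefficients of $h(t)$. A tableau $T_i$ being $\pi$-Demazure but not $\rho$-Demazure means $S(T_i) \not\preceq Y_\lambda(\rho)$, which in general does not force $T_i \not\preceq Y_\lambda(\rho)$; hence $\tau_i(g_0)$ need not vanish via Lemma~\ref{zero}, and the constant term of $h(t)$ may freely mix contributions from $I_\rho$ and $I_{\mathrm{new}}$. The scanning-path machinery is what makes the induction work: starting from a location where $S(T_i)$ first exceeds $Y_\lambda(\rho)$ and repeatedly invoking Lemma~\ref{scan path} produces an explicit chain of locations in $T_i$ whose values climb in a controlled way above the pivot threshold of $s_\rho$ in the relevant columns. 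Translated through the Laplace-expansion bookkeeping of the minors of $f(t)$, this chain controls the $t$-degree of $\tau_i(f(t))$ in terms of the right key $\sigma_{T_i}$ encoded by $S(T_i)$. Sorting the tableaux by right key yields a triangular structure in the $t$-expansion, isolating a leading coefficient fed only by tableaux whose right key is a specific $\sigma \preceq \pi$, which is then handled by the inductive hypothesis at $\sigma$ (or by a direct pivot-count analogous to the proof of Lemma~\ref{zero}). Making this scanning-path-to-$t$-degree translation precise is the technical heart of the proof and is where the scanning-tableau formulation cleanly replaces the jeu-de-taquin right-key calculation of \cite{RS2}.
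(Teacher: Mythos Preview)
Your proposal has a genuine gap precisely where you flag it. You choose $\rho=\sigma_{ij}\pi$ by a reflection determined solely by $\pi$, independently of the tableaux $T_1,\dots,T_\ell$, and then hope to separate the contributions of $I_\rho$ and $I_{\mathrm{new}}$ in a $t$-expansion via an asserted but unproven triangularity of $t$-degrees indexed by right keys. You yourself concede that the constant term of $h(t)$ mixes both index sets freely. The scanning-path chain from Lemma~\ref{scan path} was built for a different purpose (constructing the shuffle region $\mu$ in Proposition~\ref{turnb}); it does not obviously translate into a $t$-degree count for $\tau_i(f(t))$, and your final paragraph is a promise rather than an argument. Note also that if $I_\rho=\varnothing$ your inductive hypothesis gives no starting basis $g_0$ at all.

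The paper sidesteps this obstacle by inducting on the number of columns of $\lambda$, not on the Bruhat order of $\pi$, and by choosing $\rho$ from the tableaux rather than from a reflection. Pick a $\preceq$-minimal rightmost column among the $T_i$; say $T_1,\dots,T_m$ share it. Build $\rho\preceq\pi$ so that $Y(R_h)$ equals this minimal column and, for $j>h$, the set $R_j$ is obtained from $P_j$ by lowering entries toward $R_h$. Then for $i>m$ the rightmost column of $T_i$ is not $\preceq Y(R_h)$, so $T_i\not\preceq Y_\lambda(\rho)$ and $\tau_i\in Z_\lambda(\rho)$ vanishes on all of $X(\rho)$ by Lemma~\ref{zero}: the troublesome tableaux are killed at the \emph{tabloid} level, not the right-key level, which is exactly the distinction that blocked your approach. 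For $i\le m$ one checks $S(T_i)\preceq Y_\lambda(\rho)$ directly (this is where Lemma~\ref{key} enters), so Lemma~\ref{chopD} makes $T_1',\dots,T_m'$ $\rho$-Demazure of shape $\lambda'$ and the column-count induction gives $f$ with $\Phi_Q(f)\in C(\rho)$ and $\sum_{i\le m}a_i\tau_i'(f)\ne0$. Multiplying back by the common rightmost minor (nonzero by Lemma~\ref{nonzero}) yields $\sum_{i=1}^\ell a_i\tau_i(f)\ne0$ with $\Phi_Q(f)\in C(\rho)\subseteq X(\pi)$, and Corollary~\ref{closure} (by contraposition) then supplies a basis in $C(\pi)$.
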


We will prove this proposition using induction on the number of columns of $\lambda$.

Before proving this proposition, we now elaborate on the ``efficiency" claim from Section~\ref{DEFS}.
Let $N$ denote the number of tabloids of shape $\lambda$.
Consider the map from the set $GL_n$ to $\mathbb{C}^N$ given by the evaluation of the sequence of all tabloid monomials of shape $\lambda$.
The coordinatization $\Omega_\lambda: \mathcal{F}\ell_Q \to \mathbb{P}(\mathbb{C}^N)$ of Section~\ref{MON} was given by observing that the set of matrix representatives for a given flag maps to a unique projective equivalence class in $\mathbb{C}^N$.
This coordinatization is inefficient:  By the spanning Theorem~\ref{span 1}, the coordinatization of $\mathcal{F}\ell_Q$ in $\mathbb{C}^N$ up to scalar multiples is contained in a subspace $V$ that is parameterized by the coordinates corresponding to the tableau monomials.
By Proposition~\ref{test} applied to $\pi:=\pi_0$ as in the proof of Corollary~\ref{TabInd}, this subspace $V$ is the minimal subspace that contains the image of $\mathcal{F}\ell_Q$.
So we can actually coordinatize $\mathcal{F}\ell_{Q}$ with $\mathbb{P}(V) \subset \mathbb{P}(\mathbb{C}^N)$.
Let $M$ denote the number of tableau monomials of shape $\lambda$.
Then $V \cong \mathbb{C}^M$ and one may more efficiently coordinatize $\mathcal{F}\ell_Q$ in $\mathbb{P}(\mathbb{C}^M)$ by evaluating only the sequence of tableau monomials.

But this new coordinatization is inefficient for a proper Schubert variety $X(\pi)$.
By Theorem~\ref{span 2}, the coordinatization of $X(\pi)$ in $\mathbb{C}^M$ up to scalar multiples is contained in a subspace $V(\pi)$ that is parameterized by the coordinates corresponding to the $\pi$-Demazure monomials.
By Proposition~\ref{test}, this subspace $V(\pi)$ is the minimal subspace that contains the image of $X(\pi)$.
So we can actually coordinatize $X(\pi)$ with $\mathbb{P}(V(\pi)) \subset \mathbb{P}(\mathbb{C}^M)$.
Let $M(\pi)$ denote the number of $\pi$-Demazure monomials of shape $\lambda$.
Then $V(\pi) \cong \mathbb{C}^{M(\pi)}$, and one may more efficiently coordinatize $X(\pi)$ in $\mathbb{P}(\mathbb{C}^{M(\pi)})$ by evaluating only the sequence of $\pi$-Demazure monomials.

Now we assume our field has characteristic zero.
The corollary to the following proposition is used as the last step in the proof of Proposition~\ref{test}.
Here the limit in the set of ordered bases of $\mathbb{C}^n$ is found with respect to the usual metric on the $n^2$ entries of ordered bases when they are  viewed as $n \times n$ complex matrices.

\begin{prop}\label{path}
Let $\pi$ be a $Q$-chain.
Let $1 \leq i < j \leq n$ and use the reflection $\sigma_{ij}$ to define $\rho:=\sigma_{ij}\pi$.
Let $F$ be a $Q$-flag in the Bruhat cell $C(\rho)$.
If $\rho \prec \pi$, then there is a path $\beta(t)$ in the set of ordered bases of $\mathbb{C}^n$ with $\Phi_Q(\beta(t))\in C(\pi)$ for $0<t<\frac12$ such that $F = \Phi_Q\left(\lim \limits_{t \to 0} \beta(t)\right)$.
\end{prop}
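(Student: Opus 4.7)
The plan is to construct $\beta(t)$ from the unique $Q$-preferred basis $f$ of $F$ (available by Lemma~\ref{bijection}) via a single one-parameter column operation. By Lemma~\ref{chain} the pivot coordinates of the columns of $f$ are the entries of $\overline{\rho}$, so there is a unique column $c$ with pivot $i$ and a unique column $d$ with pivot $j$. The first step is to analyze what $\rho=\sigma_{ij}\pi\prec\pi$ says level by level: because the $P_m$ are nested and $Y(\rho)\prec Y(\pi)$, the levels at which $\sigma_{ij}$ acts nontrivially form a contiguous interval $\ell\le m\le\ell'$, and at each such level $j\in P_m$ and $i\notin P_m$, so $R_m=(P_m\setminus\{j\})\cup\{i\}$; at all other levels $R_m=P_m$. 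Consequently $c$ lies in the $\ell$th $Q$-carrel and $d$ lies in the $(\ell'+1)$st $Q$-carrel, so $q_{\ell-1}<c\le q_\ell\le q_{\ell'}<d\le q_{\ell'+1}$.

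Next I would set $\beta(t):=f\cdot(I+tE_{d,c})$, where $E_{d,c}$ denotes the matrix with a single $1$ at position $(d,c)$ and $0$s elsewhere. Right-multiplication by this factor replaces column $c$ of $f$ by $f_c+tf_d$ and leaves the other columns untouched. Since $I+tE_{d,c}$ is triangular with $1$s on the diagonal, $\beta(t)$ is invertible for every $t$, $\beta(0)=f$, and hence $\Phi_Q(\lim_{t\to 0}\beta(t))=\Phi_Q(f)=F$. Because $f_c$ has pivot $i$ while $f_d$ has pivot $j>i$, the modified column $f_c+tf_d$ has pivot $j$ whenever $t\neq 0$.

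To show $\Phi_Q(\beta(t))\in C(\pi)$ for $0<t<\tfrac12$, I would verify that for each $1\le m\le k$ the subspace $V_m(t)$ spanned by the first $q_m$ columns of $\beta(t)$ has its set of possible pivot coordinates equal to $P_m$; by Lemmas~\ref{chain} and \ref{bijection} this characterizes membership in the cell $C(\pi)$. Writing $V_m$ for the span of the first $q_m$ columns of $f$ itself, the verification splits into three cases. \emph{(i)} For $m<\ell$ one has $c>q_m$, so $V_m(t)=V_m$, whose pivot set is $R_m=P_m$. \emph{(ii)} For $\ell\le m\le\ell'$ one has $c\le q_m<d$, so $V_m(t)$ is spanned by the $q_m-1$ unchanged columns together with $f_c+tf_d$; these $q_m$ vectors have pairwise distinct pivots $(R_m\setminus\{i\})\cup\{j\}$ (since $j\notin R_m$ for $m\le\ell'$), hence are linearly independent and realize the pivot set $(R_m\setminus\{i\})\cup\{j\}=P_m$. \emph{(iii)} For $m>\ell'$ one has both $c\le q_m$ and $d\le q_m$, so $f_c+tf_d\in V_m$, and a dimension count forces $V_m(t)=V_m$, whose pivot set is $R_m=P_m$.

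The main obstacle, and the reason for adding $tf_d$ rather than the more obvious $te_j$, is case~\emph{(iii)}: in general $e_j\notin V_m$, so a $te_j$ perturbation would actually move the level-$m$ subspace and could destroy its pivot set. Using $f_d$ instead holds the inactive levels ($m<\ell$ and $m>\ell'$) rigidly fixed while producing exactly the desired $i\leftrightarrow j$ pivot swap at the active levels.
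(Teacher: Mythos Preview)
Your proof is correct and takes a genuinely different route from the paper's. The paper perturbs the permutation matrix $s_\pi$ itself: it replaces the $2\times 2$ block at rows $(i,j)$ and the corresponding columns $(c_j,c_i)$ by $\left(\begin{smallmatrix}1-t & t\\ t & 1-t\end{smallmatrix}\right)$, checks via one column subtraction that the resulting $\gamma(t)$ lies in $C(\pi)$ for $0<t<\tfrac12$, and then left-multiplies by the $b\in B$ with $F=\Phi_Q(bs_\rho)$ to obtain $\beta(t):=b\gamma(t)$; the limit $t\to 0$ gives (up to reordering within carrels) $bs_\rho$, hence $F$. You instead perturb the $Q$-preferred basis $f$ of $F$ directly by the right shear $I+tE_{d,c}$, and verify cell membership via a clean three-case pivot-set analysis keyed to the contiguous active interval $\ell\le m\le\ell'$ that you extract from $\rho=\sigma_{ij}\pi\prec\pi$. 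Your construction is arguably more transparent: it works for every $t\neq 0$ (the paper's path degenerates at $t=\tfrac12$), it avoids the auxiliary $b$, and your remark about why $tf_d$ succeeds where $te_j$ would not pinpoints exactly the role of the inactive levels $m>\ell'$. The paper's version has the minor structural advantage that the one-parameter family $\gamma(t)$ is independent of $F$, with all dependence on $F$ concentrated in the final left multiplication by $b$. One small citation point: your appeal to ``Lemmas~\ref{chain} and~\ref{bijection}'' for the pivot-set characterization of $C(\pi)$ implicitly also uses the content of Fact~\ref{union}'s proof.
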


\begin{proof}
Let $s_\pi$ be the $n \times n$ permutation matrix associated to $\pi$ as in Section~\ref{DEFS}.  
Construct a path $\gamma(t)$ in the space of $n \times n$ matrices by altering $s_\pi$ as follows:
Let $c_i$ and $c_j$ be the column indices such that entries  $(i,c_i)$ and $(j, c_j)$ of $s_\pi$ are $1$.
Since $\rho \neq \pi$, columns $c_i$ and $c_j$ are in different $Q$-carrels of $s_\pi$.
And since $\rho \prec \pi$, we have $c_j < c_i$.
The submatrix at rows $(i,j)$ and columns $(c_j, c_i)$ of $s_\pi$ is \small $\begin{bmatrix} 0 & 1 \\ 1 & 0 \end{bmatrix}$\normalsize.
Change these entries to \small$\begin{bmatrix} 1-t & t \\ t & 1-t \end{bmatrix}$\normalsize.

For $0<t<\frac12$, compute the $Q$-preferred basis of $\gamma(t)$ by subtracting $\frac{1-t}{t}$ times column $c_j$ from column $c_i$ and re-scaling.
Then we see that $\Phi_Q\left(\gamma(t)\right)$ is still in $C(\pi)$.
However, the limit $\lim \limits_{t\to 0} \gamma(t)$ is the permutation matrix formed from $s_\pi$ by switching columns $c_i$ and $c_j$.
Up to a reordering of columns within the affected $Q$-carrels, this is the permutation matrix $s_\rho$ for the $Q$-chain $\rho$.
So the $Q$-flag for this limit is $\Phi_Q(s_\rho)$.
Let $b\in B$ be such that $F=\Phi_Q(bs_\rho)$.
Here $F$ is also $\Phi_Q\left(b \lim \limits_{t\to 0} \gamma(t) \right)$.
Define $\beta(t):=b\gamma(t)$.
Then we have $\Phi_Q(\beta(t)) \in C(\pi)$ for $0 < t < \frac12$.
Note that $\lim \limits_{t\to 0} b \gamma(t)=b \lim \limits_{t\to 0} \gamma(t)$, since the entries in this product by $b$ are linear combinations of the original matrix entries. 
Finally we have $\Phi_Q\left( \lim \limits_{t\to 0}\beta(t) \right)=\Phi_Q\left(b \lim \limits_{t\to 0} \gamma(t) \right)=F$. 
\end{proof}

The following corollary relates the vanishing of a polynomial in $\Gamma_\lambda$ on the Bruhat cell $C(\pi)$ to its vanishing on the Schubert variety $X(\pi)$.
Its proof uses the fact that the application of a polynomial from $\mathbb{C}[x_{ij}]$ commutes with forming a limit in the $n \times n$ complex matrices.

\begin{cor} \label{closure}
Let $\pi$ be a $Q$-chain.
Let $f$ be an ordered basis with $\Phi_Q(f)\in X(\pi)$ and fix a polynomial $\xi \in \Gamma_\lambda$.
If $\xi(h)=0$ for every ordered basis $h$ with $\Phi_Q(h)\in C(\pi)$, then $\xi(f)=0$.
\end{cor}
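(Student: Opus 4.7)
The plan is to induct on the length of a descending Bruhat chain from $\pi$ down to the unique $Q$-chain $\rho \preceq \pi$ with $\Phi_Q(f) \in C(\rho)$; such a chain of finite length exists by iterating Lemma~\ref{stepdown}. The base case $\rho = \pi$ is the hypothesis. For the inductive step it is enough to establish the following propagation claim: if $\xi$ vanishes on every ordered basis $g$ with $\Phi_Q(g) \in C(\pi)$, and if $\pi' = \sigma_{ij}\pi \prec \pi$ is any one-step Bruhat predecessor furnished by Lemma~\ref{stepdown}, then $\xi$ also vanishes on every ordered basis with flag in $C(\pi')$. Iterating this one-step claim along the descending chain and then applying the final conclusion to $f$ yields $\xi(f)=0$.

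For the propagation itself, fix any ordered basis $g$ with $G := \Phi_Q(g) \in C(\pi')$. Proposition~\ref{path} produces a path $\beta(t)$ of ordered bases with $\Phi_Q(\beta(t)) \in C(\pi)$ for $0 < t < \tfrac12$ and $\Phi_Q(\lim_{t\to 0}\beta(t)) = G$. By hypothesis $\xi(\beta(t)) = 0$ for each such $t$. Since $\xi$ is a polynomial in the matrix entries, evaluation commutes with the limit (the one analytic fact the introduction singles out), so $\xi(g^*) = 0$ where $g^* := \lim_{t\to 0}\beta(t)$. Finally, $\Phi_Q(g^*) = \Phi_Q(g)$, so Lemma~\ref{projective} provides a single nonzero scalar $\alpha$ such that every tabloid monomial $\tau$ satisfies $\tau(g^*) = \alpha\,\tau(g)$; extending linearly to the element $\xi \in \Gamma_\lambda$ gives $\xi(g^*) = \alpha\,\xi(g)$, and hence $\xi(g) = 0$.

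The substantive ingredient has been absorbed into Proposition~\ref{path}; the corollary itself requires only the inductive bookkeeping above, polynomial continuity, and the observation that Lemma~\ref{projective}'s common scaling factor passes from individual tabloid monomials to arbitrary elements of $\Gamma_\lambda$ by linearity. I do not anticipate any real obstacle beyond confirming that each use of Proposition~\ref{path} is applied to a genuine strict Bruhat predecessor, which is exactly what Lemma~\ref{stepdown} guarantees.
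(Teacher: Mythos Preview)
Your proposal is correct and follows essentially the same route as the paper: build a descending chain $\pi=\pi_0\succ\pi_1\succ\cdots\succ\pi_m=\rho$ via Lemma~\ref{stepdown}, then at each step use Proposition~\ref{path} to produce a path in $C(\pi_\ell)$ limiting to the desired flag in $C(\pi_{\ell+1})$, invoke polynomial continuity, and transfer the vanishing between representatives with Lemma~\ref{projective}. Your explicit remark that the single scalar from Lemma~\ref{projective} extends by linearity from tabloid monomials to all of $\Gamma_\lambda$ is a point the paper uses silently.
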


\begin{proof}
Since $\Phi_Q(f) \in X(\pi)$, we have $\Phi_Q(f) \in C(\rho)$ for some $Q$-chain $\rho \preceq \pi$.
The conclusion is trivial if $\rho=\pi$, so suppose that $\rho \prec \pi$.
By Lemma \ref{stepdown}, there exist $1 \leq i<j \leq n$ such that $\pi_1:=\sigma_{ij} \pi$  satisfies $\rho \preceq \pi_1 \prec \pi$.  
Since there are finitely many $Q$-chains, we can iterate Lemma \ref{stepdown} until we have a sequence of reflected $Q$-chains $\rho=\pi_m \prec \pi_{m-1} \prec \dots \prec \pi_1 \prec \pi=:\pi_0$ for some $m>0$.  
Let $\ell$ run from $0$ to $m-1$ and iterate the following:
Let $h_{\ell+1}$ be any ordered basis with $\Phi_Q(h_{\ell+1}) \in C(\pi_{\ell+1})$.
Denote this $Q$-flag by $F$.
Here the proposition constructed a path $\beta(t)$ with  $\Phi_Q(\beta(t)) \in C(\pi_\ell)$ for $0 < t < \frac12$ such that $F = \Phi_Q\left(\lim \limits_{t \to 0} \beta(t)\right)$.
By induction: For every ordered basis $h_\ell$ with $\Phi_Q(h_\ell)\in C(\pi_\ell)$, we had $\xi(h_\ell)=0$.
Since $\xi \in \mathbb{C}[x_{ij}]$, we have $\xi\left(\lim \limits_{t \to 0} \beta(t)\right)= \lim \limits_{t \to 0} \xi(\beta(t))= \lim \limits_{t \to 0} 0=0$.
By Lemma~\ref{projective} applied to the ordered bases $\lim \limits_{t \to 0} \beta(t)$ and $h_{\ell+1}$ for $F$, we have $\xi(h_{\ell+1})=0$.
When beginning the $\ell=m-1$ iteration, take $h_{\ell+1}:=f$.
\end{proof}

Now we are prepared to prove Proposition~\ref{test}:

\begin{proof}[Proof of Proposition~\ref{test}]
The base case for our induction on the number of columns of $\lambda$ is when every column of $\lambda$ has length $n$, perhaps vacuously.
Here the only tableau consists of the columns $Y([n])$.
It is $\pi$-Demazure.
Its monomial is a nonnegative power of the determinant, which is nonzero.
So in this case the proposition holds.

Suppose $\lambda$ has at least one column of length less than $n$.
As for Lemma~\ref{delete}, let $\lambda'$ denote the partition obtained from $\lambda$ by deleting the rightmost column of its shape.
Note that $Q(\lambda') \subseteq Q$.
Suppose by induction that for any $Q$-chain $\rho$ and linear combination $\xi$ of $\rho$-Demazure monomials of shape $\lambda'$, there is some ordered basis $f$ with $\Phi_Q(f)\in C(\rho)$ such that $\xi(f)\neq 0$.

Write $\pi=(P_1, P_2, \dots, P_k)$.
Let $1\leq h \leq k$ be minimal such that $q_h \in Q(\lambda)$.
Examine the rightmost columns of the tableaux $T_1, \dots, T_\ell$ and identify a minimal column among these with respect to the order $\preceq$.  
Suppose $m$ of the tableaux share this minimal column, which has length $q_h$.  
Reindex the tableaux so that $T_1, \dots, T_m$ have this rightmost column.
We now form a $Q$-chain $(R_1, \dots, R_k)=:\rho$ from this minimal column and $\pi$ in such a way that $\rho$ is small enough to have $\rho \preceq \pi$ and large enough to have $Y_\lambda(\rho)\succeq S(T_i) $ for $1 \leq i \leq m$.
For $1 \leq j \leq h$, take $R_j$ to be the set of the $q_j$ smallest tableau values in this minimal column.
So this minimal column is $Y(R_h)$.
By Lemma~\ref{key} this column is also the rightmost column of $S(T_i)$ for $1 \leq i \leq m$.
Since $S(T_i)\preceq Y_\lambda(\pi)$, we have $Y(R_h)\preceq Y(P_h)$.
This implies that $Y(R_j) \preceq Y(P_j)$ for any $1 \leq j \leq h$.

For $h+1 \leq j \leq k$, form $R_{j}$ by evolving $P_{j}$ using $R_h$ as follows:
List the elements $r_1 < \dots < r_{q_h}$ of $R_h$ in increasing order.
As $t$ runs from $1$ to $q_h$, successively replace the smallest element of $P_{j}$ that is larger than or equal to $r_t$ with the element $r_t$.
Such an element exists since $P_j \supset P_h$ and $Y(P_h) \succeq Y(R_h)$.
Visualize this replacement using the column $Y(P_j)$:  by our replacement rule, replacing this value in $Y(P_j)$ by $r_t$ in the same position preserves the property that the filling increases down the column.
Define $R_{j}$ to be the set resulting from the $q_h$ iteration.
Then the column $Y(R_j)$ is produced from $Y(P_j)$ by decreasing some of its values to values from $R_h$ without reordering.
So we have $Y(R_{j})\preceq Y(P_{j})$.
For $h+1 \leq j \leq q_k$, we can see that $R_{j-1} \subset R_j$ as follows:
Let $r \in R_{j-1}$.
If $r \in R_h$, then $r \in R_j$.
On the other hand if $r \not \in R_h$, then $r \in P_{j-1} \subset P_j$.
Both $R_{j-1}$ and $R_j$ were formed by replacing elements of $P_{j-1}$ and $P_j$ respectively with elements of the same set $R_h$.
The element $r$ was not replaced when $R_{j-1}$ was formed from $P_{j-1}$, so it also was not replaced when $R_j$ was formed from $P_{j}$.
Then $\rho:= (R_1, R_2, \dots, R_k)$ is a $Q$-chain, and $\rho \preceq \pi$.

Fix $m+1 \leq i \leq \ell$.
The rightmost column of $Y_\lambda(\rho)$ is $Y(R_h)$, which was minimal among the rightmost columns of $T_1, \dots, T_\ell$.
Since $T_i$ does not share this minimal rightmost column, we can see that $T_i \not \preceq Y_\lambda(\rho)$. 
So by definition we have $\tau_{m+1}, \dots, \tau_{\ell} \in Z_\lambda(\rho)$.
Then by Lemma~\ref{zero}, at any ordered basis $f$ with $\Phi_Q(f) \in X(\rho)$ we have $\sum_{i=1} ^\ell a_i\tau_i(f) =  \sum_{i=1} ^m a_i\tau_i (f)$.

We want to show that each of $T_1, \dots, T_m$ is $\rho$-Demazure.
Fix $1 \leq i \leq m$.
We know that $S(T_i) \preceq Y_\lambda(\pi)$.
Fix a location $(b,c) \in \lambda$. 
From the construction of $\rho$, the value $Y_\lambda(\rho)[b,c]$ is $Y_\lambda(\pi)[b,c]$ or else a value from $R_h$.
Suppose $Y_\lambda(\rho)[b,c]=Y_\lambda(\pi)[b,c]$.
Since $S(T_i)\preceq Y_\lambda(\pi)$, we have $S(T_i)[b,c] \leq Y_\lambda(\rho)[b,c]$.
Now suppose $Y_\lambda(\rho)[b,c]$ is some value $r \in R_h$.
By Lemma~\ref{key}, the value $r$ appears in column $c$ of $S(T_i)$.
Let $1 \leq d \leq \zeta_c$ denote the row index such that $r=S(T_i)[d,c]$.
Since $S(T_i) \preceq Y_\lambda(\pi)$, we have $Y_\lambda(\pi)[d,c] \geq r$.
But from the construction of $\rho$, the value $Y_\lambda(\pi)[b,c]$ is the smallest value in its column larger than or equal to $r$.
Since the filling $Y_\lambda(\pi)$ increases down each column, we have $b \leq d$.
Hence $S(T_i)[b,c] \leq S(T_i)[d,c] = r = Y_\lambda(\rho)[b,c]$.
Therefore $S(T_i) \preceq Y_\lambda(\rho)$ in both cases.
So the tableaux $T_1, \dots, T_m$ are $\rho$-Demazure.
Then by Lemma \ref{chopD} the corresponding tableaux $T_1', \dots, T_m'$ of shape $\lambda'$ are $\rho$-Demazure.
By the inductive hypothesis, there is an ordered basis $f$ with $\Phi_Q(f)\in C(\rho)$ such that $\sum_{i=1} ^m a_i \tau_i'(f) \neq 0$.

The rightmost column of $Y_\lambda(\rho)$ is $Y(R_h)$.
Hence the minor specified by $Y(R_h)$ is a factor of the monomial $\psi_\lambda(\rho)$.
By Lemma~\ref{nonzero}, the value of $\psi_\lambda(\rho)$ at $f$ is nonzero.
Hence the minor specified by $Y(R_h)$ has some value $\alpha \neq 0$ at $f$.
Therefore the valuation $ \sum_{i=1} ^\ell a_i \tau_i(f)= \sum_{i=1} ^m a_i \tau_i(f)=\alpha \sum_{i=1} ^m a_i \tau_i'(f)$.
Thus we have $\sum_{i=1} ^\ell a_i \tau_i(f)\neq 0$, where $\Phi_Q(f) \in C(\rho)$.

For the sake of contradiction, suppose that for every ordered basis $h$ with $\Phi_Q(h)\in C(\pi)$ we have $ \sum_{i=1} ^\ell a_i \tau_i(h)=0$.
By design we have $\rho\preceq \pi$, and so $\Phi_Q(f) \in X(\pi)$.
Then by Corollary~\ref{closure} we also have $\sum_{i=1} ^\ell a_i \tau_i(f)= 0$, a contradiction.
\end{proof}

\section{Summation formula for Demazure polynomials}\label{CHAR}

Let $H$ be the abelian subgroup of $B$ consisting of its diagonal matrices $diag(y_1^{-1}, \dots, y_n^{-1})$.
The group $H$ acts on ordered bases from the left.
This induces an action on our polynomial subspace $\Gamma_\lambda$ of $\mathbb{C}[x_{ij}]$:
For a monomial $\tau \in \Gamma_\lambda$, an element $h \in H$, and an ordered basis $f$ of $\mathbb{C}^n$ in matrix form, one has $h.\tau(f):=\tau(h^{-1}f)$.
Since $\tau$ is a product of minors and the multiplication here by $h^{-1}$ scales the rows of $f$, we see that $\mathbb{C}\tau$ is $H$-invariant.
Given a tableau $T$, let $c_i$ be the number of values in $T$ equal to $i$.
Then the character of $H$ acting on $\mathbb{C}\tau$ is $y^{T}:=\prod_{i=1}^n y_i^{c_i}$.

Now fix a $Q$-chain $\pi$.
The subspace $Z_\lambda(\pi)$ is $H$-invariant.
The character of the induced representation on $\Gamma_\lambda(\pi):=\Gamma_\lambda/Z_\lambda(\pi)$ follows from Theorem~\ref{main}:

\begin{cor}\label{character}
The character of $H$ on $\Gamma_\lambda(\pi)$ is $\sum y^{T}$, where the sum runs over all $\pi$-Demazure tableaux of shape $\lambda$.
\end{cor}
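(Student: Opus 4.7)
The plan is to leverage Theorem~\ref{main} together with the observation that every tabloid monomial already spans an $H$-stable line in $\Gamma_\lambda$. Once both facts are in place, the character of $H$ on $\Gamma_\lambda(\pi)$ is simply the sum of the weights of the basis vectors picked out by Theorem~\ref{main}, namely the $\pi$-Demazure monomials.

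First I would verify directly that for any tabloid $T$ of shape $\lambda$ with monomial $\tau$, the line $\mathbb{C}\tau$ carries the character $y^T$. Given $h = \mathrm{diag}(y_1^{-1}, \dots, y_n^{-1}) \in H$, left multiplication by $h^{-1}$ scales row $i$ of any ordered basis $f$ by $y_i$. The $\zeta_c$-initial minor indexed by column $c$ of $T$ is therefore scaled by $\prod_{r=1}^{\zeta_c} y_{T(r,c)}$, and multiplying over all columns gives a total scaling of $\prod_{(r,c)\in\lambda} y_{T(r,c)} = \prod_{i=1}^{n} y_i^{c_i} = y^{T}$, where $c_i$ counts the occurrences of $i$ in $T$. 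Hence $h.\tau = y^{T}\tau$, as announced in the introductory paragraph of Section~\ref{CHAR}.

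Next I would check that the $H$-action descends to the quotient $\Gamma_\lambda(\pi) = \Gamma_\lambda/Z_\lambda(\pi)$. By definition $Z_\lambda(\pi)$ is spanned by the monomials $\tau$ of tabloids $T$ with $T \not\preceq Y_\lambda(\pi)$, and each such line is $H$-invariant by the previous step; therefore $Z_\lambda(\pi)$ is an $H$-submodule, and the residues $\overline{\tau}$ in $\Gamma_\lambda(\pi)$ satisfy $h.\overline{\tau} = y^{T}\overline{\tau}$. Invoking Theorem~\ref{main}, the residues of the $\pi$-Demazure monomials form a basis of $\Gamma_\lambda(\pi)$ consisting of $H$-weight vectors, so taking the trace of $h$ yields the asserted summation $\sum_{T} y^{T}$ over all $\pi$-Demazure tableaux of shape $\lambda$.

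I do not expect any real obstacle: the essential content of the corollary resides in Theorem~\ref{main}, and the remainder is bookkeeping. The only place where care is needed is the weight computation, where one must match the row scaling on the ordered basis side with the multiset of entries of $T$ on the combinatorial side; with the convention $h = \mathrm{diag}(y_1^{-1},\dots,y_n^{-1})$ already fixed in the excerpt, this amounts to keeping track of signs in an exponent and presents no genuine difficulty.
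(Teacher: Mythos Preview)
Your proposal is correct and matches the paper's approach: the paper presents the corollary as an immediate consequence of Theorem~\ref{main} together with the observations (recorded just before the corollary) that each monomial line $\mathbb{C}\tau$ is $H$-stable with character $y^{T}$ and that $Z_\lambda(\pi)$ is $H$-invariant. You have simply made those observations explicit, which is fine.
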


This polynomial is the \emph{Demazure polynomial} of \cite{PW}.
This terminology will be justified in the appendix, where we note that $\Gamma_\lambda(\pi)^*$ is a Demazure module for $B$.
Given that the scanning tableau of a tableau $T$ is the right key of $T$,  this polynomial is also the ``key polynomial" of Lascoux and Sch\"utzenberger \cite[Theorem 1]{RS1}.

\section{Appendix:  Contemporary terminology}\label{APP}

Here we provide a dictionary for relating the objects of this paper to the contemporary algebraic geometry literature.
We also identify the character from Section~\ref{CHAR} using the representation theory of $GL_n$.
Continue to use the definitions from Section~\ref{CHAR}.
Here we require an algebraically closed field of characteristic zero; we use $\mathbb{C}$.

Our subgroup $H$ of diagonal matrices in $GL_n$ is called the \emph{torus}, and our subgroup $B$ of upper triangular matrices in $GL_n$ is called the \emph{Borel} subgroup.
Fix a nonempty $Q \subseteq \{1, 2, \dots, n-1\}$ and set $k:=|Q|$.
Let $E$ be the $Q$-flag of subspaces $V_j=$span$(\{e_i| i \leq q_j\})$ for $1 \leq j \leq k$.
The action of $GL_n$ on ordered bases of $\mathbb{C}^n$ induces an action on the set $\mathcal{F}\ell_Q$ of $Q$-flags.
Let $P$ be the ``parabolic" subgroup of $GL_n$ that stabilizes $E$.
Note that $B$ stabilizes $E$, so $B \subseteq P$.
Let $(v_1, \dots, v_n)$ be an ordered basis and let $f:=[v_1, \dots, v_n] \in GL_n$ be the corresponding invertible matrix.
Let $F$ be any $Q$-flag.
If $\Phi_Q(f)=F$, then $f.E=F$.
So Lemma~\ref{bijection} implies that the action of $GL_n$ on $\mathcal{F}\ell_Q$ is transitive.
From the definition of $P$, we see that $\mathcal{F}\ell_Q$ is isomorphic to the coset space $GL_n/P$ as a $GL_n$-set.
The three operations from the proof of Lemma~\ref{bijection} generate the right action of $P$ on $GL_n$ when $GL_n$ is considered as the set of all ordered bases for $\mathbb{C}^n$.
That lemma found a preferred representative in $GL_n$ for each coset in $GL_n/P$.
So the map $\Phi_Q$ can be used to describe an isomorphism from $GL_n/P$ to $\mathcal{F}\ell_Q$.

Let $\mathfrak{g}, \mathfrak{h}, \mathfrak{b}$, and $\mathfrak{p}$ denote the Lie algebras of $GL_n, H, B$, and $P$ respectively.
The Lie algebra $\mathfrak{g}$ is reductive.
Let $\phi_1, \dots, \phi_n$ denote the basis of $\mathfrak{h}^*$ such that $\phi_i(h)$ is the entry of $h$ in position $(i,i)$ for any $h \in \mathfrak{h}$.
Equip $\mathfrak{h}^*$ with the inner product for which $\phi_1, \dots, \phi_n$ is an orthonormal basis.
For each $1 \leq j \leq n-1$ set $\alpha_j:=\phi_j-\phi_{j+1}$ and $\omega_j:= \sum_{i=1}^{j} \phi_i$.
For the semisimple part of $\mathfrak{g}$, the $\alpha_1, \dots, \alpha_{n-1}$ depict the positive simple roots and the $\omega_1, \dots, \omega_{n-1}$ depict the fundamental weights.
Set $\omega_n:= \sum_{i=1}^n \phi_i$.
This weight is orthogonal to $\alpha_1, \dots, \alpha_{n-1}$; it corresponds to the center of $\mathfrak{g}$.
Set $J:=[n-1] \setminus Q$.
It can be seen that $\mathfrak{p}$ is the direct sum of $\mathfrak{b}$ and the root subspaces for the roots in the negative span of $\{ \alpha_j | j \in J \}$.
For each weight $\mu \in \mathfrak{h}^*$, there is a corresponding character $exp(\mu)$ of the torus $H$.
For $1 \leq i \leq n$ set $x_i:=exp(\phi_i)$.
Let $\lambda$ be an $n$-partition.
For $1\leq i \leq n$, set $a_i:=\lambda_i - \lambda_{i+1}$; this is the number of columns of length $i$ in the shape of $\lambda$.
Then we have $Q(\lambda)=\{1 \leq i \leq n-1 | a_i \neq 0\}$.
Use $\lambda$ to also denote the weight $\sum_{i=1}^n \lambda_i \phi_i = \sum_{i=1}^n a_i \omega_i$.
Let $V_\lambda$ denote an irreducible representation of $GL_n$ with highest weight $\lambda$.

The contragredient representation of $H$ on $\Gamma_\lambda$ defined in Section~\ref{CHAR} extends to a representation of $GL_n$:
Since a monomial $\tau$ of a tableau $T$ is a product of minors, it can be seen that $g.\tau$ is again a polynomial in $\Gamma_\lambda$.
Here $\tau$ is a weight vector of $\Gamma_\lambda$ of weight $\sum_{i=1}^n -c_i \phi_i$.

Now fix an $n$-partition $\lambda$ such that $Q(\lambda)\subseteq Q$.
Let $\epsilon$ be the minimal $Q$-chain of subsets $E_j:= \{1 , \dots, q_j\}$ for $1\leq j \leq k$.
Notate the $\lambda$-key monomial $\psi_\lambda(\epsilon)$ with $\psi$.
It can be seen that for all ordered bases $f$ and any $p \in P$, we have $\psi(fp )=\theta_\lambda(p) \psi(f)$ for a certain scalar $\theta_\lambda(p)$.
Since the function $\theta_\lambda$ on $P$ is multiplicative, it defines a character of $P$ that is realized in $GL(\mathbb{C}\psi)$.
Define an equivalence relation $\sim$ on $GL_n \times \mathbb{C}\psi$ by setting $(g, z) \sim (gp,\theta_\lambda(p) z \psi)$ for any $g \in GL_n$ and $p \in P$ and $z \in \mathbb{C}$.
Define a line bundle $\mathcal{L}_\lambda$ on $GL_n / P$ to be $(GL_n \times \mathbb{C}\psi)/ \sim$.
There is a contragredient representation of $GL_n$ on its space of global sections $\Gamma(GL_n/P, \mathcal{L}_\lambda)$:
For $\xi \in \Gamma(GL_n/P, \mathcal{L}_\lambda),$ a matrix $g \in GL_n,$ and coset $f \in GL_n/P$, we define $g.\xi(f):=\xi(g^{-1}f)$.
The Borel-Weil theorem says  \cite[Section 4]{Borel} that this representation is irreducible with lowest weight $-\lambda$.

For the monomial $\tau$ of any tabloid $T$, we more generally have $\tau (fp) =  \theta_\lambda(p) \tau(f)$ for any $f \in GL_n$ and $p \in P$.
This is because the right multiplication of ordered bases by $p$ is generated by the column operations of Lemma~\ref{bijection}, while the filling of $T$ specifies the rows used to form minors for $\tau$.
So $\tau$ can be used to define a section of $\mathcal{L}_\lambda$ that sends the coset $fP$ of $GL_n/P$ to the equivalence class $[f, \tau(f)\psi]$ of $(GL_n \times \mathbb{C}\psi)/ \sim$.
Hence $\Gamma_\lambda$ can be viewed as a  submodule of the global section space $\Gamma(GL_n/P, \mathcal{L}_\lambda)$ of this bundle.
Since $\Gamma(GL_n/P, \mathcal{L}_\lambda)$ is irreducible, this entire space is realized by $\Gamma_\lambda$.
It can bee seen that the section defined by $\psi$ is a lowest weight vector of $\Gamma(GL_n/P,\mathcal{L}_\lambda)$ for the lowest weight $-\lambda$.
Here Theorem~\ref{tableaux} says that the (semistandard) tableau monomials describe a basis for $\Gamma(GL_n/P, \mathcal{L}_\lambda)$.
Moreover, this basis is a weight basis.
Since the lowest weight of $\Gamma(GL_n/P, \mathcal{L}_\lambda)$ is $-\lambda$, the highest weight of $\Gamma(GL_n/P, \mathcal{L}_\lambda)^*$ is $\lambda$.
Hence $\Gamma_\lambda^* \cong V_\lambda$.
Since we have allowed $\lambda$ to have columns of length $n$, we can have positive powers of the determinant in our characters.
Hence each of the irreducible polynomial representations of $GL_n$ can be realized with some $\Gamma_\lambda^*$ (for all $Q$).
If suitable notation were introduced, our treatment could also handle negative powers of the determinant.
Then each of the irreducible rational representations of $GL_n$ could be realized with some $\Gamma_\lambda^*$.

The Weyl group $W$ of the semisimple part of $\mathfrak{g}$ is generated by the simple reflections $s_1, \dots, s_{n-1}$ corresponding to the simple roots.
Using the depiction of the simple roots in $\mathfrak{h}^*$ above, we can depict the action of a simple reflection on the $\phi$ basis as follows:
For $1 \leq i \leq n-1$ we have $s_i.\phi_i=\phi_{i+1}$ and $s_i.\phi_{i+1}=\phi_i$, with $s_i.\phi_j=\phi_j$ for all other $j$.
By considering only the subscripts here, we can model the action of $W$ with the group $S_n$ of permutations of $[n]$.
Corresponding to the simple reflection $s_i$, the transposition $(i,i+1)$ swaps the values $i$ and $i+1$ in an  $n$-sequence of values from $[n]$.
Given a permutation $\pi \in S_n$, write the result of $\pi.(1,2, \dots, n)$ in one-row form as $(\pi_1, \pi_2, \dots, \pi_n)$.
Then $\pi$ models the element $w \in W$ such that $w.\phi_i = \phi_{\pi_i}$ for $1 \leq i \leq n$.

The length of an element $w \in W$ is the smallest number of simple reflections needed to express $w$.
Let $W_J$ denote the subgroup of $W$ generated by the reflections $s_j$ for $j \in J$.
Since $Q(\lambda)\subseteq Q$, it can be seen that the group $W_J$ stabilizes $\lambda$.
Each coset of $W/W_J$ has a unique minimal length representative.
Let $W^J \subseteq W$ denote the set of such representatives.
It can be seen that each $Q$-permutation (Section~\ref{SETUP}) models some $w \in W^J$ and any $w\in W^J$ is correspondingly modeled by some $Q$-permutation.
So the map sending a $Q$-chain $\pi$ to the $Q$-permutation $\overline \pi$ can be viewed as a bijection from the set of $Q$-chains to $W^J$.
Under this bijection, our partial order $\preceq$ on $Q$-chains agrees \cite[Theorem 2.6.3]{BB} with the Bruhat order on $W$ restricted to $W^J$.
The Weyl group can also be depicted in $GL_n$ relative to $H$ as the group of $n \times n$ permutation matrices.
Here the $Q$-chain $\pi$ is represented by the matrix $s_\pi$ from Section~\ref{DEFS}.

Given $w \in W$, let $v_{w\lambda}$ be a weight vector in $V_\lambda$ of weight $w\lambda$.
Let $D_\lambda(w)$ denote the Demazure $B$-module $\mathbb{C}[B].v_{w\lambda}$.
Since $W_J$ stabilizes $\lambda$, the module $D_\lambda(w)$ only depends on the coset of $w$ in $W/W_J$.
So we can name this Demazure module $D_\lambda(\pi)$, where $\pi$ is the $Q$-chain corresponding to the representative of this coset in $W^J$. 

Using $Q$-preferred bases, it is can be seen that the flags $\varphi(\pi)$ for $Q$-chains $\pi$ are exactly the $H$-invariant $Q$-flags.
The Bruhat cells are the $B$-orbits of $GL_n/P$.
Corollary~\ref{closure} can be strengthened as follows:  
Given a $Q$-chain $\pi$, the Schubert variety $X(\pi)$ is the closure of the Bruhat cell $C(\pi)$ in the Zariski topology on $GL_n/P$.
This is proved over any algebraically closed field in e.g. \cite[Section 10.6]{Procesi}, but since $GL_n$ is ``split" that proof works here over any field \cite{Humph}.
If one accepts this substitute for Corollary~\ref{closure}, then every result in this paper other than Proposition~\ref{path} is valid over any field.

Now fix a $Q$-chain $\pi$.
Let $\mathcal{L}_\lambda (\pi) $ denote the restriction of $\mathcal{L}_\lambda$ to the Schubert variety $X(\pi)$.
The global section space $\Gamma(X(\pi), \mathcal{L}_\lambda(\pi))$ of this bundle is not a $GL_n$-module since $X(\pi)$ is not $GL_n$-invariant in $GL_n/P$.
But $X(\pi)$ is $B$-invariant, and so the restriction of the $GL_n$ representation on $\Gamma(GL_n/P, \mathcal{L}_\lambda)$ to the subgroup $B$ induces a representation of $B$ on $\Gamma(X(\pi), \mathcal{L}_\lambda(\pi))$.
It is known \cite{Demazure} that its dual is isomorphic to the Demazure module defined above: $\Gamma(X(\pi), \mathcal{L}_\lambda(\pi)) ^* \cong D_\lambda(\pi)$.
The section defined by the monomial $\psi_\lambda(\epsilon)$ is again a lowest weight vector of $\Gamma(X(\pi), \mathcal{L}_\lambda(\pi))$ for the lowest weight $-\lambda$.
The section defined by the monomial $\psi_\lambda(\pi)$ is a highest weight vector of  $\Gamma(X(\pi), \mathcal{L}_\lambda(\pi))$ for the highest weight $-w\lambda$, where $w \in W$ is modeled by the $Q$-permutation $\overline \pi$.
Analagously, the vector space $\Gamma_\lambda(\pi):=\Gamma_\lambda / Z_\lambda(\pi)$ is not a $GL_n$-module since $Z_\lambda(\pi)$ is not $GL_n$-invariant in $\Gamma_\lambda$.
But one can see that the action of $B$ on a tabloid monomial produces a combination of monomials for tabloids with larger values.
Then $Z_\lambda(\pi)$ is $B$-invariant, and so the restriction of the $GL_n$ representation on $\Gamma_\lambda$ to the subgroup $B$ induces a representation of $B$ on $\Gamma_\lambda(\pi)$.
Fact~\ref{zero_set} and the isomorphism $\Gamma_\lambda \cong \Gamma(GL_n/P, \mathcal{L}_\lambda)$ above imply that these $B$-modules $\Gamma_\lambda(\pi)$ and $\Gamma(X(\pi), \mathcal{L}_\lambda(\pi))$ are isomorphic.
Here Theorem~\ref{main} says that the $\pi$-Demazure monomials describe a basis for $\Gamma(X(\pi), \mathcal{L}_\lambda(\pi))$.
Moreover, this basis is a weight basis.
By this isomorphism, we have $\Gamma_\lambda(\pi)^* \cong D_\lambda(\pi)$.
Corollary~\ref{character} gives the character of $D_\lambda(\pi)$ as the Demazure polynomial $\sum x^T$, where the sum runs over all $\pi$-Demazure tableaux of shape $\lambda$.
This implies that the dimension of $D_\lambda(\pi)$ is the number of $\pi$-Demazure tableaux of shape $\lambda$.
See the appendix of \cite{PW} for more information concerning the concrete description of the coordinatized Demazure modules of $B\subset GL_n$. 

\section*{Acknowledgements} 
Special thanks to my advisor Bob Proctor for his guidance concerning the exposition of this article.
Thanks also to Shrawan Kumar for sharing his expertise on algebraic groups, to Matt Willis for his scanning tableaux and feedback on the first draft of this paper, and to Joe Seaborn for helpful discussions.
The author benefitted from the support of the Tom Brylawski Memorial Fellowship from the University of North Carolina Graduate School.  

\bibliography{Bibliography}

\end{document}